\tikzset{
    >=stealth',
    punkt/.style={
           rectangle,
           rounded corners,
           draw=black, very thick,
           text width=6.5em,
           minimum height=2em,
           text centered},
    pil/.style={
           ->,
           thick,
           shorten <=2pt,
           shorten >=2pt,}
}
\numberwithin{equation}{section}
\newtheorem{theorem}{Theorem}[section]
\newtheorem{lemma}[theorem]{Lemma}
\newtheorem{corollary}[theorem]{Corollary}
\newtheorem{proposition}[theorem]{Proposition}
\theoremstyle{definition}
\theoremstyle{remark}
\newtheorem{remark}[theorem]{Remark}
\newcommand{\p}{{\partial}}
\newcommand{\nab}{\nabla}
\newcommand{\dive}{{\ensuremath\mathop{\mathrm{div}\,}}}
\newcommand{\pol}{\EuScript{P}}
\newcommand{\bpol}{\boldsymbol{\pol}}
\newcommand{\bld}[1]{\boldsymbol{#1}}
\newcommand{\bt}{\bld{t}}
\newcommand{\bI}{\bld{I}}
\newcommand{\bv}{\bld{v}}
\newcommand{\bw}{\bld{w}}
\newcommand{\bn}{\bld{n}}
\newcommand{\bu}{\bld{u}}
\newcommand{\bE}{\bld{E}}
\newcommand{\bW}{\bld{W}}
\newcommand{\bV}{\bld{V}}
\newcommand{\bH}{\bld{H}}
\newcommand{\bL}{\bld{L}}
\newcommand{\bphi}{\bm \phi}
\newcommand{\bX}{{\bm X}}
\newcommand{\bff}{{\bm f}}
\newcommand{\bpsi}{\bm \psi}
\newcommand{\bbR}{\mathbb{R}}
\newcommand{\calT}{\mathcal{T}}
\newcommand{\calN}{\mathcal{N}}
\newcommand{\bPsi}{{\bm \Psi}}
\newcommand{\calE}{\mathcal{E}}
\title[General degree isoparametric SV]{General degree divergence-free finite element methods for the Stokes problem on smooth domains}
\author[R. Durst]{Rebecca Durst}
\address{University of Pittsburgh, Department of Mathematics}
\email{RFD17@pitt.edu}
\author[M. Neilan]{Michael Neilan}
\address{University of Pittsburgh, Department of Mathematics}\thanks{The
second author was supported in part by the NSF, grant DMS-2309425.}
\thanks{All data generated or analysed during this study are included in this article.}
\email{neilan@pitt.edu}
\begin{document}

\maketitle

\begin{abstract}
In this paper, we construct and analyze 
divergence-free finite element methods
for the Stokes problem on smooth domains.
The discrete spaces are based on the Scott-Vogelius
finite element pair of arbitrary polynomial degree
greater than two. By combining the Piola transform
with the classical isoparametric framework,
and with a judicious choice of degrees of freedom,
we prove that the method converges with optimal order
in the energy norm. We also show that the discrete
velocity error converges with optimal order in the $L^2$-norm.
Numerical experiments are presented, which support the theoretical results.
\end{abstract}

\section{Introduction}

\thispagestyle{empty}

Divergence-free methods for the Stokes problem have grown in popularity due to the various advantages they present. This includes pressure-robustness, which allows the errors of the pressure and velocity to be decoupled so that the scheme is well-suited to systems in which the pressure term in the Stokes problem is dominant (i.e. systems with a large pressure gradient or small viscosity). Other advantages include mass-conservation and parameter robustness. Consequently, these methods have become an active area of research (see, e.g., \cite{scott1985norm,guzman2018inf,cockburn2007note,falk2013stokes,john2017divergence}). However, most work on these methods is focused on polyhedral domains. The extension to smooth domains (with optimal-order convergence) is non-trivial and only recently have various approaches been proposed \cite{NeilanOtus21,LiuNeilanMaxim23,LiuNeilanOtus}. 

In this paper, we propose an arbitrary degree, divergence-free, isoparametric finite element scheme based
on the Scott-Vogelius pair \cite{scott1985norm}.
On polygonal domains, this approach approximates the velocity 
with continuous, piecewise polynomials of degree $k$,
and approximates the pressure with discontinuous polynomials
of degree $(k-1)$.  It is well known that the stability of this pair
depends on both the triangulation and the polynomial degree $k$. 
We will work on Clough-Tocher splits which
yield a stable element pair provided $k\ge 2$. This is a commonly used method allowing greater flexibility with respect to polynomial degree.

In our approach, we combine this Scott-Vogelius pair with an isoparametric paradigm. To do so,
we apply $k$-degree polynomial diffeomorphisms to define the curvilinear triangulation
and the finite element spaces.  While this approach is classical for isoparametric elements (see \cite{brenner2008mathematical,scott1973finite}),
its extension to divergence-free methods is non-standard and a direct application
of this approach fails to lead to divergence-free and pressure-robust schemes.
In particular, using classical isoparametric Lagrange finite element spaces
for velocity approximations disrupts the divergence-free and pressure-robust properties of the scheme.
Instead, we employ the divergence-preserving Piola transform in the definition 
of the discrete velocity space. This transform is defined
on the macro (unrefined) triangulation, and we treat the resulting finite element spaces
as macro elements defined on the unrefined triangulation.

The primary challenge in this approach lies in the fact that the Piola transform pollutes the continuity of functions in the Lagrange finite element space. More specifically, when the functions in the discrete velocity space are defined by the Piola transform, 
only normal continuity across interior edges is guaranteed.
 Thus, the resulting space is only $H(div)$-conforming. Nonetheless, the spaces are designed to have weak continuity properties that are  leveraged to ensure consistency and stability so that no additional terms in the bilinear form (e.g., penalty terms) are required in the method. 

Consequently, one of the main contributions of this paper is to design a finite element space
that combines the Lagrange finite element space with the Piola transform
and possesses sufficient weak continuity properties
across interior edges. We achieve such a space via a judicious choice of edge degrees of freedom; specifically, these are taken as the Gauss-Lobatto points of interior edges.
This construction allows us to derive a general estimate
of the jumps of discrete velocity functions across interior edges (cf.~Lemma \ref{jumpBound}).

This work is an extension of \cite{NeilanOtus21} where the lowest-order case $k=2$
was considered. As expected, some of the results in \cite{NeilanOtus21} extend to the general case, such as scaling arguments and inf-sup stability. However, the weak continuity properties of the discrete velocity space
is subtle, and a naive extension of \cite{NeilanOtus21} to arbitrary polynomial
degree does not necessarily lead to an optimal-order convergent method. Another contribution is $L^2$ error estimates. Again, this requires new estimates of the discrete velocity functions
across interior edges.

The organization of the paper is as follows. In the next section, we introduce notation, state the properties of the polynomial diffeomorphism, describe the domain discretization, and introduce the Piola transform. We also establish some necessary preliminary results that 
are later used in the convergence analysis. In Section \ref{locSpace}, we define the local finite element spaces and the degrees of freedom and introduce the global spaces in Section \ref{gloSpace}. Also in Section \ref{gloSpace}, we discuss the weak continuity properties of the function spaces and show that the method is inf-sup stable. In Section \ref{stokesFEM}, we introduce the finite element method and derive optimal-order $H^1$ and $L^2$ error estimates for the velocity and pressure solutions, respectively. Then, in Section \ref{l2est}, we prove optimal-order convergence in $L^2$ for the discrete velocity solution, and in Section \ref{numerics}, we provide numerical experiments to verify our theoretical results. Finally, some auxiliary results are proved in Appendices \ref{Ap:ProofInI} and \ref{Ap:ProofEh}.

\section{Preliminaries}

Let $\Omega \subset \mathbb{R}^2$ be an open, bounded, 
and sufficiently smooth domain with boundary $\partial \Omega$. 
We then construct a mesh following the divergence-free isoparametric method outlined in \cite{NeilanOtus21}.

\subsection{Isoparametric framework}
We begin with a shape regular, affine (simplicial) triangulation $\tilde\calT_h$, with sufficiently small mesh size 
$h=\max_{\tilde{T}\in\tilde\calT_h} {\rm diam}(\tilde{T})$. 
Furthermore, we assume that the boundary vertices lie on $\partial \Omega$, that 
$\tilde{\Omega}_h := {\rm{int}}\big(\cup_{\tilde{T}\in \tilde{\mathcal{T}}_h} \bar{\tilde{T}}\big)$ 
is an $\mathcal{O}(h^2)$ polygonal approximation of $\Omega$, and each $\tilde{T}\in \tilde{\mathcal{T}}_h$ has at most two boundary vertices. 

Next, we let $G: \tilde{\Omega}_h \to \Omega$ be a bijective map between the domain and the mesh with $\|G\|_{W^{1,\infty}(\tilde{\Omega}_h)} \leq C$. Here and throughout the paper, $C$ denotes
a generic positive constant that is independent of any mesh parameter and may take on different values
at each occurrence. We define $G$ such that $G\vert_{\tilde{T}}(x) = x$ at all vertices of $\tilde{T}$. Furthermore, we assume that $G$ is the identity map on interior edges, i.e., edges containing at most one vertex on the boundary.

From here, we define a mesh with curved boundaries following a standard isoparametric framework (see e.g. \cite{brenner2008mathematical,bernardi89,Lenoir86,CiarletRaviart72}). In particular, we define $G_h$ to be the piecewise polynomial nodal interpolant of $G$ of degree $\leq k$ ($k\geq 2$), with $\|G_h\|_{W^{1,\infty}(\tilde{T})} \leq C$ and $\|G_h^{-1}\|_{W^{1,\infty}(\tilde{T})} \leq C$ for all $\tilde{T}\in \tilde{\mathcal{T}}_h$. 
Then, the isoparametric triangulation and computational domain are given by
\begin{equation*}
    \mathcal{T}_h := \{G_h(\tilde{T}) : \  \tilde{T}\in\tilde{\mathcal{T}}_h\}, \quad \Omega_h := {\rm{int}}\big(\cup_{T\in\mathcal{T}_h}\bar{T}\big).
\end{equation*}
In particular, $\Omega_h$ is an $O(h^{k+1})$ approximation to $\Omega$.
We denote by $\|\cdot\|_{H^m_h(\Omega_h)}$ the piecewise norm with respect to $\calT_h$, i.e.,
\[
\|q\|_{H^m_h(\Omega_h)}^2 = \sum_{T\in \calT_h} \|q\|^2_{H^m(T)}.
\]
We also denote by $\nab_h$ the piecewise gradient operator with respect to $\calT_h$, so
that $\nab_h q|_T = \nab (q|_T)$ for all $T\in \calT_h$.

\subsection{The mappings $F_{\tilde{T}}$ and $F_T$}
To define the finite element spaces, we must first construct mappings to and from the affine and curved triangulations, $\tilde{\mathcal{T}}_h$ and $\mathcal{T}_h$. To do so, we define $\hat{T}$ to be the reference triangle with vertices $(1,0)$, $(0,1)$, and $(0,0)$. Then for each $\tilde T\in \tilde \calT_h$, we
let $F_{\tilde{T}}: \hat{T} \to \tilde{T}$ be an affine bijection with $\vert F_{\tilde{T}}\vert_{W^{1,\infty}(\hat{T})}\leq C h_T$ 
and $\vert F_{\tilde{T}}^{-1}\vert_{W^{1,\infty}(\tilde{T})}\leq C h_{\tilde T}^{-1}$
for $h_{\tilde T} ={ \rm{diam}}(\tilde{T})$. 
Subsequently, we may define $F_T: \hat{T} \to T$ by 
$F_T = G_h \circ F_{\tilde{T}}$. For each $T\in \calT_h$, 
the polynomial diffeomorphism $F_T$ and its inverse
and assumed to satisfy the following estimates:
\begin{equation}\label{eqn:FTBounds}
\begin{split}
    |F_T|_{W^{m,\infty}(\hat T)}&\le C h_T^m\ \ (0\le m\le k),\qquad |F_T^{-1}|_{W^{m,\infty}}\le C h_T^{-m}\ \ (0\le m\le (k+1)),\\
    c_1 h_T^2 &\le \det(D F_T)\le c_2 h_T^2.
    \end{split}
\end{equation}
Here, we have $h_T = {\rm{diam}}(G_h^{-1}(T))$, and $c_1,c_2$ are  generic constants independent of $h_T$. 
Furthermore, we note that, due to the assumptions on $G$, the mappings $F_T$ and $F_{\tilde{T}}$ are oriented so that they match at the vertices of $\hat{T}$. Consequently, the mappings are the same on triangles with three interior edges, so that for all such triangles $T \in \mathcal{T}_h$ we have $T = G_h(\tilde{T}) = \tilde{T}$.

\subsection{The boundary regions of $\Omega$ and $\Omega_h$}
With the isoparametric triangulation established, we 
define $\Omega \Delta \Omega_h = (\Omega \setminus \Omega_h) \cup (\Omega_h \setminus \Omega)$ 
and note it may be shown that (see e.g. \cite[Equation 3.9]{BrennerNeilanSung13} for proof)
\begin{equation}\label{eqn:GeoError}
|\Omega \Delta \Omega_h| \leq C h^{k+1}.
\end{equation}

Next, by the construction of $\Omega \Delta \Omega_h$, we have a bound of
the $H^1$ semi-norm in this boundary region.

\begin{lemma}\label{domainEdges}
Let $\bv \in \bH^2(\Omega)$ be extended into $\mathbb{R}^2$
in a way such that $\|\bv\|_{H^2(\bbR^2)}\le C \|\bv\|_{H^2(\Omega)}$.
Then for $h$ sufficiently small,
\begin{equation*}
\|\nabla \bv \|_{L^2(\Omega \triangle \Omega_h)}\leq C h^{\frac{k+1}{2}} \|\bv\|_{H^2(\Omega)}.
\end{equation*}
\end{lemma}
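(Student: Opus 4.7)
The plan is to exploit two facts: first, that $\Omega\triangle\Omega_h$ lies inside a tubular neighborhood $U_\delta:=\{x\in\bbR^2:\mathrm{dist}(x,\partial\Omega)<\delta\}$ with $\delta = C h^{k+1}$ (a consequence of the $O(h^{k+1})$ approximation property of $\Omega_h$ to $\Omega$); second, that each component of $\nabla\bv$ belongs to $H^1(\bbR^2)$ thanks to the extension hypothesis, so it has a well-defined trace on $\partial\Omega$ controlled by $\|\bv\|_{H^2(\Omega)}$. A naive Hölder estimate using $|\Omega\triangle\Omega_h|\le C h^{k+1}$ combined with a Sobolev embedding $H^1\hookrightarrow L^p$ only yields $h^{(k+1)/2\cdot(1-2/p)}$, which is strictly weaker than what we need; hence a genuine trace/tubular-neighborhood argument is required.

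First I would fix $h$ small enough so that the smoothness of $\partial\Omega$ guarantees that the normal exponential map $(y,s)\mapsto y + s\,\bn(y)$, for $y\in\partial\Omega$ and $|s|\le \delta$, is a $C^1$ diffeomorphism onto $U_\delta$ with Jacobian bounded above and below by constants independent of $h$. For any $\phi\in H^1(\bbR^2)$, the fundamental theorem of calculus in the normal direction gives
\begin{equation*}
\phi\bigl(y+s\bn(y)\bigr)^2 \le 2\,\phi(y)^2 + 2s\int_0^s \bigl|\partial_\bn\phi\bigl(y+r\bn(y)\bigr)\bigr|^2\,dr,
\end{equation*}
and integrating over $|s|\le \delta$ and $y\in\partial\Omega$, together with the bounded Jacobian, produces
\begin{equation*}
\|\phi\|_{L^2(U_\delta)}^2 \le C\,\delta\,\|\phi\|_{L^2(\partial\Omega)}^2 + C\,\delta^2\,\|\nabla\phi\|_{L^2(U_\delta)}^2.
\end{equation*}

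Next I would apply this inequality componentwise to $\phi = \partial_i v_j$, using the standard trace theorem $\|\nabla\bv\|_{L^2(\partial\Omega)}\le C\|\bv\|_{H^2(\Omega)}$ (valid since $\partial\Omega$ is smooth and $\bv$ is extended to $\bbR^2$ with controlled $H^2$ norm) to handle the boundary term, and $\|\nabla^2\bv\|_{L^2(\bbR^2)}\le C\|\bv\|_{H^2(\Omega)}$ to handle the second term. This yields
\begin{equation*}
\|\nabla\bv\|_{L^2(U_\delta)}^2 \le C\bigl(\delta + \delta^2\bigr)\,\|\bv\|_{H^2(\Omega)}^2.
\end{equation*}
Finally, since $\Omega\triangle\Omega_h\subset U_\delta$ with $\delta = C h^{k+1}$ (which follows from the fact that $\partial\Omega_h$ is an $O(h^{k+1})$ perturbation of $\partial\Omega$, the same estimate underlying \eqref{eqn:GeoError}), for $h$ small the $\delta^2$ term is absorbed into the $\delta$ term, and taking square roots gives the advertised bound.

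The main technical obstacle is justifying that $\Omega\triangle\Omega_h$ genuinely sits in a tubular strip of width $C h^{k+1}$ — this requires invoking the same geometric approximation property that underlies \eqref{eqn:GeoError}, together with the $W^{1,\infty}$ bounds on $G_h$ from the isoparametric construction, to guarantee that every point of $\partial\Omega_h$ lies within distance $C h^{k+1}$ of $\partial\Omega$. Once this is in place, the trace/normal-coordinate machinery is standard and the rest of the argument is routine.
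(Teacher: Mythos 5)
Your proposal is correct and follows essentially the same route as the paper: both arguments place $\Omega\triangle\Omega_h$ inside a tubular band of width $O(h^{k+1})$ about $\partial\Omega$ and then apply the narrow-band estimate $\|w\|_{L^2(\text{band})}\le C\delta^{1/2}\|w\|_{H^1}$ to $w=\nabla\bv$. The only difference is that the paper cites this estimate from Elliott--Ranner (Lemma 4.10 there), whereas you rederive it via normal coordinates and the fundamental theorem of calculus, which is a valid self-contained substitute.
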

\begin{proof}
Let $d$ be the signed distance function
of $\Omega$ with the convention $d(x)<0$ for $x\in \Omega$.
For $\delta>0$, define
\[
U_{\delta} := \{x\in \bbR^2:\ d(x)<\delta\},
\]
and note that, because $\p \Omega$ is $C^2$,  there holds $\p U_{\delta} \in C^2$
for $\delta>0$ sufficiently small.
We then set
\begin{align*}
    \mathcal{N}_{\delta} := \{x \in U :   |d(x)| < \delta\}
\end{align*}
to be the tubular region around $\p U_\delta$.
    By \cite[Lemma 4.10]{ElliottRanner13}, there holds for all $w \in H^1(U_\delta)$:
    \begin{equation*}
        \|w\|_{L^2(\calN_\delta)}\le C \delta^{1/2} \|w\|_{H^1(U_\delta)}.
    \end{equation*}

Now set $\delta_h = 2 {\rm dist}\{\p\Omega_h,\p\Omega\}=\mathcal{O}(h^{k+1})$,
so that $\Omega \triangle \Omega_h \subset \calN_{\delta_h}$,
and assume $h$ is sufficiently small such that $\p U_{\delta_h}\in C^2$.
We then have 
\begin{align*}
    \|\nab \bv\|_{L^2(\Omega\triangle \Omega_h)}\le C \|\nab \bv\|_{L^2(\calN_{\delta_h})}\le C \delta_h^{1/2} \|\nab \bv\|_{H^1(U_{\delta_h)}}\le Ch^{\frac{k+1}2}\|\bv\|_{H^2(\Omega)}.
\end{align*}
\end{proof}

\subsection{Clough-Tocher Split}
To guarantee inf-sup stability of the proposed divergence-free method, we introduce
on each element a local triangulation given by the Clough-Tocher split. 
Let $\hat{T}^{ct} = \{\hat{K}_i\}_{i=1}^3$ be the Clough-Tocher triangulation of the reference triangle, obtained
by connecting the vertices of $\hat T$ to its barycenter.  We define analogous splits on our affine and curved triangulations via $F_{\tilde{T}}$ and $F_T$
(cf.~Figures \ref{fig:Fancy}--\ref{fig:Fancy2}): 
\begin{equation*}
    \tilde{T}^{ct} = \{F_{\tilde{T}} (\hat{K}) : \ \hat{K} \in \hat{T}^{ct}\}, \quad T^{ct} = \{F_T(\hat{K}) : \ \hat{K} \in \hat{T}^{ct}\}.
\end{equation*}
From \eqref{eqn:FTBounds} and the shape-regularity of $\tilde T_h$, it follows that $| T | \leq C | K |$ for all  $K \in T^{ct}$. 

\begin{remark} 
We note that for  the \textit{macroelement} $T \in \mathcal{T}_h$, only edges containing both vertices on $\partial \Omega_h$ may be curved. However, it may be that interior edges of the local triangulations $K \in T^{ct}$ may indeed be curved as well.
\end{remark}

\subsection{The Piola transform}
The final piece we need to construct the divergence-free method is the Piola transform. 
Given $T\in \calT_h$, we define the matrix $A_T:\hat T\to \bbR^{2\times 2}$ to be
the matrix arising in this transform
\begin{equation}\label{eqn:ATDEF}
A_T(\hat x) := \frac{DF_T(\hat x)}{\det(DF_T(\hat x))}.
\end{equation}
In what follows, the local function spaces on each $T\in \mathcal{T}_h$ will be constructed through $A_T$. 
Specifically, given a function $\hat \bv:\hat T\to \bbR^2$, its Piola transform yields
the function $\bv:T\to \bbR^2$ with $\bv= (A_T \hat \bv)\circ F_T^{-1}$ (cf.~Section \ref{locSpace}).
It is well-known that this transform is divergence-preserving and normal-continuity preserving,
and its use in the definition of the spaces given  
below allows us to maintain these properties 
of the Scott-Vogelius pair on curved triangulations. We emphasize that this transform
is defined with respect to $T\in \calT_h$, not with respect to the triangles in the Clough-Tocher split.



\subsection{Bounds and scaling results} 
The following results give bounds on the matrix $A_T$ and its
inverse. 
We refer to the appendix of \cite{NeilanOtus21} for a proof for 
the case $k=2$. The arguments given there generalize trivially
for $k\ge 2$, and therefore the proof of the following lemma is omitted.
\begin{lemma}\label{defAT}
For each $T\in \calT_h$, there holds
\begin{equation}\label{eqn:ATBounds}
    |A_T|_{W^{m,\infty}(\hat T)}\le C h_T^{m-1}\ (m\ge 0),\qquad |A_T^{-1}|_{W^{m,\infty}(\hat T)}\le \left\{
\begin{array}{ll}
    C h_T^{1+m} & 0\le m\le k-1,\\
    0 & k\le m.
    \end{array}\right.
\end{equation}
\end{lemma}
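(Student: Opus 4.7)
The plan is to handle the two estimates separately: in two dimensions $A_T^{-1}$ reduces to an algebraic rearrangement of entries of $DF_T$, while $A_T$ itself is a genuine rational function that requires Leibniz combined with a quotient/chain rule argument for $1/\det(DF_T)$.

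For the inverse bound, I would invoke the $2\times 2$ adjugate identity $B\,(\cof B)^T=\det(B)\,I$ to write $A_T^{-1}=\det(DF_T)(DF_T)^{-1}=(\cof DF_T)^T$. Each entry of $(\cof DF_T)^T$ is, up to a sign, a first-order partial derivative of $F_T$, and hence a polynomial of degree at most $k-1$ on $\hat T$. This makes both halves of the inverse estimate immediate: all derivatives of order $\ge k$ vanish identically, while for $0\le m\le k-1$ the bound $|A_T^{-1}|_{W^{m,\infty}(\hat T)}\le C\,|F_T|_{W^{m+1,\infty}(\hat T)}\le Ch_T^{m+1}$ is read off directly from \eqref{eqn:FTBounds}.

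For $A_T$, I would set $p:=\det(DF_T)$ and apply the Leibniz rule to $A_T=p^{-1}DF_T$, reducing to separate estimates on $D^j(p^{-1})$ and $D^{m-j+1}F_T$. The factor $D^{m-j+1}F_T$ is controlled by $Ch_T^{m-j+1}$ via \eqref{eqn:FTBounds} (and vanishes when $m-j+1>k$). For the scalar factor $D^j(p^{-1})$, I would first observe that $p$ is a quadratic combination of first-order partials of $F_T$, so $|D^\ell p|\le Ch_T^{\ell+2}$, and then apply a Faà di Bruno expansion of $1/p$ together with the lower bound $p\ge c_1 h_T^2$ from \eqref{eqn:FTBounds} to obtain $|D^j(p^{-1})|\le Ch_T^{j-2}$. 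Summing the Leibniz expansion term-by-term produces $|D^m A_T|\le C\sum_{j} h_T^{j-2}h_T^{m-j+1}=Ch_T^{m-1}$, as required; for $m\ge k$ only the terms with $j\ge m-k+1$ survive, but the per-term bound is unchanged.

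The main obstacle is the Faà di Bruno bookkeeping for $D^j(p^{-1})$: one must check that the scaling contribution of every partition $\pi$ of $\{1,\dots,j\}$, which has the form $p^{-1-|\pi|}\prod_{B\in\pi}|D^{|B|}p|$, collapses to a single $h_T^{j-2}$ independent of the partition structure (the two cancelling factors of $h_T^{2|\pi|}$ make this work). This is the only nontrivial piece; once it is in hand, every remaining step is a direct substitution into the estimates already provided by \eqref{eqn:FTBounds}.
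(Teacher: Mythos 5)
Your proof is correct. The paper itself omits the proof of this lemma, deferring to the appendix of the cited reference for the case $k=2$ and asserting that the argument generalizes; your route --- observing that $A_T^{-1}=(\cof DF_T)^T$ has polynomial entries of degree at most $k-1$ (which gives both the vanishing of derivatives of order $\ge k$ and the bound $Ch_T^{m+1}$ directly from \eqref{eqn:FTBounds}), and combining the Leibniz rule with a Fa\`a di Bruno expansion of $1/\det(DF_T)$ together with the lower bound $\det(DF_T)\ge c_1h_T^2$ for the $A_T$ estimate --- is exactly that standard argument, and your partition bookkeeping (each partition contributing $h_T^{-2-2|\pi|}\cdot h_T^{j+2|\pi|}=h_T^{j-2}$) is right.
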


Additionally, we will make use of the following scaling results from \cite{bernardi89}.
\begin{lemma}
\label{scalingBernardi}
Let $T\in \calT_h$ and $\bw\in \bW^{m,p}(T)$ with $m\ge 0$ and $p\in [1,\infty]$.
Let $\hat \bw\in \bW^{m,p}(\hat T)$ be the image of $\bw$ on $\hat T$
with $\hat \bw(\hat x) = \bw(x)$, $x = F_T(\hat x)$ and set $\hat K = F_T^{-1}(K)$
for each $K\in T^{ct}$.
Then for any $K\in T^{ct}$,
\begin{equation}\label{eqn:Bernardi}
\begin{split}
    |\bw|_{W^{m,p}(K)}&\le C h_T^{2/p-m} \sum_{r=0}^m h_T^{2(m-r)}|\hat \bw|_{W^{r,p}(\hat K)},\\
    |\hat \bw|_{W^{m,p}(\hat K)}&\le C h_T^{m-2/p} \sum_{r=0}^m |\bw|_{W^{r,p}(K)}.
\end{split}
\end{equation}
\end{lemma}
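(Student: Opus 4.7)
The plan is to derive both inequalities using the generalized chain rule (Faà di Bruno's formula) for compositions, combined with the derivative bounds on $F_T$ and $F_T^{-1}$ from \eqref{eqn:FTBounds} and the Jacobian estimate $c_1 h_T^2\le \det(DF_T)\le c_2 h_T^2$ applied via a change of variables $x = F_T(\hat x)$.

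I would first establish the second inequality by writing $\hat\bw = \bw\circ F_T$ and expanding $D^m \hat\bw(\hat x)$ as a finite sum of terms of the form $(D^\ell \bw)(F_T(\hat x))\cdot D^{r_1}F_T(\hat x)\otimes\cdots\otimes D^{r_\ell}F_T(\hat x)$, with $r_1+\cdots+r_\ell = m$, $r_i\ge 1$, and $1\le \ell\le m$. The $L^\infty$ bound $|D^{r_i}F_T|\le Ch_T^{r_i}$ furnishes the cumulative factor $h_T^m$, and the change of variables converts the remaining integral $\int_{\hat K}|(D^\ell \bw)(F_T(\hat x))|^p\,d\hat x$ into an integral over $K$, contributing $|\det DF_T|^{-1/p}\simeq h_T^{-2/p}$. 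Summing over $\ell$ yields the stated bound (the $r=0$ term is a gratuitous addition once the $\ell\ge 1$ terms are in place).

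The first inequality follows symmetrically by applying the chain rule to $\bw = \hat\bw\circ F_T^{-1}$. The expansion now involves products of derivatives of $F_T^{-1}$ totaling order $m$, each factor bounded by $Ch_T^{-r_i}$ (valid for $r_i\le k+1$ by \eqref{eqn:FTBounds}), producing a cumulative $h_T^{-m}$; the change of variables in turn contributes $|\det DF_T|^{1/p}\simeq h_T^{2/p}$. This yields $|\bw|_{W^{m,p}(K)}\le Ch_T^{2/p-m}\sum_{\ell=1}^m|\hat\bw|_{W^{\ell,p}(\hat K)}$, which is dominated by the right-hand side of the stated estimate since the additional factors $h_T^{2(m-r)}$ are bounded above by $1$ for $h_T$ sufficiently small.

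The main obstacle is the combinatorial bookkeeping of the Faà di Bruno expansion, which for general $m$ produces many terms; however, this affects only the constant $C$ and not the scaling in $h_T$, which is identical for every term. The regularity restrictions in \eqref{eqn:FTBounds} ($m\le k$ for $F_T$ and $m\le k+1$ for $F_T^{-1}$) ensure that all pointwise bounds are available for the orders needed; higher-order derivatives of $F_T$ simply vanish since $F_T$ is polynomial of degree $\le k$, so the argument extends beyond these thresholds without additional effort.
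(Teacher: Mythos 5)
The paper does not prove this lemma at all --- it is quoted from \cite{bernardi89} --- so the only question is whether your argument actually establishes the stated estimates. Your treatment of the second inequality is fine: Fa\`a di Bruno applied to $\hat\bw=\bw\circ F_T$, the bounds $|F_T|_{W^{r_i,\infty}(\hat T)}\le Ch_T^{r_i}$, and the Jacobian factor $h_T^{-2/p}$ from the change of variables give exactly $Ch_T^{m-2/p}\sum_{r=1}^m|\bw|_{W^{r,p}(K)}$, which is indeed majorized by the stated right-hand side.

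The first inequality is where there is a genuine gap. Your symmetric argument yields
$|\bw|_{W^{m,p}(K)}\le Ch_T^{2/p-m}\sum_{r=1}^m|\hat\bw|_{W^{r,p}(\hat K)}$, and you then assert this ``is dominated by'' the stated bound because $h_T^{2(m-r)}\le 1$. That implication runs backwards: precisely because $h_T^{2(m-r)}\le 1$, the stated right-hand side $Ch_T^{2/p-m}\sum_{r}h_T^{2(m-r)}|\hat\bw|_{W^{r,p}(\hat K)}$ is \emph{smaller} than what you derived, so your estimate does not imply it. The weighted form is a strictly stronger statement, and the weights are not decorative: they are exactly what makes the bound of the term $I$ in the proof of Lemma \ref{lem:InvI} (Appendix \ref{Ap:ProofInI}) close, where one needs the coefficient of $|A_T\hat\bv|_{W^{r,p}(\hat K)}$ to be $h_T^{2/p+\ell-2r}$ rather than $h_T^{2/p-\ell}$; with your version one would end up with terms like $h_T^{-\ell}\|\bv\|_{L^p(K)}$ and the inverse estimate \eqref{eqn:InvI2} would fail. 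Concretely, already for $m=2$ the cross term $D\hat\bw\cdot D^2F_T^{-1}$ must carry the coefficient $h_T^{2/p}$, which forces $\|D^2F_T^{-1}\|_{L^\infty}\le C$; the crude bound $|F_T^{-1}|_{W^{2,\infty}}\le Ch_T^{-2}$ from \eqref{eqn:FTBounds} gives only $h_T^{2/p-2}$ there. Obtaining the weights requires the finer structural fact that $F_T$ is a higher-order perturbation of an affine map (so that, e.g., $D^2F_T^{-1}$ is $O(1)$ rather than $O(h_T^{-2})$), which is the actual content of Bernardi's lemma and is not recoverable from \eqref{eqn:FTBounds} by the chain rule alone. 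A secondary point: your remark that derivatives beyond the assumed range ``simply vanish'' applies to the polynomial $F_T$ but not to $F_T^{-1}$, which is not a polynomial, so the first inequality for $m>k+1$ is also not covered by your argument.
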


In the results that follow, we let $\bn$ denote the outward unit normal of a given domain (understood from context),
 and set $\bt$ to be the unit tangent vector obtained by rotating $\bn$ 90 degrees counterclockwise.

\section{Local spaces}\label{locSpace}

The full derivation of the local spaces for the divergence-free isoparametric framework can be found in \cite{NeilanOtus21} for 
the case $k=2$. Below, we have the analogous results for general $k$.

To begin, we define the local function space on the reference triangle $\hat{T} \subset \mathbb{R}^2$, with Clough-Tocher triangulation $\hat{T}^{ct} = \{\hat{K}_1, \hat{K}_2, \hat{K}_3\}$. Without including boundary conditions, the polynomial spaces on the reference triangle are
\begin{alignat*}{1}
    \hat{\bV}_k =& \{\hat{\bv}\in \bH^1(\hat{T}) : \hat{\bv}\vert_{\hat{K}} \in {\bpol}_k(\hat{K}) \ \forall\hat{K}\in \hat{T}^{ct}\}, \\
    \hat{Q}_{k-1} =& \{\hat{q}\in L^2(\hat{T}) : \hat{q}\vert_{\hat{K}} \in \pol_{k-1}(\hat{K}) \ \forall \hat{K}\in \hat{T}^{ct}\},
\end{alignat*}
where $\pol_k(S)$ is the space of scalar polynomials of degree $\leq k$ on domain $S$, and $\bpol_k(S) = [\pol_k(S)]^2$.

With $\tilde{x} = F_{\tilde{T}}(\hat{x})$, we define the local spaces on the affine triangle $\tilde{T} \in \tilde{\mathcal{T}}_h$ via composition:
\begin{alignat*}{1}
    \tilde{\bV}_k(\tilde{T}) =& \{\tilde{\bv}\in \bH^1(\tilde{T}) : \tilde{\bv}(\tilde{x}) = \hat{\bv}(\hat{x}), \ \exists \hat{\bv}\in \hat{\bV}_k \}, \\
    \tilde{Q}_{k-1}(\tilde{T}) =& \{\tilde{q}\in L^2(\tilde{T}) : \tilde{q}(\tilde{x}) = \hat{q}(\hat{x}), \ \exists \hat{q}\in \hat{Q}_{k-1} \}.
\end{alignat*}

To incorporate boundary conditions, we further define
\begin{alignat*}{2}
&\hat{\bV}_{k,0} = \hat{\bV}_k \cap \bH_0^1(\hat{T}), \quad  &&\hat{Q}_{k-1,0} = \hat{Q} \cap L_0^2(\hat{T}),\\
&\tilde{\bV}_{k,0}(\tilde{T}) = \tilde{\bV}_k(\tilde{T}) \cap \bH_0^1(\tilde{T}), \quad &&\tilde{Q}_{k-1,0}(\tilde{T}) = \tilde{Q}(\tilde{T}) \cap L_0^2(\tilde{T}),
\end{alignat*}
where $L_0^2(\tilde T)$ is the space of $L^2(\tilde T)$-functions with vanishing mean.

We then define the function spaces on the triangles $T \in \mathcal{T}_h$ (which may have a curved edge), using 
the notation $x = F_{T}(\hat{x})$ and the Piola transform:
\begin{alignat*}{1}
\bV_k(T) =& \{\bv \in \bH^1(T) : \bv(x) = A_T(\hat{x})\hat{\bv}(\hat{x}), \ \exists \hat{\bv}\in \hat{\bV}_k\}, \quad \bV_{k,0}(T) = \bV_k(T) \cap \bH_0^1(T), \\
Q_{k-1}(T) =& \{q \in L^2(T) : q(x) = \hat{q}(\hat{x}), \ \exists \hat{q} \in \hat{Q}_{k-1}\}, \\
Q_{k-1,0}(T) =& \{q \in L^2(T) : q(x) = \hat{q}(\hat{x}), \ \exists \hat{q} \in \hat{Q}_{k-1,0}\},
\end{alignat*}
where the matrix $A_T$ is given by \eqref{eqn:ATDEF}.
Note that if $F_T$ is affine, then $\bV(T) = \tilde{\bV}(\tilde{T})$ and $Q(T) = \tilde{Q}(\tilde{T})$.

It is important to note that functions in $\bV_{k-1}(T)$ and $Q_{k-1}(T)$ are not necessarily piecewise-polynomial spaces if $T$ is not affine. In addition, on curved triangles the matrix $A_T$ is not necessarily constant
on straight edges, and therefore functions in $\bV_{k-1}(T)$ are not necessarily
polynomials on such edges.
However, the following lemma shows that the the normal component of $\bv$ will be a polynomial when restricted to a straight edge. 
\begin{lemma}\label{normsPoly}
Let $\bv \in \bV_k(T)$, and suppose that $e$ is a straight edge of $\partial T$ with unit normal $\bn$. Then $\bv \cdot \bn \vert_e \in \pol_k(e)$.
\end{lemma}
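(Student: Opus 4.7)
The plan is to combine the pointwise normal-trace identity for the Piola transform with the observation that, in this paper's construction, straight edges of $\partial T$ correspond to restrictions of $F_T$ that are affine on the corresponding reference edge.

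First, let $\hat e := F_T^{-1}(e)$ and denote by $\hat\bn$ and $\hat\bt$ its (constant) outward unit normal and unit tangent. Using $\bv = (A_T\hat\bv)\circ F_T^{-1}$ with $A_T = DF_T/\det DF_T$, I would write
\begin{equation*}
\bv(x)\cdot\bn = \frac{\hat\bv(\hat x)\cdot\bigl(DF_T(\hat x)^T \bn\bigr)}{\det DF_T(\hat x)},\qquad x = F_T(\hat x)\in e.
\end{equation*}
Since $e$ is straight, the curve $F_T(\hat e)\subset e$ has tangent $DF_T\hat\bt$ parallel to $\bt$, so $DF_T^T\bn$ is orthogonal to $\hat\bt$ and therefore parallel to $\hat\bn$ along $\hat e$. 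Representing $DF_T$ in the orthonormal frames $\{\hat\bt,\hat\bn\}$ and $\{\bt,\bn\}$ yields a triangular $2\times 2$ matrix with diagonal entries $\alpha := (DF_T\hat\bt)\cdot\bt$ and $\gamma := (DF_T\hat\bn)\cdot\bn$. Hence $\det DF_T = \alpha\gamma$ and the formula reduces to
\begin{equation*}
\bv\cdot\bn = \frac{\hat\bv\cdot\hat\bn}{\alpha}\qquad\text{on }\hat e.
\end{equation*}

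The next step is to show $\alpha$ is constant on $\hat e$, i.e., that $F_T|_{\hat e}$ is affine. Because $F_T = G_h\circ F_{\tilde T}$ with $F_{\tilde T}$ affine, this reduces to showing $G_h$ is affine on $\tilde e := F_{\tilde T}(\hat e)$. In the paper's setup $G$ is the identity on every interior edge of $\tilde\calT_h$, so its degree-$k$ nodal interpolant $G_h$ also equals the identity on each such $\tilde e$. Since a straight edge of $\partial T$ arises from an interior edge of $\tilde\calT_h$ (boundary edges map to curved edges under the isoparametric construction), $F_T|_{\hat e}$ is affine and hence $\alpha$ is constant on $\hat e$.

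Finally, with $\alpha$ constant, $\bv\cdot\bn|_e$ is a scalar multiple of $\hat\bv\cdot\hat\bn$ composed with the affine map $F_T^{-1}|_e$. Since $\hat e$ lies in a single Clough-Tocher subtriangle, $\hat\bv\cdot\hat\bn\in\pol_k(\hat e)$; composing with an affine change of variable preserves this, and so $\bv\cdot\bn|_e\in\pol_k(e)$.

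The main obstacle is the middle step. A degree-$k$ polynomial map can in principle parametrize a straight line non-affinely, in which case $\alpha$ would be non-constant and $\bv\cdot\bn$ would become a genuinely rational (non-polynomial) function of arclength on $e$. The argument relies crucially on the isoparametric construction — specifically that $G$, and hence $G_h$, is the identity on all interior edges of $\tilde\calT_h$ — to exclude this scenario.
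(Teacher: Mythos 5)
Your proof is correct and takes essentially the same route the paper intends (it defers to \cite{NeilanOtus21}, Lemma 3.1, whose argument is exactly the pointwise normal-trace identity of the Piola transform combined with the affineness of $F_T$ on the reference preimage of a straight edge, so that the factor $\alpha$ is constant and $\bv\cdot\bn|_e$ is an affine reparametrization of $\hat\bv\cdot\hat\bn\in\pol_k(\hat e)$). You also correctly flag and resolve the one point the paper leaves implicit: ``straight edge'' must be read as an edge on which $G_h$ is the identity (cf.\ Proposition \ref{psiUps}, part 2), since a degree-$k$ map could in principle parametrize a straight segment non-affinely, in which case $\bv\cdot\bn|_e$ would only be rational.
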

The proof of this result is found in \cite[Lemma 3.1]{NeilanOtus21} for the case $k=2$
and essentially uses the well-known normal-preserving properties
of the Piola transform.
However, the result extends trivially to $\hat{\bv} \cdot \hat{\bn}$ a polynomial of arbitrary degree $k$.

With the local spaces now defined, we may state the following lemma showing
that functions in the finite element space $\bV_k(T)$ enjoy
inverse estimates similar to those for piecewise polynomials.
In addition, similar to isoparametric (polynomial) elements
defined via composition, high-order Sobolev norms
of functions in $\bV_k(T)$ are controlled by $k$th-order Sobolev norms.
Its proof is based on scaling arguments and is found in Appendix \ref{Ap:ProofInI}.

\begin{lemma}\label{lem:InvI}
Let $p,q\in [1,\infty]$ and $0\le m\le \ell$ be 
integers.  Then for any $T\in \calT_h$
and $\bv\in \bV_k(T)$,
\begin{align}\label{eqn:InvI1}
\|\bv\|_{W^{\ell,p}(K)}\le C h_T^{m-\ell+2(\frac{1}{p}-\frac1{q})} \|\bv\|_{W^{m,q}(K)}\qquad \forall K\in T^{ct}.
\end{align}
Moreover,
\begin{align}\label{eqn:InvI2}
    \|\bv\|_{W^{\ell,p}(K)}\le  C\|\bv\|_{W^{k,p}(K)}\qquad \forall \ell\ge k,\ \forall K\in T^{ct}.
\end{align}
\end{lemma}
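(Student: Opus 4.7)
My plan is to pass $\bv$ to its preimage $\hat\bv \in \hat\bV_k$ on the reference triangle via $\bv(x) = A_T(\hat x)\hat\bv(\hat x)$, $\hat x = F_T^{-1}(x)$, to exploit that $\hat\bv$ is piecewise polynomial of degree at most $k$ on $\hat T^{ct}$ (so lives in a fixed, $T$-independent, finite-dimensional space), and then to scale back to $K$. The argument rests on two scaling estimates that follow from Lemma~\ref{scalingBernardi} together with the Leibniz rule and the pointwise bounds on $A_T$ and $A_T^{-1}$ from Lemma~\ref{defAT}.

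First, I would apply Lemma~\ref{scalingBernardi} to the pair $(\bv,\, A_T\hat\bv)$ and combine the Leibniz rule with the bounds on $A_T$ to derive the forward estimate
\begin{equation*}
|\bv|_{W^{\ell,p}(K)} \;\le\; C\, h_T^{2/p-1}\sum_{s=0}^{\min(\ell,k)} h_T^{-s}\,|\hat\bv|_{W^{s,p}(\hat K)};
\end{equation*}
the truncation at $s=\min(\ell,k)$ comes from the vanishing of seminorms of order $>k$ of the polynomial $\hat\bv$. A mirror-image argument applied to $\hat\bv = A_T^{-1}(A_T\hat\bv)$, using the bounds on $A_T^{-1}$ from Lemma~\ref{defAT}, gives the reverse estimate
\begin{equation*}
|\hat\bv|_{W^{s,p}(\hat K)} \;\le\; C\, h_T^{1+s-2/p}\,\|\bv\|_{W^{s,p}(K)}.
\end{equation*}

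The second inequality of the lemma then follows immediately: when $\ell \ge k$ the forward estimate is independent of $\ell$ (its sum stops at $s=k$), so inserting the reverse estimate at each $s\le k$ causes every $h_T$-factor to cancel, leaving $\|\bv\|_{W^{\ell,p}(K)} \le C\|\bv\|_{W^{k,p}(K)}$.

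For the first inequality I would first treat the range $\ell\le k$, chaining the forward estimate with finite-dimensional norm equivalence on $\hat\bV_k$ (which bounds any reference seminorm of $\hat\bv$ by $\|\hat\bv\|_{W^{m,q}(\hat K)}$) and then using the reverse estimate to return to $\|\bv\|_{W^{m,q}(K)}$; selecting the dominant term of each sum (using $h_T\le 1$) and collecting the powers of $h_T$ assembles into the target exponent $m-\ell+2(1/p-1/q)$. For $\ell>k$ I would reduce to the $\ell=k$ case of the first inequality via the second inequality, which is consistent because $h_T^{m-k+2(1/p-1/q)}\le h_T^{m-\ell+2(1/p-1/q)}$ when $\ell\ge k$ and $h_T\le 1$. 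The main obstacle is the bookkeeping in the regime $\ell\le k$: the reverse scaling must be applied at the sharpest index $s=m$ so that the extra $h_T^m$ gain (over what one obtains by using only $\|\bv\|_{L^q(K)}$) actually materializes.
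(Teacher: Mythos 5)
Your overall strategy is the paper's: pull $\bv$ back to $\hat\bv\in\hat\bV_k$ through the Piola matrix, use the Bernardi scaling estimates (Lemma \ref{scalingBernardi}) together with the Leibniz rule and the bounds on $A_T$, $A_T^{-1}$ (Lemma \ref{defAT}), invoke finite-dimensional norm equivalence on the reference element, and scale back. Your forward and reverse scaling estimates are both correct, and your proof of \eqref{eqn:InvI2} is sound — the truncation of the Leibniz sum at $s=k$ (since $D^\beta\hat\bv=0$ for $|\beta|>k$) makes the $h_T$-powers cancel exactly; this is a slightly cleaner packaging of the paper's splitting into the ranges $r\le k$ and $r>k$. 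The $m=0$ case of \eqref{eqn:InvI1} also goes through exactly as in the paper.

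The gap is in \eqref{eqn:InvI1} for $m\ge 1$, and it is precisely the "bookkeeping obstacle" you flag at the end — but that obstacle cannot be overcome along the route you describe. Your chain is: forward estimate, then norm equivalence $\|\hat\bv\|_{W^{\ell,p}(\hat K)}\le C\|\hat\bv\|_{W^{m,q}(\hat K)}$, then the reverse estimate. The reverse estimate applied to the full norm gives
\begin{equation*}
\|\hat\bv\|_{W^{m,q}(\hat K)}\le C\sum_{i=0}^{m} h_T^{1+i-2/q}\|\bv\|_{W^{i,q}(K)},
\end{equation*}
whose dominant term (for $h_T\le 1$) is $i=0$, yielding only $C h_T^{1-2/q}\|\bv\|_{W^{m,q}(K)}$. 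The chain therefore produces the exponent $-\ell+2(1/p-1/q)$, i.e.\ the $m=0$ bound with $\|\bv\|_{W^{m,q}}$ on the right, and the claimed gain of $h_T^{m}$ never appears. Your proposed fix — "apply the reverse scaling at the sharpest index $s=m$" — would require the bound $\|\hat\bv\|_{W^{\ell,p}(\hat K)}\le C\,|\hat\bv|_{W^{m,q}(\hat K)}$, which is false on $\hat\bV_k$ for $m\ge 1$ (the right side is only a seminorm and vanishes on constants). Any argument confined to the reference element in this way must fail; the standard resolution (the one behind the paper's citation of \cite[Lemma 4.5.3]{brenner2008mathematical}) is to apply the $m=0$ case to the $m$-th order derivatives of $\bv$ on the physical element, bounding $|D^\alpha\bv|_{W^{\ell-m,p}(K)}$ for $|\alpha|=m$ in terms of $\|D^\alpha\bv\|_{L^q(K)}$ — which for the Piola-mapped space requires an extra observation, since $D^\alpha\bv$ no longer lies in $\bV_{k}(T)$. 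That step is what your proposal is missing.
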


\subsection{Degrees of Freedom on $\bV(T)$} \label{DOFs}
To describe the degrees of freedom
of the local velocity space $\bV(T)$, 
we first summarize the canonical
degrees of freedom for the reference space $\hat \bV_k$, i.e.,
the $k$th-degree Lagrange finite element space defined on the 
Clough-Tocher split.
It is well known that a function in this space
is uniquely determined  by (i) its values at the four vertices in $\hat T^{ct}$ (4 nodes); (ii)
its values at $(k-1)$ distinct points for each of the six (open) edges in $\hat T^{ct}$ ($6(k-1)$ nodes);
and (iii) its values at $\frac12(k-1)(k-2)$ distinct points for each of the three (open) subtriangles
in $\hat T^{ct}$ ($\frac32(k-1)(k-2)$ nodes). In (iii), the $\frac12(k-1)(k-2)$ points for each subtriangle must be chosen such that they uniquely determine a polynomial of degree $(k-3)$.
We see that the total number of nodes is $M_k:=4+6(k-1)+\frac32(k-1)(k-2) = \frac32k(k+1)+1$,
and therefore the dimension of $\hat \bV_k$ is $3k(k+1)+2$.
By setting $\hat{\calN}_k = \{\hat a_i\}_{i=1}^{M_k}$ to be
the set of these points, then a function $\hat \bv\in \hat \bV_k$ is uniquely determined
by the values $\hat \bv(\hat a_i)$ for all $a_i\in \hat{\calN}_k$.

To ensure sufficient weak continuity properties
of the global finite element spaces defined below,
we specify that the location of the points
on the three boundary edges of $\hat T$ correspond 
to the nodes of the Gauss-Lobatto quadrature scheme.
In particular, for a boundary edge $\hat e\subset \p \hat T$,
the nodes on the closure of the edge, denoted by $\{\hat m_i\}_{i=1}^{k+1} \subset \bar{\hat e}$,
satisfy
\[
\sum_{i=1}^{k+1} \hat \omega_i \hat q(\hat m_i) = \int_{\hat e} \hat q\qquad \forall \hat q\in \pol_{2k-1}(\hat e),
\]
and two of the nodes in this set correspond to the vertices of $\hat e$.
The other nodes (i.e., nodes on interior edges and the interior of subtriangles)
can be chosen such that they satisfy the above properties to form a unisolvent set
of degrees of freedom on $\hat \bV_k$. However,
to simplify the implementation of the resulting finite element spaces,
we also take nodes on the interior edges to be the Gauss-Lobttto points,
and let the nodes in the interior of subtriangles to be the canonical
 Lagrange nodes (cf.~Figures \ref{fig:Fancy} and \ref{fig:Fancy2}).


We map these nodes to  $\tilde T\in \tilde \calT_h$ and $T\in \calT_h$
via the mappings $F_{\tilde T}$ and $F_T$, respectively:
\begin{align*}
    \calN_{k}(\tilde T) = \{F_{\tilde T}(\hat a_i):\ \hat a_i\in \hat \calN_k\},\quad
    \calN_{k}(T) = \{F_{T}(\hat a_i):\ \hat a_i\in \hat \calN_k\}.
\end{align*}
Due to the invariance of spaces of polynomials under affine transformations,
we see that the nodes in $\calN_{k}(\tilde T)$ that lie 
on an edge $\tilde e\subset \bar{\tilde T}$ correspond to the Gauss-Lobatto quadrature
rule on that edge.  Likewise, if $e\subset \p T$ is a straight edge of $T$,
then the nodes in $\calN_k(T)$ that lie on $\bar e$
are the nodes of the Gauss-Lobatto quadrature rule on $e$.

Finally, we note that, since functions in $\hat \bV_k$
are uniquely determined by their values at the nodes $\hat \calN_k$,
and since the matrix $A_T$ is invertible, it follows that any $\bv\in \bV_k(T)$ is uniquely
determined by the values $\bv(a_i)$, $a_i\in \calN_k(T)$.

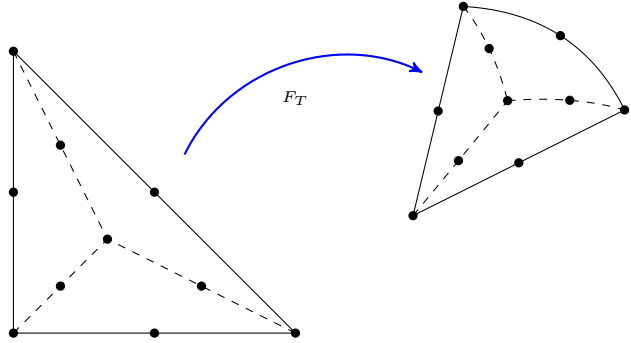
\begin{figure}[h]
\centering
\begin{tikzpicture}[scale=1.25]
\draw[-](0,0)--(3,0);
\draw[-](3,0)--(0,3);
\draw[-](0,3)--(0,0);
\draw[-,dashed](0,0)--(1,1);
\draw[-,dashed](3,0)--(1,1);
\draw[-,dashed](0,3)--(1,1);




\node[inner sep = 0pt,minimum size=3.5pt,fill=black!100,circle] (n2) at (0,0)  {};
\node[inner sep = 0pt,minimum size=3.5pt,fill=black!100,circle] (n2) at (3,0)  {};
\node[inner sep = 0pt,minimum size=3.5pt,fill=black!100,circle] (n2) at (0,3)  {};
\node[inner sep = 0pt,minimum size=3.5pt,fill=black!100,circle] (n2) at (1.5,0)  {};
\node[inner sep = 0pt,minimum size=3.5pt,fill=black!100,circle] (n2) at (0,1.5)  {};
\node[inner sep = 0pt,minimum size=3.5pt,fill=black!100,circle] (n2) at (1,1)  {};
\node[inner sep = 0pt,minimum size=3.5pt,fill=black!100,circle] (n2) at (0.5,0.5)  {};
\node[inner sep = 0pt,minimum size=3.5pt,fill=black!100,circle] (n2) at (0.5,2)  {};
\node[inner sep = 0pt,minimum size=3.5pt,fill=black!100,circle] (n2) at (2,0.5)  {};
\node[inner sep = 0pt,minimum size=3.5pt,fill=black!100,circle] (n2) at (1.5,1.5)  {};


\draw[-](4.785,3.4768875)--(4.25,1.25);
\draw[-](4.25,1.25)--(6.5,2.375);

 \node (phys) at (4.5,2.75) {};
 \node (ref)  at (1.75,1.75) {}
 edge[pil,bend left=45,blue] (phys.west);
 \draw (3,2.5) node {\tiny$F_{T}$};
 
 \node[inner sep = 0pt,minimum size=3.5pt,fill=black!100,circle] (n2) at (4.785,3.4768875)  {};
  \node[inner sep = 0pt,minimum size=3.5pt,fill=black!100,circle] (n2) at (4.25,1.25)  {};
  \node[inner sep = 0pt,minimum size=3.5pt,fill=black!100,circle] (n2) at (6.5,2.375)  {};


    \node[inner sep = 0pt,minimum size=3.5pt,fill=black!100,circle] (n2) at (4.51750000000000,2.36344375000000)  {};
    \node[inner sep = 0pt,minimum size=3.5pt,fill=black!100,circle] (n2) at (5.37500000000000,1.8125)  {};
  \node[inner sep = 0pt,minimum size=3.5pt,fill=black!100,circle] (n2) at (5.8175,3.16584688)  {};
  
  \draw[black, domain=0:1,smooth,variable=\t]plot (4.78500000000000 - 0.699999999999999*\t^2 + 2.41500000000000*\t,3.47688750000000 - 0.959612520000000*\t^2 - 0.142274980000000*\t);
  
      \draw[black, dashed,domain=0:0.333,smooth,variable=\t]plot (4.25000000000000 + 0.699999999999999*\t^2 + 2.78500000000000*\t,1.25000000000000 + 0.959612520000000*\t^2 + 3.35188750000000*\t);
    \draw[black, dashed,domain=0.333:1,smooth,variable=\t]plot (4.51750000000000 - 0.350000000000000*\t^2 + 2.33250000000000*\t,2.36344375000000 - 0.479806260000000*\t^2 + 0.491362510000000*\t);
    \draw[black, dashed,domain=0:0.333,smooth,variable=\t]plot (4.78500000000000 - 1.40000000000000*\t^2 + 1.88000000000000*\t,3.47688750000000 - 1.91922504000000*\t^2 - 2.36916248000000*\t);    
    
 \node[inner sep = 0pt,minimum size=3.5pt,fill=black!100,circle] (n2) at (5.25611111111111,2.47391944666667)  {};
 \node[inner sep = 0pt,minimum size=3.5pt,fill=black!100,circle] (n2) at (4.73361111111111,1.83530382000000)  {};
 \node[inner sep = 0pt,minimum size=3.5pt,fill=black!100,circle] (n2) at (5.91694444444444,2.47777153000000)  {}; 
  \node[inner sep = 0pt,minimum size=3.5pt,fill=black!100,circle] (n2) at (5.05944444444444,3.02871528000000)  {}; 
    
\end{tikzpicture}
\caption{\label{fig:Fancy}Clough-Tocher split and degrees of freedom 
for the quadratic Lagrange finite element space $(k=2$).  The local split
is mapping to the curved element $T$ via the polynomial diffeomorphism $F_T$.}
\end{figure}


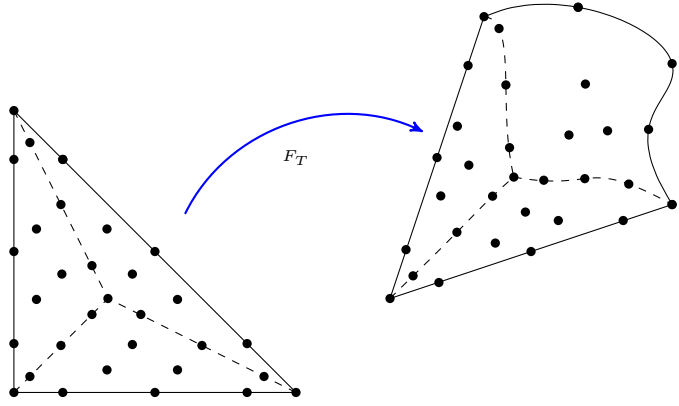
\begin{figure}[h]
\centering
\begin{tikzpicture}[scale=1.25]
\draw[-](0,0)--(3,0);
\draw[-](3,0)--(0,3);
\draw[-](0,3)--(0,0);

\draw[-,dashed](0,0)--(1,1);
\draw[-,dashed](3,0)--(1,1);
\draw[-,dashed](0,3)--(1,1);


%
%

\node[inner sep = 0pt,minimum size=3.5pt,fill=black!100,circle] (n2) at (0,0)  {};

\node[inner sep = 0pt,minimum size=3.5pt,fill=black!100,circle] (n2) at (3,0)  {};

\node[inner sep = 0pt,minimum size=3.5pt,fill=black!100,circle] (n2) at (0,3)  {};
\node[inner sep = 0pt,minimum size=3.5pt,fill=black!100,circle] (n2) at (1.5,0)  {};
\node[inner sep = 0pt,minimum size=3.5pt,fill=black!100,circle] (n2) at (0,1.5)  {};
\node[inner sep = 0pt,minimum size=3.5pt,fill=black!100,circle] (n2) at (1,1)  {};

\node[inner sep = 0pt,minimum size=3.5pt,fill=black!100,circle] (n2) at (0.5,0.5)  {};

\node[inner sep = 0pt,minimum size=3.5pt,fill=black!100,circle] (n2) at (0.83,0.83)  {};

\node[inner sep = 0pt,minimum size=3.5pt,fill=black!100,circle] (n2) at (0.17,0.17)  {};

\node[inner sep = 0pt,minimum size=3.5pt,fill=black!100,circle] (n2) at (0.5,2)  {};

\node[inner sep = 0pt,minimum size=3.5pt,fill=black!100,circle] (n2) at (0.83,1.35)  {};

\node[inner sep = 0pt,minimum size=3.5pt,fill=black!100,circle] (n2) at (0.17,2.66)  {};

\node[inner sep = 0pt,minimum size=3.5pt,fill=black!100,circle] (n2) at (2,0.5)  {};

\node[inner sep = 0pt,minimum size=3.5pt,fill=black!100,circle] (n2) at (2.66,0.17)  {};

\node[inner sep = 0pt,minimum size=3.5pt,fill=black!100,circle] (n2) at (1.35,0.83)  {};

\node[inner sep = 0pt,minimum size=3.5pt,fill=black!100,circle] (n2) at (1.5,1.5)  {};

\node[inner sep = 0pt,minimum size=3.5pt,fill=black!100,circle] (n2) at (0.52,2.48)  {};

\node[inner sep = 0pt,minimum size=3.5pt,fill=black!100,circle] (n2) at (0.52,2.48)  {};

\node[inner sep = 0pt,minimum size=3.5pt,fill=black!100,circle] (n2) at (2.48,0.52)  {};

\node[inner sep = 0pt,minimum size=3.5pt,fill=black!100,circle] (n2) at (0,0.52)  {};

\node[inner sep = 0pt,minimum size=3.5pt,fill=black!100,circle] (n2) at (0,2.48)  {};

\node[inner sep = 0pt,minimum size=3.5pt,fill=black!100,circle] (n2) at (2.48,0)  {};

\node[inner sep = 0pt,minimum size=3.5pt,fill=black!100,circle] (n2) at (0.52,0)  {};

\node[inner sep = 0pt,minimum size=3.5pt,fill=black!100,circle] (n2) at (0.99,0.24)  {};

\node[inner sep = 0pt,minimum size=3.5pt,fill=black!100,circle] (n2) at (1.26,0.51)  {};

\node[inner sep = 0pt,minimum size=3.5pt,fill=black!100,circle] (n2) at (1.74,0.24)  {};

\node[inner sep = 0pt,minimum size=3.5pt,fill=black!100,circle] (n2) at (0.24,0.99)  {};

\node[inner sep = 0pt,minimum size=3.5pt,fill=black!100,circle] (n2) at (0.51,1.26)  {};

\node[inner sep = 0pt,minimum size=3.5pt,fill=black!100,circle] (n2) at (0.24,1.74)  {};

\node[inner sep = 0pt,minimum size=3.5pt,fill=black!100,circle] (n2) at (1.26,1.26)  {};

\node[inner sep = 0pt,minimum size=3.5pt,fill=black!100,circle] (n2) at (0.99,1.74)  {};

\node[inner sep = 0pt,minimum size=3.5pt,fill=black!100,circle] (n2) at (1.74,0.99)  {};

\draw[-](5,4)--(4,1);
\draw[-](4,1)--(7,2);

  
  \draw[black, domain=0:1,smooth,variable=\t]plot (12.9681*\t^4 - 19.9989*\t^3 + 3.3075*\t^2 + 5.7219*\t + 5.0014,4.00000000000000 - 10.6920*\t^4 + 21.3111000000000*\t^3 - 15.2532000000000*\t^2 + 2.63550000000000*\t);

 \node[inner sep = 0pt,minimum size=3.5pt,fill=black!100,circle] (n2) at (5,4)  {};

  \node[inner sep = 0pt,minimum size=3.5pt,fill=black!100,circle] (n2) at (7,2)  {};

   \node[inner sep = 0pt,minimum size=3.5pt,fill=black!100,circle] (n2) at (6.75,2.8)  {};

    \node[inner sep = 0pt,minimum size=3.5pt,fill=black!100,circle] (n2) at (7,3.5)  {};

     \node[inner sep = 0pt,minimum size=3.5pt,fill=black!100,circle] (n2) at (6,4.1)  {};

       \draw[black, dashed,domain=0:1,smooth,variable=\t]plot (5.00140000000000 - 0.7728*\t^4 + 2.64580000000000*\t^3 - 2.90950000000000*\t^2 + 1.35180000000000*\t,4.00000000000000 - 1.0444*\t^4 + 3.72360000000000*\t^3 - 4.26550000000000*\t^2 - 0.121500000000000*\t); 

       \draw[black, dashed,domain=0:1,smooth,variable=\t]plot (5.31670000000000 + 1.7910*\t^4 - 1.47730000000000*\t^3 - 0.571000000000001*\t^2 + 1.94060000000000*\t,2.29220000000000 - 1.0762*\t^4 + 0.477700000000000*\t^3 + 0.651500000000000*\t^2 - 0.343800000000000*\t); 

       \draw[black, dashed,domain=0:1,smooth,variable=\t]plot (3.99999999999999 + 1.3299*\t^4 - 2.41010000000000*\t^3 + 1.07540000000000*\t^2 + 1.32150000000000*\t,1.00000000000000 + 0.9326*\t^4 - 1.81260000000000*\t^3 + 0.850699999999999*\t^2 + 1.32150000000000*\t);

 \node[inner sep = 0pt,minimum size=3.5pt,fill=black!100,circle] (n2) at (4,1)  {};

 \node[inner sep = 0pt,minimum size=3.5pt,fill=black!100,circle] (n2) at (4.52,1.17)  {};

 \node[inner sep = 0pt,minimum size=3.5pt,fill=black!100,circle] (n2) at (5.5,1.5)  {};

 \node[inner sep = 0pt,minimum size=3.5pt,fill=black!100,circle] (n2) at (6.48,1.83)  {};

 \node[inner sep = 0pt,minimum size=3.5pt,fill=black!100,circle] (n2) at (7,2)  {};

 \node[inner sep = 0pt,minimum size=3.5pt,fill=black!100,circle] (n2) at (4.83,3.48)  {};

 \node[inner sep = 0pt,minimum size=3.5pt,fill=black!100,circle] (n2) at (4.5,2.5)  {};
 \node[inner sep = 0pt,minimum size=3.5pt,fill=black!100,circle] (n2) at (4.17,1.52)  {};
 

\node[inner sep = 0pt,minimum size=3.5pt,fill=black!100,circle] (n2) at (5.12,1.59)  {};

\node[inner sep = 0pt,minimum size=3.5pt,fill=black!100,circle] (n2) at (5.44,1.92)  {};

\node[inner sep = 0pt,minimum size=3.5pt,fill=black!100,circle] (n2) at (5.79,1.83)  {};

\node[inner sep = 0pt,minimum size=3.5pt,fill=black!100,circle] (n2) at (4.539936160, 2.091670599)  {};

\node[inner sep = 0pt,minimum size=3.5pt,fill=black!100,circle] (n2) at (4.838442980, 2.419232387)  {};

\node[inner sep = 0pt,minimum size=3.5pt,fill=black!100,circle] (n2) at (4.716222093, 2.834167208)  {};

\node[inner sep = 0pt,minimum size=3.5pt,fill=black!100,circle] (n2) at (5.903266890, 2.740373611)  {};

\node[inner sep = 0pt,minimum size=3.5pt,fill=black!100,circle] (n2) at (6.078077875, 3.282501084)  {};

\node[inner sep = 0pt,minimum size=3.5pt,fill=black!100,circle] (n2) at (6.313783890, 2.785097663)  {};


      \node[inner sep = 0pt,minimum size=3.5pt,fill=black!100,circle] (n2) at (4.71145625, 1.70513750)  {};

\node[inner sep = 0pt,minimum size=3.5pt,fill=black!100,circle] (n2) at (5.090772422, 2.089067416)  {};

\node[inner sep = 0pt,minimum size=3.5pt,fill=black!100,circle] (n2) at (4.245003984, 1.241113843)  {}; 

\node[inner sep = 0pt,minimum size=3.5pt,fill=black!100,circle] (n2) at (5.635351443, 2.258856901)  {};

\node[inner sep = 0pt,minimum size=3.5pt,fill=black!100,circle] (n2) at (5.3167, 2.2922)  {};

\node[inner sep = 0pt,minimum size=3.5pt,fill=black!100,circle] (n2) at (6.0715250, 2.2756250)  {};

\node[inner sep = 0pt,minimum size=3.5pt,fill=black!100,circle] (n2) at (6.539313694, 2.218060549)  {};

\node[inner sep = 0pt,minimum size=3.5pt,fill=black!100,circle] (n2) at (6.001100078, 4.099876640)  {};

\node[inner sep = 0pt,minimum size=3.5pt,fill=black!100,circle] (n2) at (5.271425202, 2.605571366)  {};

\node[inner sep = 0pt,minimum size=3.5pt,fill=black!100,circle] (n2) at (5.2323500, 3.2730500)  {};

\node[inner sep = 0pt,minimum size=3.5pt,fill=black!100,circle] (n2) at (5.159474815, 3.873493802)  {};

 \node (phys) at (4.5,2.75) {};
 \node (ref)  at (1.75,1.75) {}
 edge[pil,bend left=45,blue] (phys.west);
 \draw (3,2.5) node {\tiny$F_{T}$};

\end{tikzpicture}
\caption{\label{fig:Fancy2}Clough-Tocher split and degrees of freedom for 
the quartic Lagrange finite element space ($k=4$). Edge degrees of freedom on $\hat T$ are placed
at Gauss-Lobatto points.}
\end{figure}

\section{Global Spaces}\label{gloSpace}

On the affine triangulation $\tilde{\mathcal{T}}_h$, we define the Scott-Vogelius pair
\begin{alignat*}{1}
\tilde{\bV}_k^{h} =& \{\tilde{\bv} \in \bH_0^{1}(\tilde{\Omega}_h) : \tilde{\bv}\vert_{\tilde{T}}\in \tilde{\bV}_k(\tilde{T}) \ \forall \tilde{T} \in \tilde{\mathcal{T}}_h\}, \\
\tilde{Q}_{k-1}^h =& \{\tilde{q} \in L_0^2(\tilde{\Omega}_h) : \tilde{q}\vert_{\tilde{T}} \in \tilde{Q}_{k-1}(\tilde{T}) \ \forall \tilde{T} \in \tilde{\mathcal{T}}_h\}.
\end{alignat*}
We see that $\tilde{\bV}_k^{h}$ is the $k$th degree Lagrange finite element space
with respect to the Clough-Tocher refinement of $\tilde \calT_h$, and $\tilde{Q}_{k-1}^h$ is 
the space of discontinuous polynomials of degree $(k-1)$, 
again with respect to the Clough-Tocher refinement.
The finite elements $\tilde \bV_h\times \tilde Q_h$ represents
a stable and divergence--free Stokes pair \cite{ArnoldQin92}, however its use formally leads
to a suboptimal scheme on smooth domains due to geometric error.

To define the isoparametric spaces, we define the operators $\bPsi_k$ and $\Upsilon_{k-1}$ such that 
$\bPsi_{k}\vert_{T}:\tilde{\bV}(\tilde{T}) \to \bV(T)$ and $\Upsilon_{k-1}\vert_{T}: \tilde{Q}(\tilde{T}) \to Q(T)$
and are uniquely determined on each $\tilde{T} \in \tilde\calT_h$ and $T \in \calT_h$ with $T = G_h(\tilde{T})$ as follows:
\begin{enumerate}
    \item[1.]  $(\bPsi_{k}\vert_{T}\tilde{\bv})(a) = \tilde{\bv}(\tilde{a})$ $\forall \tilde{a} \in \mathcal{N}_k(\tilde{T})$, with $a = G_h(\tilde{a})\in \calN_k(T)$, and

    \item[2.] $(\Upsilon_{k-1}\vert_{T}\tilde{q}) = \tilde{q}\circ G_h^{-1}$
\end{enumerate}
Thus, $\bPsi_k$ maps functions in $\tilde \bV_k^h$ to the isoparametric
domain $\Omega_h$ via the Piola transform and interpolation, and 
$\Upsilon_{k-1}$ maps functions in $\tilde Q_{k-1}^h$ to $\Omega_h$ via composition.

In the following proposition, we state some properties
of the mapping $\bPsi_k$ without proof, as the result is proven for $k=2$ in \cite[Theorem 3.7]{NeilanOtus21}. 
To extend the results to arbitrary degree $k$,  one only needs to recognize 
that a $k$-th degree polynomial along an edge $e \subset \partial T$ is uniquely determined 
by its values at the $k+1$ nodal points that lie on this edge. 
\begin{proposition}\label{psiUps}
The following properties are satisfied:
\begin{enumerate}
\item[1.] If $F_T$ is affine, then $\bPsi_{k}\vert_{T}$ is the identity operator.
\item[2.] If $e \subset \partial T$ is a straight edge (so $e \subset \partial \tilde{T}$ with $T = G_h(\tilde T)$), then 
\begin{equation*}
(\bPsi_{k}\vert_{T}\tilde{\bv})\cdot \bn \vert_e = \tilde{\bv}\cdot \bn \vert_e\qquad \forall \tilde \bv\in \tilde \bV_k^h.
\end{equation*}
\item[3.] There holds $\|\bPsi_{k}\vert_{T}\tilde{\bv}\|_{H^1(T)} \leq C \|\tilde{\bv}\|_{H^1(\tilde{T})}$ for all $\tilde \bv\in \tilde \bV_k^h$.
\end{enumerate}
\end{proposition}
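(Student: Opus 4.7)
\smallskip

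\noindent\textbf{Proof proposal.} The plan is to treat the three items in turn, with the first two following from the defining nodal property of $\bPsi_k$ together with Lemma \ref{normsPoly}, and the third requiring scaling through $\hat T$ combined with the bounds on $A_T$ from Lemma \ref{defAT}.

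For item (1), I would observe that if $F_T$ is affine then (as already noted in the excerpt) $T=\tilde T$, the map $G_h|_{\tilde T}$ is the identity, and $\bV_k(T)=\tilde\bV_k(\tilde T)$. Consequently $\calN_k(T)=\calN_k(\tilde T)$, and from the defining property $(\bPsi_k|_T\tilde\bv)(a)=\tilde\bv(a)$ at every node, the two elements of $\bV_k(T)=\tilde\bV_k(\tilde T)$ coincide at the full set of degrees of freedom; unisolvence then forces $\bPsi_k|_T\tilde\bv=\tilde\bv$.

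For item (2), I would exploit the normal-trace structure. Since $e$ is straight, $G_h$ is the identity on $e$, so the Gauss--Lobatto nodes $\{a_i\}_{i=1}^{k+1}$ on $\bar e$ satisfy $a_i=\tilde a_i$, and $\bn$ is the same on $e$ and on $\tilde e$. By Lemma \ref{normsPoly}, $(\bPsi_k|_T\tilde\bv)\cdot\bn|_e\in\pol_k(e)$, and clearly $\tilde\bv\cdot\bn|_{\tilde e}\in\pol_k(\tilde e)=\pol_k(e)$ because $\tilde\bv|_{\tilde T}$ is $k$th degree polynomial along any edge of $\partial \tilde T$. Evaluating at the nodes on $\bar e$ yields $(\bPsi_k|_T\tilde\bv)(a_i)\cdot\bn=\tilde\bv(a_i)\cdot\bn$ for all $k+1$ Gauss--Lobatto points by the defining property of $\bPsi_k$; two polynomials in $\pol_k(e)$ agreeing at $k+1$ distinct points on $e$ must coincide.

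The main work is item (3). I would pull everything back to the reference element. Write $\tilde\bv(\tilde x)=\hat\bv(\hat x)$ for some $\hat\bv\in\hat\bV_k$ and $(\bPsi_k|_T\tilde\bv)(x)=A_T(\hat x)\hat\bu(\hat x)$ for some $\hat\bu\in\hat\bV_k$ to be identified. The nodal identity $A_T(\hat a_i)\hat\bu(\hat a_i)=\hat\bv(\hat a_i)$ gives $\hat\bu(\hat a_i)=A_T(\hat a_i)^{-1}\hat\bv(\hat a_i)$ at every node of $\hat\calN_k$. Because $\hat\bV_k$ is finite dimensional, all norms are equivalent there; combined with the bound $\|A_T^{-1}\|_{L^\infty(\hat T)}\le C h_T$ from Lemma \ref{defAT}, this yields
\[
\|\hat\bu\|_{H^1(\hat T)}\le C\max_i|\hat\bu(\hat a_i)|\le C h_T\max_i|\hat\bv(\hat a_i)|\le C h_T\|\hat\bv\|_{H^1(\hat T)}.
\]
Then I would use the Piola scaling on $\bPsi_k|_T\tilde\bv=A_T\hat\bu\circ F_T^{-1}$: using $\|A_T\|_{L^\infty(\hat T)}\le C h_T^{-1}$, $\|\nabla A_T\|_{L^\infty(\hat T)}\le C$, the bounds on $DF_T^{\pm 1}$, and $\det(DF_T)\sim h_T^2$ from \eqref{eqn:FTBounds}, one obtains $\|\bPsi_k|_T\tilde\bv\|_{H^1(T)}\le C h_T^{-1}\|\hat\bu\|_{H^1(\hat T)}$. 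Chaining these with the standard affine scaling $\|\hat\bv\|_{H^1(\hat T)}\le C\|\tilde\bv\|_{H^1(\tilde T)}$ (using $h_{\tilde T}\sim h_T$ by shape regularity) gives the desired bound. The main obstacle will be bookkeeping the $h_T$ powers carefully when differentiating the product $A_T\hat\bu$ and converting between $L^2(T)$ and $L^2(\hat T)$ norms, since the Piola transform and the variable matrix $A_T$ interact in a way that makes the cancellation of $h_T$ factors slightly delicate; however, these are precisely the manipulations carried out for $k=2$ in \cite{NeilanOtus21}, and they extend verbatim once the nodal identification above is in hand.
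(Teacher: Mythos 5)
Your treatments of items (1) and (2) are correct and are exactly the route the paper intends: (1) is unisolvence on $\bV_k(T)=\tilde\bV_k(\tilde T)$ when $T=\tilde T$, and (2) combines Lemma \ref{normsPoly} with the fact that a polynomial in $\pol_k(e)$ is determined by its values at the $k+1$ nodal points on $\bar e$ — which is precisely the observation the paper highlights as the only new ingredient needed beyond the $k=2$ case.

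Item (3), however, has a genuine gap: the final link in your chain, the ``standard affine scaling'' $\|\hat\bv\|_{H^1(\hat T)}\le C\|\tilde\bv\|_{H^1(\tilde T)}$, is false. Under $\tilde x=F_{\tilde T}(\hat x)$ in two dimensions the seminorm is invariant up to constants, but the $L^2$ part scales as $\|\hat\bv\|_{L^2(\hat T)}\simeq h_{\tilde T}^{-1}\|\tilde\bv\|_{L^2(\tilde T)}$, so the correct statement is $\|\hat\bv\|_{H^1(\hat T)}\le Ch_{\tilde T}^{-1}\|\tilde\bv\|_{H^1(\tilde T)}$. Your (individually correct) intermediate estimates $\|\bPsi_k|_T\tilde\bv\|_{H^1(T)}\le Ch_T^{-1}\|\hat\bu\|_{H^1(\hat T)}$ and $\|\hat\bu\|_{H^1(\hat T)}\le Ch_T\|\hat\bv\|_{H^1(\hat T)}$ therefore chain only to $\|\bPsi_k|_T\tilde\bv\|_{H^1(T)}\le Ch_T^{-1}\|\tilde\bv\|_{H^1(\tilde T)}$, which is off by a full power of $h_T$; no amount of careful bookkeeping of the $A_T$ factors recovers this, since what you actually control is $Ch_T^{-1}\|\tilde\bv\|_{L^2(\tilde T)}$, and $h_T^{-1}\|\tilde\bv\|_{L^2(\tilde T)}$ is not bounded by $\|\tilde\bv\|_{H^1(\tilde T)}$ for a general finite element function. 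The missing idea is to use the hypothesis $\tilde\bv\in\tilde\bV_k^h\subset\bH_0^1(\tilde\Omega_h)$: by item (1), $\bPsi_k|_T$ is the identity on every affine $T$, so only curved elements matter, and every curved $T$ has an edge on $\partial\Omega_h$ on which $\tilde\bv$ vanishes. The Poincar\'e--Friedrichs inequality on such a $\tilde T$ gives $\|\tilde\bv\|_{L^2(\tilde T)}\le Ch_{\tilde T}\|\nabla\tilde\bv\|_{L^2(\tilde T)}$, which absorbs the stray $h_T^{-1}$ and closes the argument. Without invoking the boundary condition, the claimed bound cannot be reached by your scaling chain.
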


Consequently, global function spaces  defined on the isoparametric mesh $\mathcal{T}_h$ are given by
\begin{equation*}
    \bV_k^h := \{ \bv : \bv= \bPsi_k \tilde{\bv}, \ \exists \tilde{\bv}\in \tilde{\bV}_k^h\}, \quad Q_k^h := \{q : q = \Upsilon_{k-1} \tilde{q}, \ \exists \tilde{q}\in \tilde{Q}_k^h\}.
\end{equation*}

\begin{remark}
    From the boundary conditions applied to the space $\tilde{\bV}_k^h$ and the definition of $\bPsi_k$, we see 
    that functions in $\bV_k^h$ are continuous at the degrees of freedom, and vanish on $\partial \Omega_h$.
\end{remark}

With these spaces defined, we have the following results.
\begin{lemma}\label{Hdiv}
There holds $\bV_k^h \subset \bH_0({\rm{div}};\Omega_h) = \{\bv \in \bL^2(\Omega_h) : \ \dive{\bv}\in L^2(\Omega_h), \ \bv \cdot \bn\vert_{\partial \Omega_h} = 0\}$.
\end{lemma}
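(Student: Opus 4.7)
The plan is to verify the three defining properties of $\bH_0(\mathrm{div};\Omega_h)$ in turn: namely, that $\bv \in \bL^2(\Omega_h)$, that $\dive\bv \in L^2(\Omega_h)$ (as a distribution, not merely piecewise), and that $\bv\cdot\bn = 0$ on $\partial\Omega_h$. Fix $\bv = \bPsi_k \tilde\bv \in \bV_k^h$ with $\tilde\bv \in \tilde\bV_k^h$. Since $\bv|_T \in \bH^1(T)$ for every $T \in \calT_h$ by construction of $\bV_k(T)$, summing over the (finitely many) elements yields $\bv \in \bL^2(\Omega_h)$ and a piecewise divergence $\dive_h \bv \in L^2(\Omega_h)$.

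The key step is to upgrade the piecewise divergence to the distributional one, for which it suffices to establish normal continuity of $\bv$ across each interior edge $e$ of $\calT_h$. Here I would use the crucial observation that every interior edge of $\calT_h$ is straight: by assumption $G$ is the identity on interior edges of $\tilde\calT_h$, and since the identity is a polynomial of degree $\le k$, its nodal interpolant $G_h$ restricted to any interior edge remains the identity. Thus each interior edge $e$ of $\calT_h$ coincides with the corresponding interior edge $\tilde e$ of $\tilde\calT_h$, and is shared by two macroelements $T_i = G_h(\tilde T_i)$, $i=1,2$. Applying Proposition \ref{psiUps}(2) on each side gives
\[
(\bv|_{T_i})\cdot\bn\big|_e = (\bPsi_k|_{T_i}\tilde\bv)\cdot\bn\big|_e = \tilde\bv|_{\tilde T_i}\cdot\bn\big|_{\tilde e},
\]
and since $\tilde\bv \in \bH^1_0(\tilde\Omega_h)$ is continuous across $\tilde e$, the two traces agree. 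This $H(\dive)$-conformity implies $\dive\bv \in L^2(\Omega_h)$.

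For the vanishing normal trace on $\partial\Omega_h$, I would argue the stronger statement that $\bv \equiv 0$ on each boundary edge $e\subset\partial T$. Since $\tilde\bv = 0$ on $\partial\tilde\Omega_h$ and $\tilde\bv \in \tilde\bV_k^h$ is continuous at its nodes, we have $\tilde\bv(\tilde a) = 0$ at every node $\tilde a \in \calN_k(\tilde T)$ lying on $\partial\tilde\Omega_h$. By the definition of $\bPsi_k$ this forces $\bv(a) = 0$ at the corresponding $k+1$ nodes $a = G_h(\tilde a)$ on the closure of $e$, and these nodes are images under $F_T$ of the $k+1$ Gauss--Lobatto points on the straight reference edge $\hat e$. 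Writing $\bv = (A_T\hat\bv)\circ F_T^{-1}$, since $A_T$ is invertible, vanishing of $\bv$ at these $k+1$ nodes is equivalent to $\hat\bv$ vanishing at the corresponding $k+1$ points of $\hat e$. But $\hat\bv|_{\hat e}$ is a vector-valued polynomial of degree $k$ on a straight segment, and hence $\hat\bv \equiv 0$ on $\hat e$, whence $\bv \equiv 0$ on $e$. Taking the union over boundary edges gives $\bv\cdot\bn = 0$ on $\partial\Omega_h$.

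The main subtlety to track is the distinction between straight and curved edges: interior edges must be proved straight before Proposition \ref{psiUps}(2) can be invoked for normal continuity, while the boundary argument has to accommodate possibly curved $e$ by working on the reference edge $\hat e$ where the trace is genuinely polynomial and the Gauss--Lobatto placement of the boundary nodes forces identical vanishing.
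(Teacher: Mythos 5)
Your proposal is correct and follows essentially the same route as the paper, which derives the result from the construction of $\bV_k^h$ together with the normal-trace identity in part 2 of Proposition \ref{psiUps} applied on the (straight) interior edges, deferring details to \cite[Theorem 4.2]{NeilanOtus21}. Your write-up simply fills in those details — in particular the observation that interior edges remain straight under $G_h$ and the Gauss--Lobatto/polynomial-degree count forcing $\hat\bv\equiv 0$ on boundary edges — consistently with the paper's framework.
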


This result follows immediately from the construction of $\bV_k^h$ and the continuity of the normal component on interior edges imposed by part 2 of Proposition \ref{psiUps}. 
See \cite[Theorem 4.2]{NeilanOtus21} for details.
\begin{lemma}\label{interp}
There exists an operator $\bI_k^h:\bH^2(\Omega)\cap \bH^1_0(\Omega_h)\to \bV_k^h$
such that for $\bu\in \bH^s(\Omega_h)\cap \bH^1_0(\Omega_h)\ (s\ge 2)$ and for each $T\in \calT_h$, 
there holds
\begin{equation}\label{eqn:interpBounds}
    \|\bu - \bI_k^h \bu\|_{H^m(T)}\leq C h_T^{\ell-m}\|\bu \|_{H^\ell(T)} \quad 0\le m\le \ell:=\min\{k+1,s\}.
\end{equation}
\end{lemma}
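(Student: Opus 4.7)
The plan is to construct $\bI_k^h$ elementwise via the Piola transform and nodal interpolation. On each macroelement $T \in \calT_h$, I would define $(\bI_k^h\bu)|_T \in \bV_k(T)$ as the unique element whose nodal values at $\calN_k(T)$ agree with $\bu$, i.e., $(\bI_k^h\bu)(a) = \bu(a)$ for every $a \in \calN_k(T)$. This is well-defined because $\bu \in \bH^s(\Omega_h)$ with $s\geq 2$ embeds into $\bC^0(\bar\Omega_h)$ in two dimensions, and any element of $\bV_k(T)$ is uniquely determined by its nodal values at $\calN_k(T)$ (invertibility of $A_T$ plus unisolvence of the reference Lagrange space). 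The local interpolants assemble into a single function in $\bV_k^h$ because the nodal values of $\bu$ are single-valued across elements, and $\bI_k^h\bu|_{\p\Omega_h}=0$ since the boundary nodes lie on $\p\Omega_h$ where $\bu$ vanishes.

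To prepare for the approximation estimate, I would first reformulate the local interpolation through the reference element: setting $\hat\bu(\hat x) = A_T^{-1}(\hat x)\bu(F_T(\hat x))$ and letting $\hat I_k:\bC^0(\hat T)\to \hat\bV_k$ be the standard Lagrange interpolation onto the reference Clough--Tocher space, one checks directly at nodes that $(\bI_k^h\bu)(x) = A_T(\hat x)(\hat I_k\hat\bu)(\hat x)$. The approximation argument then unfolds in three stages. First, apply the classical Bramble--Hilbert lemma on $\hat T$: since $\hat I_k$ preserves polynomials of degree at most $k$, for any $\hat K \in \hat T^{ct}$ and $0\leq m\leq \ell\leq k+1$ (with $\ell\geq 2$)
\begin{equation*}
\|\hat\bu - \hat I_k\hat\bu\|_{H^m(\hat K)} \leq C\sum_{r=2}^{\ell}|\hat\bu|_{H^r(\hat T)}.
\end{equation*}
Second, bound $|\hat\bu|_{H^r(\hat T)}$ in terms of $|\bu|_{H^j(T)}$ using the product/chain rule with the $W^{r,\infty}$-bounds on $A_T^{-1}$ from Lemma~\ref{defAT} and the bounds on $F_T$ from \eqref{eqn:FTBounds}, combined with the second inequality of Lemma~\ref{scalingBernardi}. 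Third, transfer the error estimate back to $T$: writing $(\bu - \bI_k^h\bu)(x) = A_T(\hat x)(\hat\bu - \hat I_k\hat\bu)(\hat x)$, expand derivatives by the product rule using $|A_T|_{W^{r,\infty}(\hat T)} \leq Ch_T^{r-1}$ from Lemma~\ref{defAT}, then pull forward to $K\in T^{ct}$ via the first inequality of Lemma~\ref{scalingBernardi}. Finally, sum over $K\in T^{ct}$ to obtain the bound on $T$.

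The main obstacle, and the step requiring careful bookkeeping, is tracking the powers of $h_T$ through these three transformations. The Piola factor $A_T$ carries the ``anomalous'' scaling $|A_T|_{W^{r,\infty}(\hat T)}\lesssim h_T^{r-1}$, so each additional derivative on $A_T$ \emph{loses} a power of $h_T$; this must be compensated exactly by the $h_T^{r-j}$ weights in Lemma~\ref{scalingBernardi} and the corresponding weights from bounding $\hat\bu$ in terms of $\bu$. A clean organization is to fix once and for all that the composition with $F_T$ contributes a factor $h_T^r$ per order-$r$ derivative (from \eqref{eqn:FTBounds}), while $A_T^{\pm 1}$ each contribute an $h_T^{\pm 1}$ in the overall norm; adding these bookkeeping exponents and collecting the Jacobian factor $h_T^{\pm 1}$ from $|K|^{1/2}\simeq h_T$ in Lemma~\ref{scalingBernardi} produces the expected $h_T^{\ell-m}$ scaling. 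The remaining step, summation over the three subtriangles in $T^{ct}$, introduces only a constant since $|T|\simeq|K|$ for $K\in T^{ct}$.
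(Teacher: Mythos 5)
Your construction and overall strategy coincide with the paper's: define $\bI_k^h$ by nodal interpolation at $\calN_k(T)$, observe that the Piola pullback $\hat\bu = A_T^{-1}(\bu\circ F_T)$ turns this into standard Lagrange interpolation of $\hat\bu$ on the reference Clough--Tocher split, apply a Bramble--Hilbert argument on $\hat T$, and undo the two transformations using Lemmas \ref{defAT} and \ref{scalingBernardi}. The well-definedness and conformity remarks you add (Sobolev embedding of $\bH^s$ into continuous functions, single-valuedness of the nodal values across elements) are correct and are simply left implicit in the paper.

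There is, however, one step that as written does not deliver the claimed rate. You state the reference-element estimate as
\[
\|\hat\bu - \hat I_k\hat\bu\|_{H^m(\hat K)} \le C\sum_{r=2}^{\ell}|\hat\bu|_{H^r(\hat T)}.
\]
This inequality is true but too weak: after the scaling $|\hat\bu|_{H^r(\hat T)}\le C h_T^{r}\|\bu\|_{H^r(T)}$ and the factor of order $h_T^{-m}$ incurred in pushing the error forward to $T$, the right-hand side becomes $C\sum_{r=2}^{\ell}h_T^{r-m}\|\bu\|_{H^r(T)}$, whose dominant term is $h_T^{2-m}\|\bu\|_{H^2(T)}$ rather than $h_T^{\ell-m}\|\bu\|_{H^\ell(T)}$; lower-order norms cannot be absorbed into the top one at the cost of positive powers of $h_T$. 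What you need --- and what the Bramble--Hilbert/Deny--Lions lemma actually gives, precisely because $\hat I_k$ reproduces $\pol_{\ell-1}(\hat K)\subseteq\pol_k(\hat K)$ and point evaluation is bounded on $H^\ell(\hat K)$ for $\ell\ge 2$ in two dimensions --- is the sharp form $\|\hat\bu - \hat I_k\hat\bu\|_{H^m(\hat K)}\le C|\hat\bu|_{H^\ell(\hat K)}$ with only the top seminorm; this is exactly \eqref{interpBound} in the paper. With that single seminorm your bookkeeping closes as in the paper: $|\hat\bu|_{H^\ell(\hat T)} = |A_T^{-1}A_T\hat\bu|_{H^\ell(\hat T)}\le C\sum_{j=0}^{\ell}h_T^{1+j}\, h_T^{\ell-j-1}\|\bu\|_{H^{\ell-j}(T)}\le Ch_T^{\ell}\|\bu\|_{H^\ell(T)}$, which combined with the $h_T^{-m}$ from the push-forward yields \eqref{eqn:interpBounds}. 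The repair is one line, but the estimate as you wrote it would only prove a second-order rate.
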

\begin{proof}
Recall $\mathcal{N}_k(T)$ and $\hat\calN_{k}$ 
are the sets of nodes on $T$ and $\hat{T}$, respectively. 
We uniquely define the operator $\bI_k^h$ such that on each $T\in \calT_h$, 
 \begin{equation*}
 (\bI_k^h \bu)|_T(a) = \bu(a) \quad \forall a \in \mathcal{N}_k(T).
 \end{equation*}
Set $\bv = \bI_k^h \bu|_T \in \bV_k(T)$, for $T\in\mathcal{T}_h$. Then set $\hat \bv\in \hat \bV_k$ and $\hat \bu\in \bH^s(\hat T)$ such that
    \begin{equation*}
        \bv(x) = (A_T \hat{\bv})(\hat{x}), \quad \bu(x) = (A_T \hat{\bu})(\hat{x}).
    \end{equation*}
    Consequently,
    \begin{equation*}
        (A_T\hat{\bv})(\hat{a}) = (A_T\hat{\bu})(\hat{a}) 
        \qquad \Longrightarrow\qquad \hat{\bv}(\hat{a}) = \hat{\bu}(\hat{a}) \quad \forall \hat{a} \in \hat\calN_k
    \end{equation*}
because the matrix $A_T$ is invertible.
Thus, $\hat{\bv}$ is the $k$th degree nodal interpolant of $\hat{\bu}$ with respect to $\hat{T}^{ct}$,
so, by standard interpolation theory, we have
\begin{equation}\label{interpBound}
    \|\hat{\bu} - \hat{\bv}\|_{H^m(\hat{T})} \leq C\vert \hat{\bu}\vert_{H^\ell(\hat{T})}\quad 0\le m\le \ell=\min\{k+1,s\}.
\end{equation}

Thus it follows from \eqref{interpBound}, Lemmas \ref{defAT} and \ref{scalingBernardi}, and an application of the product rule that
\begin{alignat*}{1}
\vert \bu - \bv \vert_{H^{m}(T)} 
\leq&C h_T^{1-m}  \| A_T \|_{W^{m,\infty}(\hat{T})} \|\hat{\bu}-\hat{\bv}\|_{H^m(\hat{T})} 
\leq C h_T^{-m}\vert \hat{\bu}\vert_{H^\ell(\hat{T})},
\end{alignat*}
and so, by using Lemmas \ref{defAT} and \ref{scalingBernardi} once again,
\begin{align*}
\vert \bu - \bv \vert_{H^{m}(T)} 
\le & C h_T^{-m}\vert A_T^{-1}A_T\hat{\bu}\vert_{H^\ell(\hat{T})} \\
\leq& C h_T^{-m} \sum_{j=0}^\ell \vert A_T^{-1} \vert_{W^{j,\infty}(\hat{T})} \vert A_T\hat{\bu}\vert_{H^{\ell-j}(\hat{T})}  \\
%
\leq& C h_T^{-m} \sum_{j=0}^\ell h_T^{1+j} \vert A_T\hat{\bu}\vert_{H^{\ell-j}(\hat{T})}  \\
%
\leq& C h_T^{\ell-m}\|\bu\|_{H^\ell(T)}.
\end{align*}
\end{proof}

\subsection{Weak continuity properties}\label{weakContinuity}

The next result shows
that, while functions in $\bV^h_k$ are only
$\bH_0({\rm div};\Omega_h)$-conforming,
they do have weak continuity properties
across interior edges of the mesh. 
In particular, they are ``close'' to an $\bH^1_0(\Omega_h)$-conforming relative.  The lemma
is a generalization of \cite[Lemma 4.5]{NeilanOtus21}
to general polynomial degree
and to higher-order Sobolev norms;
its proof is given in Appendix \ref{Ap:ProofEh}.
\begin{lemma}\label{Eh}
There exists an operator $\bE_h : \bV^h_k \to \bH_0^1(\Omega_h)$ such that for all $\bv \in \bV^h_k$
\begin{equation}\label{EhBound}
\|\bv - \bE_h\bv\|_{L^2(T)} + h_T \|\nabla(\bv - \bE_h \bv)\|_{L^2(T)}\leq C h_T^{m+1}\|\bv\|_{H^m(T)}\qquad \forall T\in \calT_h,
\end{equation}
for $m=0,1,\ldots,k$. Moreover, $\bE_h \bv|_T = \bv$
if $T$ is affine.
\end{lemma}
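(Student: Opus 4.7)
The plan is to define $\bE_h$ by pure composition (rather than the Piola transform that defines $\bV_k^h$), and then estimate the discrepancy using classical interpolation-error bounds on the reference element. Given $\bv = \bPsi_k \tilde\bv$ for some $\tilde \bv \in \tilde\bV_k^h$, I would set $\bE_h \bv := \tilde \bv \circ G_h^{-1}$, so equivalently $\bE_h \bv(x) = \hat \bv(F_T^{-1}(x))$ on each $T \in \calT_h$, where $\hat\bv \in \hat \bV_k$ is the reference image of $\tilde \bv$ via $F_{\tilde T}$. Since $G_h$ restricts to the identity on every interior edge of $\tilde\calT_h$ and $\tilde \bv \in \bH^1_0(\tilde\Omega_h)$, traces across interior edges of $\calT_h$ match and $\bE_h \bv$ vanishes on $\partial \Omega_h$, so $\bE_h \bv \in \bH^1_0(\Omega_h)$. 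On an affine $T$, $F_T = F_{\tilde T}$ and hence $A_T$ is constant; combining the nodal identity $\bv(a) = \hat \bv(\hat a)$ with the Piola representation $\bv(x) = A_T(\hat x)\hat \bv^*(\hat x)$ shows that $A_T^{-1}\hat \bv$ already belongs to $\hat \bV_k$, so $\hat \bv^* = A_T^{-1}\hat\bv$ and $\bv = \hat \bv = \bE_h \bv$.

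For non-affine $T$, the same representation gives, with $\hat \bz := A_T^{-1} \hat \bv$,
\[
(\bv - \bE_h \bv)\circ F_T = A_T\bigl(I_{\hat T}\hat \bz - \hat \bz\bigr),
\]
where $I_{\hat T}$ denotes the degree-$k$ Lagrange nodal interpolant on $\hat T^{ct}$. A Bramble-Hilbert argument on each $\hat K \in \hat T^{ct}$ gives $\|I_{\hat T}\hat \bz - \hat \bz\|_{H^j(\hat K)} \le C|\hat \bz|_{H^{k+1}(\hat K)}$ for $j \in \{0,1\}$. Expanding $|\hat \bz|_{H^{k+1}}$ by the product rule and inserting the bounds $|A_T^{-1}|_{W^{j,\infty}(\hat T)}\lesssim h_T^{1+j}$ from Lemma \ref{defAT} (which vanish for $j\ge k$), together with the fact that $\hat \bv|_{\hat K}$ is polynomial of degree $\le k$, forces the extreme terms to drop and yields
\[
|\hat \bz|_{H^{k+1}(\hat K)} \le C \sum_{j=1}^{k-1} h_T^{1+j}\,|\hat \bv|_{H^{k+1-j}(\hat K)}.
\]

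The factors $|\hat \bv|_{H^{k+1-j}(\hat K)}$ are then transferred to $T$: since $\hat \bv$ is the Lagrange interpolant of $\bv\circ F_T$, nodal-interpolation stability on the finite-dimensional space $\hat \bV_k$ (via the Sobolev embedding $H^k \hookrightarrow C^0$, valid for $k\ge 2$ in two dimensions) gives $\|\hat \bv\|_{H^k(\hat K)} \lesssim \|\bv\circ F_T\|_{H^k(\hat K)} \lesssim h_T^{k-1}\|\bv\|_{H^k(K)}$ by Lemma \ref{scalingBernardi}. Combining with $\|A_T\|_{L^\infty(\hat T)}\lesssim h_T^{-1}$, the Jacobian $\det DF_T \sim h_T^2$, and the chain-rule adjustments needed for the $H^1$-seminorm on $T$, this produces \eqref{EhBound} at $m = k$. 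The remaining cases $m = 0, \ldots, k-1$ then follow by combining the $m = k$ estimate with the inverse inequality $\|\bv\|_{H^k(T)} \lesssim h_T^{-(k-m)}\|\bv\|_{H^m(T)}$ of Lemma \ref{lem:InvI}. The principal obstacle I anticipate is the careful bookkeeping through the product-rule, Bramble-Hilbert, and scaling steps: the interpolation-stability bound on $\hat \bV_k$ must be invoked at precisely the right Sobolev level so that the accumulated powers of $h_T$ agree exactly with $h_T^{k+1}$, and one must take care to distinguish the composition-image $\hat \bv$ of $\tilde \bv$ from the Piola-image $\hat \bv^*$ of $\bv$, since the two play asymmetric roles in the scaling chain.
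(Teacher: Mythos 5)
Your proposal is correct and takes essentially the same route as the paper's proof in Appendix \ref{Ap:ProofEh}: the same operator $\bE_h \bv|_T = \tilde\bv\circ F_{\tilde T}\circ F_T^{-1}$, a Bramble--Hilbert bound for the nodal interpolation error on each $\hat K\in\hat T^{ct}$, the product rule combined with the bounds on $A_T$ and $A_T^{-1}$ from Lemma \ref{defAT} (whose vanishing for orders $\ge k$ kills the extreme terms), the scaling relations of Lemma \ref{scalingBernardi}, and finally the inverse inequality of Lemma \ref{lem:InvI} to pass from $m=k$ to general $m$. The only difference is cosmetic bookkeeping: you apply Bramble--Hilbert to $\hat\bz=A_T^{-1}\hat\bv$ and then multiply by $A_T$, whereas the paper views the composition image of $\bE_h\bv$ as the interpolant of $A_T$ times the Piola image of $\bv$ and applies Bramble--Hilbert to that product directly; the two identities are mirror images and produce identical powers of $h_T$.
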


\begin{corollary}\label{poincare}
For $\bv \in \bV^h$, it holds
\begin{equation*}
\|\bv\|_{L^2(\Omega_h)} \leq C\|\nabla_h \bv\|_{L^2(\Omega_h)},
\end{equation*}
where $\nab_h$ denotes the piecewise gradient operator
with respect to $\calT_h$, and $C>0$ is a constant depending only on the size of $\Omega_h$
and the shape regularity of $\calT_h$.
\end{corollary}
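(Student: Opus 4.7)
The plan is to reduce to the classical Poincaré--Friedrichs inequality on $\bH_0^1(\Omega_h)$ by using the enriching operator $\bE_h$ from Lemma \ref{Eh} to transfer $\bv\in \bV^h_k$ to an $\bH^1_0$-conforming function $\bw:=\bE_h\bv$, and then to control the consistency errors $\bv-\bw$ and $\nab(\bv-\bw)$ using the two different choices $m=0$ and $m=1$ in \eqref{EhBound}. The standard Poincaré inequality gives $\|\bw\|_{L^2(\Omega_h)}\le C_P\|\nab\bw\|_{L^2(\Omega_h)}$, with $C_P$ depending only on the (uniformly bounded) diameter of $\Omega_h$. The triangle inequality then yields
\begin{equation*}
\|\bv\|_{L^2(\Omega_h)}\le \|\bv-\bw\|_{L^2(\Omega_h)}+\|\bw\|_{L^2(\Omega_h)}\le \|\bv-\bw\|_{L^2(\Omega_h)}+C_P\|\nab_h(\bv-\bw)\|_{L^2(\Omega_h)}+C_P\|\nab_h\bv\|_{L^2(\Omega_h)}.
\end{equation*}

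For the first term I would take $m=0$ in Lemma \ref{Eh} on each $T\in\calT_h$, which gives the element-wise bound $\|\bv-\bw\|_{L^2(T)}\le Ch_T\|\bv\|_{L^2(T)}\le Ch\|\bv\|_{L^2(T)}$, and summing produces $\|\bv-\bw\|_{L^2(\Omega_h)}\le Ch\|\bv\|_{L^2(\Omega_h)}$. For the second term I would take $m=1$ in Lemma \ref{Eh}, which yields $\|\nab(\bv-\bw)\|_{L^2(T)}\le Ch_T\|\bv\|_{H^1(T)}\le Ch\bigl(\|\bv\|_{L^2(T)}+\|\nab\bv\|_{L^2(T)}\bigr)$. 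Summing over $T\in\calT_h$ gives
\begin{equation*}
\|\nab_h(\bv-\bw)\|_{L^2(\Omega_h)}\le Ch\bigl(\|\bv\|_{L^2(\Omega_h)}+\|\nab_h\bv\|_{L^2(\Omega_h)}\bigr).
\end{equation*}

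Plugging these two estimates back into the displayed triangle inequality gives
\begin{equation*}
\|\bv\|_{L^2(\Omega_h)}\le \bigl(Ch+C_PCh\bigr)\|\bv\|_{L^2(\Omega_h)}+C_P(1+Ch)\|\nab_h\bv\|_{L^2(\Omega_h)},
\end{equation*}
and for $h$ sufficiently small the coefficient of $\|\bv\|_{L^2(\Omega_h)}$ on the right is strictly less than $1/2$, so that term can be absorbed into the left, yielding the claimed inequality. No step is really an obstacle, since Lemma \ref{Eh} does the heavy lifting; the only subtlety is that one must use the $m=0$ bound for the $L^2$ consistency term (to gain a full power of $h$ and allow absorption) while simultaneously using the $m=1$ bound for the gradient consistency term (where the $m=0$ bound would only give $O(1)$ and be useless). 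This mismatch is the reason Lemma \ref{Eh} is stated for the full range $m=0,\ldots,k$ rather than only for $m=0$.
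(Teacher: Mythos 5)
Your proof is correct, but it handles the consistency terms differently from the paper, and the difference is worth noting. The paper's proof also passes through $\bE_h\bv$ and the conforming Poincar\'e inequality, but it exploits two additional facts: (i) $\bE_h\bv=\bv$ on every affine element, so the error $\bv-\bE_h\bv$ is supported only on the boundary-layer elements $\calT_h^\partial$; and (ii) on such an element $\bv$ vanishes on the edge lying in $\partial\Omega_h$, so a scaled local Friedrichs inequality gives $\|\bv\|_{L^2(T)}\le Ch_T\|\nabla\bv\|_{L^2(T)}$. Feeding this into the $m=0$ case of Lemma \ref{Eh} (used twice, once for the $L^2$ error and once for the gradient error) bounds everything directly by $\|\nabla_h\bv\|_{L^2(\Omega_h)}$ with no absorption step, hence with no smallness condition on $h$ entering the argument at this point. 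Your route instead bounds the consistency terms globally by $Ch\|\bv\|_{L^2(\Omega_h)}$ plus gradient terms and then absorbs, which is legitimate (the paper does assume $h$ sufficiently small throughout) but strictly weaker in that it only yields the inequality below some threshold $h_0$ determined by the constants in Lemma \ref{Eh} and the Poincar\'e constant. One small correction to your closing remark: the paper's own proof uses only the $m=0$ case of Lemma \ref{Eh}, so the availability of $m=1$ is not the reason the lemma is stated for $m=0,\dots,k$; the higher values of $m$ are needed later, e.g.\ in the $L^2$ error analysis of Section \ref{l2est} where the estimate is invoked with $m=\ell-1$.
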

\begin{proof}

Recall that $\bv = \bE_h \bv$ on affine triangles, so $\|\bv - \bE_h\|_{L^2(T)}$ may only be
nonzero on curved $T\in\mathcal{T}_h$, all of which will have at least two vertices on the boundary. We  denote the set of triangles with two boundary vertices as $\mathcal{T}_h^{\partial}$ so that
$\bv|_T= \bE_h \bv|_T$ for $T\in \calT_h\backslash \calT_h^\partial$.
Because $\bv|_{\p \Omega_h} = 0$, we have $\|\bv\|_{L^2(T)}\le C h_T \|\nab \bv\|_{L^2(T)}$ for $T\in \calT_h^\p$.

Thus, recalling that $\bE_h\bv \in \bH_0^1(\Omega_h)$, we may apply the triangle inequality, Lemma \ref{Eh} (twice with $m=0$), and the Poincar\'e inequality (twice) to determine
\begin{align*}
\|\bv\|_{L^2(\Omega_h)}^2 \leq
& 2\left(\|\bE_h \bv\|_{L^2(\Omega_h)}^2+ \sum_{T \in \mathcal{T}_h^{\partial}} \|\bv - \bE_h \bv\|_{L^2(T)}^2 \right) \\
\leq& C \left( \|\nab \bE_h \bv\|_{L^2(\Omega_h)}^2+ \sum_{T \in \mathcal{T}_h^{\partial}} h_T^2 \|\nabla\bv\|_{L^2(T)}^2 \right) \\
\leq& C \left( \|\nab_h  \bv\|_{L^2(\Omega_h)}^2+ \sum_{T \in \mathcal{T}_h^{\partial}} \|\nabla(\bv-\bE_h \bv) \|_{L^2(T)}^2 \right) \\
 \leq& C\|\nabla_h \bv\|_{L^2(\Omega_h)}^2. 
\end{align*}
\end{proof}

Using the $H^1$-conforming relative in Lemma \ref{Eh}
and the fact that the Lagrange DOFs
are Gauss-Lobatto nodes,
we show that functions in $\bV_k^h$
possess weak continuity properties across interior edges.
To describe the result, we set $\calE_h^I$ to denote
the set of interior edges of $\calT_h$, and define the jump
of a vector-valued function across an edge $e = \p T_+\cap \p T_-\in \calE_h^I\  (T_{\pm}\in \calT_h)$ as
\[
[\bv]|_e  = \bv_+\otimes \bn_+|_e + \bv_-\otimes \bn_-|_e,
\]
where  $\bv_{\pm} = \bv|_{T_{\pm}}$ and $\bn_{\pm}$ is the outward unit normal
of $\p T_{\pm}$ restricted to $e$.
\begin{lemma}\label{jumpBound}
Let $\bw\in \bH^s(\Omega)$ with $s\ge 2$,
and set $r = \min\{s-1,k-1\}$. We extend 
$\bw$ to $\mathbb{R}^2$ such that
$\|\bw\|_{H^{r+1}(\bbR^2)}\le C \|\bw\|_{H^{r+1}(\Omega)}$.
Then there holds for all 
$\bv \in \bV^h_k$, and $m=0,1,\ldots,k$,
\begin{equation}\label{eqn:jumpBound}
\bigg|\sum_{e\in \mathcal{E}_h^I} \int_e \nab \bw: [\bv] \bigg| \leq C h^{r+m} \|\bw\|_{H^{r+1}(\Omega)} \|\bv\|_{H^m_h(\Omega_h)}. \quad
\end{equation}
\end{lemma}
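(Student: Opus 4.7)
The plan is to combine three structural properties: the $\bH_0^1$-conforming companion $\bE_h\bv$ from Lemma \ref{Eh}, the $\bH_0(\mathrm{div};\Omega_h)$-conformity of $\bV_k^h$ which makes $(\bv_+-\bv_-)\cdot\bn$ vanish on each interior edge, and the placement of the edge DOFs at the Gauss-Lobatto nodes, which forces the jump $[\bv]$ to vanish at those nodes. Using the first two facts on any $e\in\calE_h^I$, together with continuity of $\bE_h\bv$, the integrand simplifies to
\[
\int_e \nab\bw:[\bv]\,ds = \int_e g\phi\,ds,\qquad g := (\nab\bw\,\bn)\cdot\bt,\quad \phi := (\bv_+-\bv_-)\cdot\bt,
\]
where $\phi = ((\bv-\bE_h\bv)_+ - (\bv-\bE_h\bv)_-)\cdot\bt$ is smooth on $\bar e$ and vanishes at the $k+1$ Gauss-Lobatto nodes.

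I then split $g = (g-\Pi_e g) + \Pi_e g$, where $\Pi_e$ denotes the $L^2(e)$-projection onto $\pol_{k-1}(e)$. For the non-polynomial part, Cauchy-Schwarz on $e$, a standard trace inequality applied to $\nab(\bw-q)$ for a suitable $q\in \pol_k(T)$, and polynomial approximation theory give $\|g-\Pi_e g\|_{L^2(e)}\le Ch_T^{r-1/2}\|\bw\|_{H^{r+1}(T)}$; trace together with Lemma \ref{Eh} yields $\|\phi\|_{L^2(e)}\le Ch_T^{m+1/2}(\|\bv\|_{H^m(T_+)}+\|\bv\|_{H^m(T_-)})$. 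The elementwise product $Ch_T^{r+m}\|\bw\|_{H^{r+1}(T)}\|\bv\|_{H^m}$, summed over edges via discrete Cauchy-Schwarz, reproduces the right-hand side of \eqref{eqn:jumpBound}.

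The remaining piece $\int_e(\Pi_e g)\phi$ is where Gauss-Lobatto is decisive. Because $\phi$ vanishes at the Gauss-Lobatto nodes and the Gauss-Lobatto rule $Q_{GL}$ on $e$ is exact on $\pol_{2k-1}(e)$, one has $Q_{GL}((\Pi_e g)\phi)=0$; hence $\int_e(\Pi_e g)\phi$ equals the Gauss-Lobatto quadrature error $E_{GL}((\Pi_e g)\phi)$. Since $E_{GL}$ annihilates $\pol_{2k-1}(e)$, a Bramble-Hilbert argument in the form $E_{GL}((\Pi_e g)\phi) = E_{GL}((\Pi_e g)(\phi - \tilde\phi))$ for any $\tilde\phi\in\pol_k(e)$, combined with inverse estimates on the polynomial factor $\Pi_e g$ and Sobolev approximation for $\phi - \tilde\phi$, should deliver the same order $h^{r+m}$ contribution.

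The main obstacle is exactly this last step. Although interior edges are straight and $G_h$ is the identity on them, the matrix $A_T$ is generally non-constant along interior edges (its normal derivative, inherited from $F_T$, is nontrivial), so $\bv\cdot\bt$ is a smooth rational function of arc length rather than a polynomial. Consequently, polynomial exactness of $Q_{GL}$ cannot be invoked directly on $(\Pi_e g)\phi$. The correct $h$-dependence of the quadrature error must be extracted by pulling $\phi$ back to $\hat e$, where $\hat\bv$ is polynomial of degree $\le k$, and estimating derivatives of $A_T$ via Lemma \ref{defAT} together with the norm-equivalence estimates of Lemma \ref{lem:InvI}. This careful bookkeeping to match the powers of $h$ on both sides of \eqref{eqn:jumpBound} is the main technical work of the proof.
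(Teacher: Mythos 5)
Your decomposition is essentially the paper's: you split $\nab\bw$ into a smooth remainder plus an edgewise polynomial projection, pair the remainder with $[\bv]=[\bv-\bE_h\bv]$ and estimate it by trace inequalities, Lemma \ref{Eh}, and polynomial approximation (this part of your argument is correct and matches the paper's bound of $I_1$, up to the cosmetic difference that you reduce to the tangential component $g\phi$ while the paper keeps the full tensor product, and that you project onto $\pol_{k-1}(e)$ where the paper uses a $(k-2)$-degree volumetric projection $G_e$ with $G_e|_e\in[\pol_{k-2}(e)]^{2\times2}$). You also correctly identify that the remaining term must be treated as a Gauss--Lobatto quadrature error, since $[\bv]$ vanishes at the Gauss--Lobatto nodes.

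However, the proof of that remaining term --- which is the actual point of the lemma and the reason the Gauss--Lobatto placement matters --- is not carried out: you explicitly flag the fact that $\bv$ is not a polynomial on $e$ as ``the main technical work'' and stop there. This is a genuine gap, because the whole estimate hinges on it. The paper's resolution is concrete: the Peano-kernel bound $\bigl|\int_e f - Q_{GL}(f)\bigr|\le C|e|^{2k+1}|f|_{W^{2k,\infty}(e)}$ is valid for any smooth $f$, polynomial or not, so one applies it directly to $f=[\bv]:G_e$ and obtains $\bigl|\int_e [\bv]:G_e\bigr|\le Ch_T^{2k+1}\bigl(\|\bv\|_{W^{2k,\infty}(K_+)}+\|\bv\|_{W^{2k,\infty}(K_-)}\bigr)\|G_e\|_{W^{k-2,\infty}(e)}$ by Leibniz. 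The non-polynomial nature of $\bv$ is then tamed precisely by the inverse estimate \eqref{eqn:InvI2}, $\|\bv\|_{W^{\ell,p}(K)}\le C\|\bv\|_{W^{k,p}(K)}$ for $\ell\ge k$, which collapses $\|\bv\|_{W^{2k,\infty}(K_\pm)}$ down to $\|\bv\|_{H^k(K_\pm)}$ (after the $L^\infty$-to-$L^2$ inverse estimate \eqref{eqn:InvI1}), and then further to $h_T^{m-k}\|\bv\|_{H^m(K_\pm)}$; the factor $\|G_e\|_{W^{k-2,\infty}(e)}$ is bounded by $Ch_T^{r-k}\|\bw\|_{H^{r+1}(T_+\cup T_-)}$ via an inverse/trace estimate combined with \eqref{eqn:GeApprox}, treated in two cases depending on whether $k-2\le r$ or $r\le k-2$. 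Multiplying out gives exactly $h_T^{r+m}$. Your proposed Bramble--Hilbert route ($E_{GL}((\Pi_e g)(\phi-\tilde\phi))$ for $\tilde\phi\in\pol_k(e)$) could in principle be made to work, but it runs into the same difficulty of controlling high-order derivatives of the non-polynomial $\phi$ on $e$, so it does not avoid the missing ingredient --- it merely relocates it. Without invoking \eqref{eqn:InvI2} (or an equivalent statement that derivatives of order $>k$ of functions in $\bV_k(T)$ are controlled by the $k$th-order norm), the argument does not close.
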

\begin{proof}
For $e\in \mathcal{E}_h^I$, let $T_+,T_-\in \calT_h$
such that $e = \p T_+\cap \p T_-$.
We let $G_e\in [H^1(T_+\cup T_-)]^{2\times 2}$
such that $G_e|_{T_\pm}\circ F_{T_\pm}\in [\pol_{k-2}(\hat T)]^{2\times 2}$ and 
\[
\int_{T_+\cup T_-} G_e:Q = \int_{T_+\cup T_-} \nab \bw:Q 
\]
for all $Q\in [H^1(T_+\cup T_-)]^{2\times 2}$ 
with $Q|_{T_\pm}\circ F_{T_\pm}\in [\pol_{k-2}(\hat T)]^{2\times 2}$.
That is $G_e$ is the $L^2(T_+\cup T_-)$ projection of $\nab \bw$
with respect to the local $(k-2)$-degree Lagrange (isoparametric) finite element space.
Note that because $F_{T_{\pm}}$ is affine on the interior edge $e$, 
there holds $G_e|_e\in [\pol_{k-2}(e)]^{2\times 2}$.
We also have by standard approximation theory,
\begin{align}\label{eqn:GeApprox}
    \|\nab \bw-G_e\|_{H^m(T_{\pm})}\le C h^{r-m}_{T}\|\nab \bw\|_{H^{r}(T_+\cup T_-)}\le C h^{r-m}_T\|\bw\|_{H^{r+1}(T_+\cup T_-)}\quad m=0,1,\ldots,r,
\end{align}
where $h_T = \max\{h_{T_+},h_{T_-}\}$.
Thus, by a trace inequality, 
\begin{align}\label{eqn:GeApprox2}
 \|\nab \bw-G_e\|_{L^2(e)}\le C h_{T}^{r-1/2} \|\bw\|_{H^{r+1}(T_+\cup T_-)}.
\end{align}
We then write
\begin{equation}\label{eqn:BreakUp}
\begin{split}
\bigg|\sum_{e\in \mathcal{E}_h^I} \int_e \nab \bw: [\bv] \bigg| 
&\le \bigg|\sum_{e\in \mathcal{E}_h^I} \int_e (\nab \bw-G_e): [\bv-\bE_h \bv] \bigg| 
+ \bigg|\sum_{e\in \mathcal{E}_h^I} \int_e G_e: [\bv] \bigg|\\
&=:I_1+I_2.
\end{split}
\end{equation}
To estimate $I_1$, we use \eqref{eqn:GeApprox2},
Lemma \ref{Eh}, and a trace inequality:
\begin{equation}\label{eqn:I1Bound}
\begin{split}
    I_1
    &\le \left(\sum_{e\in \mathcal{E}_h^I} h_e \|\nab \bw-G_e\|_{L^2(e)}^2\right)^{1/2}
    \left(\sum_{e\in \mathcal{E}_h^I} h_e^{-1} \|\bv-\bE_h \bv\|_{L^2(e)}^2\right)^{1/2}\\
    &\le C \left(\sum_{T\in \calT_h} h_T^{2r} \|\bw\|_{H^{r+1}(T)}^2\right)^{1/2}
     \left(\sum_{T\in \calT_h} h_T^{2m} \|\bv\|_{H^m(T)}^2\right)^{1/2}
     \le C h^{r+m}\|\bw\|_{H^{r+1}(\Omega)}\|\bv\|_{H^m_h(\Omega_h)}.
\end{split}
\end{equation}

To estimate $I_2$, we first observe that,
by construction, for $\bv \in \bV^h_k$ and $e\in \calE_h^I$, we have $[\bv]|_{e}(a) = 0$ for all $a\in \calN_k(T)$ with $a\in \bar e$.
Recalling that these edge degrees of freedom are placed
at Gauss-Lobatto nodes,
it follows from the error of the $(k+1)$-point Gauss-Lobatto rule and the fact that $G_e$ is a polynomial of degree 
$(k-2)$ on $e$ that
%
    \begin{equation}\label{simpson}
    \begin{split}
     \left| \int_e [\bv]:G_e \right| 
     &\leq C |e|^{2k+1} \left| [\bv]:G_e\right|_{W^{2k,\infty}(e)}\\
     &\leq C h_T^{2k+1} \bigg(\|\bv \|_{W^{2k,\infty}(K_+)} + \| \bv \|_{W^{2k,\infty}(K_-)}\bigg)\|G_e\|_{W^{k-2,\infty}(e)}\quad \forall e\in \calE_h^I,
     \end{split}
    \end{equation}
     where $K_{\pm} \in T_{\pm}^{ct}$ share edge $e$.

 Note that a standard inverse/trace estimate yields
 \begin{align}
     \|G_e\|_{W^{k-2,\infty}(e)}\le C h_{T}^{-1}\|G_e\|_{H^{k-2}(T_\pm)}.\label{invTrace}
     \end{align}
From here, we consider two cases:

\textit{Case 1: $k-2 \leq r$}. For this case, we recall that $r = \min\{s-1,k-1\}$, and so $r-k\le -1$. It therefore holds that we have $h^{-1}_T \leq h^{r-k}_T$. With this, \eqref{eqn:GeApprox}, 
and $(k-1)\le (r+1)$, we have
\begin{align*}
\|G_e\|_{W^{k-2,\infty}(e)}\le& C h_{T}^{-1}\big(\|G_e - \nabla \bw\|_{H^{k-2}(T_\pm)} + \|\bw \|_{H^{k-1}(T_\pm)}\big) \nonumber \\
\le& C h_{T}^{r -k} \|\bw \|_{H^{r + 1}(T_+\cup T_-)}.
\end{align*}

\textit{Case 2: $r\leq k-2$}. For the second case, we may apply another inverse estimate to \eqref{invTrace} before applying \eqref{eqn:GeApprox}. This yields
\begin{alignat*}{1}
\|G_e\|_{W^{k-2,\infty}(e)} 
\leq& Ch_{T}^{-1}h_{T_\pm}^{r-(k-2)}\|G_e\|_{H^r(T_\pm)} \\
\leq& Ch_{T}^{r-k+1}\big(\|G_e - \nabla \bw\|_{H^r(T_\pm)} + \|\bw\|_{H^{r+1}(T_\pm)}\big) \\
\leq& Ch_{T}^{r-k+1}\|\bw\|_{H^{r+1}(T_+\cup T_-)}.
\end{alignat*}

Consequently, we take the less sharp estimate in these cases in \eqref{simpson} 
and apply the inverse estimates \eqref{eqn:InvI1}-\eqref{eqn:InvI2} to $\|\bv\|_{W^{2k,\infty}(K_{\pm})}$ 
          to obtain
     \begin{align*}
      \left| \int_e [\bv]:G_e \right|   
      &\leq C h_T^{k+r} \bigg(\|\bv \|_{H^{2k}(K_+)} + \| \bv \|_{H^{2k}(K_-)}\bigg)\|\bw\|_{H^{r+1}(T_+\cup T_-)} \\
      &\le  C h_T^{k+r} \bigg(\| \bv \|_{H^{k}(K_+)} + \|\bv \|_{H^{k}(K_-)}\bigg)\|\bw\|_{H^{r+1}(T_+\cup T_-)} \\
      &\le C h_T^{r+m} \bigg(\| \bv \|_{H^{m}(K_+)} + \|\bv \|_{H^{m}(K_-)}\bigg)\|\bw\|_{H^{r+1}(T_+\cup T_-)}.
     \end{align*}

Summing this expression over $\mathcal{E}_h^I$
we obtain an upper bound for $I_2$:
\begin{align}\label{eqn:I2Bound}
    I_2\le C h^{r+m}\|\bw\|_{H^{r+1}(\Omega)} \|\bv\|_{H^m_h(\Omega_h)}.
\end{align}
Applying the estimates \eqref{eqn:I1Bound} and \eqref{eqn:I2Bound}
towards \eqref{eqn:BreakUp} yields the result.
   \end{proof}

\begin{remark} \label{remEq}
We note that the result above is not as sharp if Newton-Cotes (uniformly spaced) nodes are used instead of Gauss-Lobatto nodes. Indeed, Newton-Cotes integration on $m$ points is exact on polynomials in $\pol^m$, if $m$ is odd, and $\pol^{m-1}$ if $m$ is even, so the bound on the right-hand side of \eqref{simpson} becomes 
\begin{equation*}
    C h_T^{k+2} \bigg(\|\bv \|_{W^{k+1,\infty}(K_+)} + \| \bv \|_{W^{k+1,\infty}(K_-)}\bigg)\|G_e\|_{W^{k-2,\infty}(e)}\quad \forall e\in \calE_h^I,
\end{equation*}
if $k$ is odd, and
\begin{equation*}
C h_T^{k+3} \bigg(\|\bv \|_{W^{k+2,\infty}(K_+)} + \| \bv \|_{W^{k+2,\infty}(K_-)}\bigg)\|G_e\|_{W^{k-2,\infty}(e)}\quad \forall e\in \calE_h^I,
\end{equation*}
if $k$ is even. Thus, if we use equidistant points, the bound loses $k-1$ powers of $h$ if $k$ is odd, and $k-2$ if $k$ is even.
\end{remark}

\subsection{Inf-sup stability}

An inf-sup stability result for the finite element pair $\bV_k^h\times Q_k^h$ was proven in \cite[Theorem 4.4]{NeilanOtus21} in the case $k=2$.
The argument given there are essentially valid for all $k\ge 2$. Consequently, we only provide a sketch
of the proof in the general case.

\begin{theorem}\label{infsup}
    There holds
    \begin{equation}\label{eqn:InfSup}
        \sup_{\bv \in \bV^h_k\setminus \{0\}} \frac{\int_{\Omega_h} (\dive{\bv}) q}{\|\nabla_h \bv\|_{L^2(\Omega_h)}} \geq C\|q\|_{L^2(\Omega_h)} \quad \forall q \in Q^h_{k-1}.
    \end{equation}
\end{theorem}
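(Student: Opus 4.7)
The plan is to inherit inf-sup stability from the affine Scott--Vogelius pair $\tilde\bV_k^h\times \tilde Q_{k-1}^h$ on the Clough--Tocher split, which is stable for every $k\ge 2$ by the Arnold--Qin theorem, and transfer it to the curved mesh via the operators $\bPsi_k$ and $\Upsilon_{k-1}$. Because $\bPsi_k$ reduces to the identity on any element where $F_T$ is affine (Proposition \ref{psiUps}(1)), the isoparametric perturbation is supported on the boundary layer of curved triangles, which by \eqref{eqn:GeoError} occupies a strip of measure $O(h^{k+1})$. Consequently, we expect the curved inf-sup constant to differ from the affine one by an $O(h)$ correction that can be absorbed for $h$ small.

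Given $q\in Q_{k-1}^h$, I would define a pressure partner $\tilde q_0\in L^2(\tilde\Omega_h)$ piecewise by $\tilde q_0\circ F_{\tilde T}=q\circ F_T=:\hat q$, equivalently $\tilde q_0=q\circ G_h$. A change of variables together with $\int_{\Omega_h}q\,dx=0$ and \eqref{eqn:GeoError} controls the mean of $\tilde q_0$ on $\tilde\Omega_h$ by a positive power of $h$ times $\|q\|_{L^2(\Omega_h)}$, so subtracting this mean yields $\tilde q\in \tilde Q_{k-1}^h$ with $\|\tilde q\|_{L^2(\tilde\Omega_h)}\ge \tfrac{1}{2}\|q\|_{L^2(\Omega_h)}$ for $h$ sufficiently small. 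Applying the affine inf-sup to $\tilde q$ produces $\tilde\bv\in\tilde\bV_k^h$ with
\[
\int_{\tilde\Omega_h}\dive\tilde\bv\,\tilde q\,d\tilde x\ge C\|\tilde q\|_{L^2(\tilde\Omega_h)}^2,\qquad \|\nab\tilde\bv\|_{L^2(\tilde\Omega_h)}\le C\|\tilde q\|_{L^2(\tilde\Omega_h)}.
\]
Setting $\bv:=\bPsi_k\tilde\bv\in\bV_k^h$, Proposition \ref{psiUps}(3) yields $\|\nab_h\bv\|_{L^2(\Omega_h)}\le C\|q\|_{L^2(\Omega_h)}$.

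The crux is to establish the comparison
\[
\int_{\Omega_h}\dive\bv\,q\,dx=\int_{\tilde\Omega_h}\dive\tilde\bv\,\tilde q\,d\tilde x+R,\qquad |R|\le Ch\,\|\nab\tilde\bv\|_{L^2(\tilde\Omega_h)}\|\tilde q\|_{L^2(\tilde\Omega_h)}.
\]
On every affine element the two integrands coincide, so $R$ is supported on the curved boundary triangles. On each such $T=G_h(\tilde T)$, I would use the divergence-preserving property of the Piola transform to rewrite $\int_T\dive\bv\,q\,dx=\int_{\hat T}\hat\dive\hat\bv_{\rm curv}\,\hat q\,d\hat x$, where $\hat\bv_{\rm curv}\in\hat\bV_k$ is the reference preimage of $\bv$, and analogously express $\int_{\tilde T}\dive\tilde\bv\,\tilde q\,d\tilde x$ on $\hat T$ in terms of $\hat\bv_{\rm aff}$ (the reference preimage of $\tilde\bv$) modulo a constant Jacobian factor. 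By the very definition of $\bPsi_k$, $A_T\hat\bv_{\rm curv}$ and $\hat\bv_{\rm aff}$ agree at every node of $\hat\calN_k$; combining nodal-interpolation error on $\hat T$ with the bounds $|A_T|_{W^{m,\infty}(\hat T)}\le Ch_T^{m-1}$ of Lemma \ref{defAT} and the scaling of Lemma \ref{scalingBernardi}, an elementwise estimate of $|R|_T$ follows, and summation over the $O(h^{-1})$ boundary triangles yields the $O(h)$ bound above. Absorbing $R$ into the affine lower bound then establishes \eqref{eqn:InfSup}. The main obstacle is precisely this localized Piola-plus-interpolation comparison: it must exploit that $A_T$ depends on position only through the non-affine part of $F_T$, which is of order $h_T^k$ and concentrated near the curved edge, so that the $k$-dependent scaling in Lemma \ref{defAT} does not erode the $O(h)$ smallness expected from the thin geometric error layer.
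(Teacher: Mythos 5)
Your strategy---inherit the Arnold--Qin inf-sup from the affine pair $\tilde\bV_k^h\times\tilde Q_{k-1}^h$ and absorb an $O(h)$ perturbation induced by $\bPsi_k$---is genuinely different from the paper's. The paper runs a two-stage macroelement argument: it splits $q$ into a weighted elementwise mean $\bar q$ (chosen so that $(q-\bar q)\circ F_T\in\hat Q_{k-1,0}$) plus a locally mean-zero remainder, corrects the remainder element by element using the surjectivity of $\widehat{\rm div}:\hat\bV_{k,0}\to\hat Q_{k-1,0}$ on the reference Clough--Tocher split together with the Piola transform (so no comparison with the affine mesh is needed at that stage), and then controls $\bar q$ by the coarse inf-sup result of \cite{NeilanOtus21} on the curved mesh. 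That route yields an $h$-independent constant with no ``for $h$ sufficiently small'' absorption.

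Your route has a genuine gap at exactly the step you identify as the crux. On a curved $T$, writing $\hat\bw=A_{\tilde T}^{-1}(\tilde\bv\circ F_{\tilde T})$ and using that $\bPsi_k$ interpolates at the nodes, the reference preimage of $\bv$ is the nodal interpolant of $(A_T^{-1}A_{\tilde T})\hat\bw=(I+B)\hat\bw$ with $\|B\|_{W^{m,\infty}(\hat T)}\le Ch$ (already this requires $|G_h-{\rm id}|_{W^{1,\infty}}\le Ch$ on boundary elements, which is standard isoparametric theory but is not among the stated bounds \eqref{eqn:FTBounds}). The resulting elementwise remainder obeys $|R_T|\le Ch\,\|\hat\bw\|_{L^2(\hat T)}\|\hat q\|_{L^2(\hat T)}$, and undoing the scalings gives $\|\hat\bw\|_{L^2(\hat T)}\le C\|\tilde\bv\|_{L^2(\tilde T)}$ but $\|\hat q\|_{L^2(\hat T)}\le Ch^{-1}\|\tilde q\|_{L^2(\tilde T)}$; the factor $h^{-1}$ from the pressure eats your gain, leaving $|R_T|\le C\|\tilde\bv\|_{L^2(\tilde T)}\|\tilde q\|_{L^2(\tilde T)}$, i.e.\ an $O(1)$ perturbation that cannot be absorbed. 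The fix is to use that $\tilde\bv\in\bH_0^1(\tilde\Omega_h)$ vanishes on the boundary edge of every curved triangle, so the Friedrichs inequality $\|\tilde\bv\|_{L^2(\tilde T)}\le Ch_T|\tilde\bv|_{H^1(\tilde T)}$ holds there (the paper uses exactly this in Corollary \ref{poincare}); with it, $|R|\le Ch\|\nab\tilde\bv\|_{L^2(\tilde\Omega_h)}\|\tilde q\|_{L^2(\tilde\Omega_h)}$ and your argument closes, at the price of requiring $h$ sufficiently small. The mean-correction step you sketch is fine as stated.
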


\subsubsection*{Sketch of proof for Theorem \ref{infsup}} 
Fix $q\in Q_{k-1}^h$, and set $\bar q\in Q_{k-1}^h$ to be piecewise constant with respect to $\calT_h$
satisfying $\int_T (q-\bar q)/\det(DF_T\circ F_T^{-1}) = 0$ for all $T\in \calT_h$.
By a change of variables, we see that $(q-\bar q)\circ F_T\in \hat Q_{k-1,0}$.

Next, the results in, e.g., \cite{guzman2018inf} show that
$\widehat{\rm div}:\hat \bV_{k,0}\to \hat Q_{k-1,0}$ is surjective with bounded
right inverse.
Consequently, for each $T\in \calT_h$, there exists $\hat \bv_{1,T} \in \hat \bV_{k,0}$
such that $\widehat{\rm div}\,\hat \bv_{1,T} = h_T^2 (q-\bar q)|_T \circ F_T$.
and $\|\hat \bv_{1,T}\|_{H^1(\hat T)}\le C h_T^2 \|(q-\bar q)|_T\circ F_T\|_{L^2(\hat T)}\le C h_T \|q-\bar q\|_{L^2(T)}$.
Setting $\bv_{1,T} = (A_T\hat \bv)\circ F_T^{-1}\in \bV_{k,0}$,
we have ${\rm div}\,\bv_{1,T} = h^2_T(q-\bar q)/(\det(DF_T\circ F_T^{-1}))$ by the divergence-preserving
properties of the Piola transform, and $\|\nab \bv_{1,T}\|_{L^2(T)}\le C \|q-\bar q\|_{L^2(T)}$ by a scaling argument.

We then define $\bv_1\in \bV_k^h$ such that $\bv_1|_T = \bv_{1,T}$ for all $T\in \calT_h$. Thus $\|\bv_1\|_{L^2(\Omega_h)}\le C\|q-\bar q\|_{L^2(\Omega_h)}$,
and
\begin{align*}
\sup_{\bv\in\bV_k^h \backslash \{0\}} \frac{\int_{\Omega_h}(\dive \bv)q}{\|\nabla \bv\|_{L^2(\Omega_h)}}
&\ge \frac{\int_{\Omega_h}(\dive \bv_1)q}{\|\nabla \bv_1\|_{L^2(\Omega_h)}}
= \frac{\int_{\Omega_h}(\dive \bv_1)(q-\bar q)}{\|\nabla \bv_1\|_{L^2(\Omega_h)}}\\
& = \frac{\sum_{T\in \calT_h} h_T^2 \int_T |q-\bar q|^2/(\det(DF_T\circ F_T^{-1}))}{\|\nab \bv_1\|_{L^2(\Omega_h)}}\\
&\ge \gamma_0 \|q-\bar q\|_{L^2(\Omega_h)}.
\end{align*}


Next, Theorem 4.4 in \cite{NeilanOtus21} shows
that
\[
\sup_{\bv\in \bV_k^h\backslash \{0\}} \frac{\int_{\Omega_h} ({\rm div}\, \bv) \bar q}{\|\nab_h \bv\|_{L^2(\Omega_h)}} \ge \gamma_1 \|\bar q\|_{L^2(\Omega_h)}.
\]
Consequently, it follows that
\begin{alignat*}{1}
\|q\|_{L^2(\Omega_h)} \leq& \|q - \bar{q}\|_{L^2(\Omega_h)} + \|\bar{q}\|_{L^2(\Omega_h)} \\
\leq& (\gamma_0^{-1}+\gamma_1^{-1}(1+\gamma_0^{-1})) \sup_{\bv\in\bV_h^k \setminus \{0\}} \frac{\int_{\Omega_h}(\dive \bv)q}{\|\nabla \bv\|_{L^2(\Omega_h)}}.
\end{alignat*}
 
\section{The Stokes System and Finite Element Method}\label{stokesFEM}

We let $(\bu,p) \in \bH_0^1(\Omega)\times L_0^2(\Omega)$ be the solution to the Stokes problem
\begin{equation}\label{fullStokes}
    \begin{cases}
        -\nu\Delta \bu + \nabla p = \bff \quad &\text{ in } \Omega, \\
        \dive{\bu} = 0  \quad &\text{ in } \Omega, \\
        \bu = 0 \quad &\text{ on } \partial \Omega,
    \end{cases}
\end{equation}
where $\nu>0$ is the viscosity. We assume
the domain $\Omega$ and source function ${\bm f}$ are sufficiently
smooth such that $(\bu,p)\in \bH^s(\Omega)\times H^{s-1}(\Omega)$
with $s\ge 2$.  We then extend the velocity solution
to $\mathbb{R}^2$ such that the extension (still denoted by $\bu$) 
is divergence-free and satisfies \cite{kato2000}
\begin{equation}\label{spaceExtend}
\|\bu\|_{H^s(\mathbb{R}^2)} \leq  C\|\bu\|_{H^s(\Omega)}.
\end{equation}
Likewise, we extend the pressure solution $p$ to $\mathbb{R}^2$ with $\|p\|_{H^{s-1}(\bbR^2)}\le C \|p\|_{H^{s-1}(\Omega)}$
and extend the source function by setting ${\bm f}= -\nu \Delta \bu+\nab p$ in $\bbR^2$.

We define the continuous bilinear forms
\begin{alignat*}{1}
a(\bu,\bv):=& \int_{\Omega} \nu \nabla \bu : \nabla \bv, \qquad
b(\bv,p) := -\int_{\Omega}(\dive{\bv})p,
\end{alignat*}
and the discrete bilinear forms
\begin{alignat*}{1}
a_h(\bu_h,\bv):=& \int_{\Omega_h} \nu \nabla_h \bu_h : \nabla_h \bv,\qquad
b_h(\bv,p_h) := -\int_{\Omega_h}(\dive{\bv})p_h.
\end{alignat*}

Clearly, the solution to \eqref{fullStokes} solves the variational problem
\begin{equation}\label{continuousVar}
    a(\bu,\bv) = \int_{\Omega}\bff \cdot \bv \quad \forall \bv \in \bX:=\{\bv\in \bH^1_0(\Omega):\ \dive{\bv}=0\}.
\end{equation}

We  define the finite element method as finding $(\bu_h, p_h) \in \bV^h_k \times Q^h_{k-1}$ such that 
\begin{subequations}
    \label{eqn:FEM}
\begin{alignat}{2}
a_h(\bu_h,\bv) + b_h(\bv,p_h) =& \int_{\Omega_h} \bff_h \cdot \bv \quad &&\forall \bv \in \bV^h_k,\label{FE11}\\
-b_h(\bu_h,q) =& 0 \quad && \forall q \in Q^h_k,\label{FE12}
\end{alignat}
\end{subequations}
where $\bff_h\in \bL^2(\Omega_h)$ is a suitable (and computable) approximation to $\bff|_{\Omega}$. 
It follows from the inf-sup condition in Theorem \ref{infsup}
and the Poincare inequality in Corollary \ref{poincare}
that there exists a unique solution to \eqref{eqn:FEM}.
Moreover, by a simple generalization of \cite[Lemma 5.2]{NeilanOtus21},
the method \eqref{eqn:FEM} yields divergence-free velocity approximations.
\begin{lemma}\label{lem:DivFree}
    Let $\bu_h\in \bV^h_k$ satisfy \eqref{FE12}.
    Then $\dive \bu_h=0$ in $\Omega_h$.
\end{lemma}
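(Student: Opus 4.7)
The plan is to exploit the divergence-preserving property of the Piola transform to reduce the claim to a pointwise vanishing on the reference configuration. For each $T\in\calT_h$, let $\hat{\bu}_h\in \hat{\bV}_k$ be such that $\bu_h|_T=(A_T\hat{\bu}_h)\circ F_T^{-1}$. A standard Piola identity gives $(\dive\,\bu_h)\circ F_T=(\widehat{\dive}\,\hat{\bu}_h)/\det(DF_T)$, and in particular the quantity $\widehat{\dive}\,\hat{\bu}_h$ is piecewise polynomial of degree $(k-1)$ on $\hat{T}^{ct}$.

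Guided by this, I would construct a candidate test function in $Q^h_{k-1}$ as follows. Define $\tilde r\in L^2(\tilde\Omega_h)$ piece-by-piece by
\[
\tilde r|_{\tilde T}:=\frac{1}{\det(DF_{\tilde T})}(\widehat{\dive}\,\hat{\bu}_h)\circ F_{\tilde T}^{-1}.
\]
Since $F_{\tilde T}$ is affine, each $\tilde r|_{\tilde T}$ lies in $\tilde Q_{k-1}(\tilde T)$. The critical check is that $\tilde r\in L_0^2(\tilde\Omega_h)$, so that $\tilde r\in\tilde Q^h_{k-1}$. A change of variables on each $\tilde T$ yields $\int_{\tilde T}\tilde r=\int_{\hat T}\widehat{\dive}\,\hat{\bu}_h$, and another change of variables (using the Piola divergence identity) gives $\int_{\hat T}\widehat{\dive}\,\hat{\bu}_h=\int_T \dive\,\bu_h$. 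Summing over the mesh and invoking Lemma~\ref{Hdiv} (so $\bu_h\cdot\bn=0$ on $\partial\Omega_h$), the divergence theorem gives $\int_{\tilde\Omega_h}\tilde r=\int_{\Omega_h}\dive\,\bu_h=0$. Thus $\tilde r\in\tilde Q^h_{k-1}$ and $q:=\Upsilon_{k-1}\tilde r\in Q^h_{k-1}$ is an admissible test function in \eqref{FE12}.

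Finally I would insert $q$ into \eqref{FE12}. On each $T$, using $q|_T\circ F_T=\tilde r|_{\tilde T}\circ F_{\tilde T}=(\widehat{\dive}\,\hat{\bu}_h)/\det(DF_{\tilde T})$, a change of variables gives
\[
\int_T (\dive\,\bu_h)\,q=\frac{1}{\det(DF_{\tilde T})}\int_{\hat T}(\widehat{\dive}\,\hat{\bu}_h)^2.
\]
Summing over $T\in\calT_h$ and using that each weight $1/\det(DF_{\tilde T})$ is a positive constant (by shape regularity of $\tilde\calT_h$) forces $\widehat{\dive}\,\hat{\bu}_h\equiv 0$ on every $\hat T$, and hence $\dive\,\bu_h\equiv 0$ on $\Omega_h$.

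The main obstacle I anticipate is verifying that $\tilde r$ has vanishing global mean, since the weights $\det(DF_{\tilde T})$ vary from element to element and interior edge contributions do not cancel in a naive way. The resolution is precisely the Piola-invariant flux identity $\int_{\hat T}\widehat{\dive}\,\hat{\bu}_h=\int_T\dive\,\bu_h$, which lets the sum telescope globally via the $\bH(\dive)$-conformity and homogeneous boundary condition of $\bV^h_k$; all other steps are routine changes of variables.
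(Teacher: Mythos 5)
Your proof is correct and is essentially the argument the paper has in mind: the paper gives no details here, deferring to a ``simple generalization'' of \cite[Lemma 5.2]{NeilanOtus21}, and that argument is exactly yours --- observe that the Piola identity puts a suitably weighted copy of $\dive \bu_h$ into the discrete pressure space, verify the zero-mean constraint (which you handle correctly via the $\bH_0(\dive)$-conformity from Lemma \ref{Hdiv}), and test \eqref{FE12} with it to get a sign-definite sum. Your per-element weight $1/\det(DF_{\tilde T})$ is the right device to make the mean-zero check telescope, and the rest is routine.
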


\subsection{Energy estimates}
In this section, we derive error estimates
for the approximation velocity and pressure solutions
in the $H^1$ and $L^2$ norms, respectively.
To this end, we  define the discrete space
of divergence-free functions
\begin{equation*}
\bX^h_k := \{ \bv \in \bV^h_k : \dive{\bv} = 0\} \nsubseteq \bX := \{\bv \in \bH_0^1(\Omega): \dive{\bv} = 0\},
\end{equation*}
and note that functions in this space are not necessarily in $\bH_0^1$.
Lemma \ref{lem:DivFree} shows that $\bu_h\in \bX_h^k$,
and thus the velocity solution $\bu_h$ is uniquely characterized
as the solution of the Poisson-type problem
\begin{equation}\label{discreteVar}
a_h(\bu_h,\bv) = \int_{\Omega_h}\bff_h \cdot \bv \quad \forall \bv \in \bV^h_k.
\end{equation}
%
\begin{theorem}\label{H1}
Let $(\bu,p)\in \bH^s(\Omega)\times H^{s-1}(\Omega)$ satisfy \eqref{fullStokes} with $s \ge 2$.
Then there holds
\begin{equation}\label{eqn:velocityH1}
\|\nabla_h(\bu - \bu_h)\|_{L^2(\Omega_h)} \leq C\left( h^{\ell-1} \|\bu\|_{H^{\ell}(\Omega)} + \nu^{-1} \vert \bff - \bff_h \vert_{\bX_k^*} \right),
\end{equation}
where  $\ell = \min\{k+1,s\}$, and 
\begin{equation*}
    \vert \bff - \bff_h \vert_{\bX_k^*} = \sup_{\bv \in \bX_k^h \setminus \{0\}} \frac{\int_{\Omega_h}(\bff-\bff_h) \cdot \bv}{\|\nabla_h \bv \|_{L^2(\Omega_h)}}.
\end{equation*}
The pressure approximation, $p_h$, satisfies
\begin{alignat}{1}\label{eqn:pressureL2}
    \|p-p_h\|_{L^2(\Omega_h)} 
    \leq& C\left( h^{\ell-1}(\nu \|\bu\|_{H^{\ell}(\Omega)} +\|p\|_{H^{\ell-1}(\Omega)})
     + \|\bff- \bff_h\|_{L^2(\Omega_h)}\right).
\end{alignat}
\end{theorem}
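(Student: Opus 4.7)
My plan is to follow the standard framework for mixed methods while handling two sources of nonconformity: the failure of $\bV_k^h$ to be $\bH^1$-conforming, and the geometric mismatch between $\Omega$ and $\Omega_h$. The first step is to derive a consistency identity. Using the extensions of $\bu$ and $p$ to $\mathbb{R}^2$ so that $-\nu\Delta \bu + \nabla p = \bff$ on all of $\mathbb{R}^2 \supset \Omega_h$, elementwise integration by parts gives, for any $\bv \in \bV_k^h$,
\[
a_h(\bu,\bv) + b_h(\bv,p) = \int_{\Omega_h} \bff \cdot \bv + \nu \sum_{e \in \mathcal{E}_h^I} \int_e \nabla \bu : [\bv].
\]
The crucial point is that the pressure jump contribution $\sum_e \int_e p\, [\bv\cdot \bn]$ vanishes because $[\bv\cdot\bn]=0$ by Lemma \ref{Hdiv}. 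Subtracting the FE equations \eqref{eqn:FEM} yields the error equation
\[
a_h(\bu-\bu_h,\bv) + b_h(\bv,p-p_h) = \int_{\Omega_h} (\bff-\bff_h)\cdot \bv + \nu \sum_{e \in \mathcal{E}_h^I} \int_e \nabla \bu : [\bv].
\]

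For the velocity bound \eqref{eqn:velocityH1}, I would construct a divergence-free quasi-interpolant $\bu_I \in \bX_k^h$ via a Fortin-type projection: applying Theorem \ref{infsup} in the standard way yields an operator $\Pi_h:\bH^1_0\cap\bH^s\to\bV_k^h$ satisfying $b_h(\bu-\Pi_h\bu,q)=0$ for all $q\in Q_{k-1}^h$, and the change-of-variables argument behind Lemma \ref{lem:DivFree} then upgrades this to $\dive\Pi_h\bu=0$, so $\bu_I:=\Pi_h\bu\in \bX_k^h$. Combined with Lemma \ref{interp} one gets $\|\nabla_h(\bu-\bu_I)\|_{L^2(\Omega_h)} \leq C h^{\ell-1}\|\bu\|_{H^\ell(\Omega)}$. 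Setting $\eta := \bu_I - \bu_h \in \bX_k^h$ and testing the error equation with $\bv = \eta$ annihilates $b_h(\eta,p-p_h)$, leaving
\[
\nu\|\nabla_h\eta\|_{L^2(\Omega_h)}^2 = -a_h(\bu-\bu_I,\eta) + \int_{\Omega_h}(\bff-\bff_h)\cdot \eta + \nu\sum_{e\in \mathcal{E}_h^I} \int_e \nabla \bu : [\eta].
\]
The first term is handled by Cauchy-Schwarz and the approximation bound, the second by the dual-norm definition of $|\bff-\bff_h|_{\bX_k^*}$, and the third by Lemma \ref{jumpBound} applied with $\bw=\bu$ and $m=1$, producing $Ch^{r+1}\|\bu\|_{H^{r+1}(\Omega)}\|\eta\|_{H^1_h(\Omega_h)}$; Corollary \ref{poincare} then replaces $\|\eta\|_{H^1_h}$ by $\|\nabla_h\eta\|_{L^2}$. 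Since $r+1=\min\{s,k\}\ge \ell-1$, dividing by $\|\nabla_h\eta\|$ and a triangle inequality yields \eqref{eqn:velocityH1}.

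For the pressure estimate \eqref{eqn:pressureL2}, I would take $p_I$ to be the elementwise $L^2$-projection of $p$ onto $Q_{k-1}^h$, which satisfies $\|p-p_I\|_{L^2(\Omega_h)}\le Ch^{\ell-1}\|p\|_{H^{\ell-1}(\Omega)}$. Applying the inf-sup stability of Theorem \ref{infsup} to $p_I-p_h$,
\[
\|p_I - p_h\|_{L^2(\Omega_h)} \leq C \sup_{\bv \in \bV_k^h\setminus\{0\}} \frac{|b_h(\bv, p_I-p_h)|}{\|\nabla_h \bv\|_{L^2(\Omega_h)}},
\]
and I would split $b_h(\bv,p_I-p_h)=b_h(\bv,p_I-p)+b_h(\bv,p-p_h)$, using the error equation to replace $b_h(\bv,p-p_h)$ by $\int(\bff-\bff_h)\cdot\bv$, $-a_h(\bu-\bu_h,\bv)$, and the edge jump term. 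These are bounded respectively by $\|\bff-\bff_h\|_{L^2}\|\bv\|_{L^2}$ (combined with Corollary \ref{poincare}), by the velocity estimate \eqref{eqn:velocityH1}, and by Lemma \ref{jumpBound} with $m=1$. A final triangle inequality delivers \eqref{eqn:pressureL2}.

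The main obstacle is controlling the consistency error $\nu\sum_e\int_e \nabla \bu : [\bv]$ with the full rate $h^{\ell-1}$, rather than the half-power loss that would result from a naive trace inequality. This is precisely the content of Lemma \ref{jumpBound}, whose proof relies essentially on the Gauss-Lobatto placement of interior-edge degrees of freedom (as Remark \ref{remEq} emphasizes, equidistant nodes would be insufficient). Beyond this, the Fortin construction of $\bu_I$ is routine, and the geometric error between $\Omega$ and $\Omega_h$ does not appear explicitly because $\bu$, $p$, and $\bff$ are globally extended to $\mathbb{R}^2$; it is absorbed into the dominant $h^{\ell-1}$ rate via the standard isoparametric interpolation estimate of Lemma \ref{interp}.
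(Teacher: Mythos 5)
Your proposal is correct and follows essentially the same route as the paper: the same elementwise integration-by-parts consistency identity (with the pressure jump killed by normal continuity), Lemma \ref{jumpBound} with $m=1$ for the edge term, and inf-sup stability plus the error equation for the pressure. The only presentational difference is that you make explicit the Fortin-type divergence-free interpolant $\bu_I\in\bX_k^h$, whereas the paper absorbs this into the standard nonconforming-mixed estimate $\inf_{\bw\in\bX_k^h}\|\nabla_h(\bu-\bw)\|\le C\inf_{\bw\in\bV_k^h}\|\nabla_h(\bu-\bw)\|$ obtained from Theorem \ref{infsup}; these are equivalent.
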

\begin{proof}

From  standard theory of non-conforming finite elements (see, for example, \cite{brenner2008mathematical})
and the inf-sup condition \eqref{eqn:InfSup}, 
\begin{equation}
\label{strang2}
\begin{split}
    \nu \|\nabla_h(\bu - \bu_h)\|_{L^2(\Omega_h)} \leq&  \inf_{\bw \in \bX^h_k} \nu \|\nabla_h(\bu - \bw)\|_{L^2(\Omega_h)} + \sup_{\bv \in \bX^h_k \setminus \{0\}} \frac{a_h(\bu_h - \bu, \bv)}{\|\nabla_h \bv\|_{L^2(\Omega_h)}} \\
    \leq& C\inf_{\bw \in \bV^h_k} \nu \|\nabla_h(\bu - \bw)\|_{L^2(\Omega_h)} + \sup_{\bv \in \bX^h_k \setminus \{0\}} \frac{a_h(\bu_h - \bu, \bv)}{\|\nabla_h \bv\|_{L^2(\Omega_h)}} \\
    \leq& C h^{\ell-1} \nu \|\bu \|_{H^\ell(\Omega)}+ \sup_{\bv \in \bX^h_k \setminus \{0\}} \frac{a_h(\bu_h - \bu, \bv)}{\|\nabla_h\bv\|_{L^2(\Omega_h)}}, 
\end{split}
\end{equation}
where the final step follows from Lemma \ref{interp}.

To address the consistency term, we note that we have $\forall \bv \in \bX^h_k$
\begin{equation}
\label{strang2A}
\begin{split}
a_h(\bu_h - \bu,\bv) 
=& \int_{\Omega_h} \bff \cdot \bv - a_h(\bu, \bv) + \int_{\Omega_h}(\bff_h - \bff)\cdot \bv \\
=&  -\nu\int_{\Omega_h} \Delta \bu \cdot \bv  - a_h(\bu,\bv) + \int_{\Omega_h}(\bff_h - \bff)\cdot \bv . 
\end{split}
\end{equation}
Note that the last step uses the fact that $\bv \in \bX^h_k$, therefore $\dive{\bv} = 0$ and $\bv = 0$ on $\partial \Omega_h$.


We then apply a standard integration-by-parts formula in \eqref{strang2A},
Lemma \ref{jumpBound} (with $m=1$, and noting $r = \min\{s-1,k-1\}\le \ell-1$) and Corollary \ref{poincare}
to obtain
\begin{equation} 
\label{strang2B}
\begin{split}
a_h(\bu_h - \bu,\bv) 
=& -\nu \sum_{e\in \mathcal{E}^{I}_h} \int_e \nabla \bu : [\bv] + \int_{\Omega_h}(\bff_h - \bff)\cdot \bv\\
\le& C \nu  h^{\ell-1} \|\bu\|_{H^\ell(\Omega)} \|\nab_h \bv\|_{L^2(\Omega_h)}+ \|\bff_h - \bff\|_{X_k^*} \|\nab_h \bv\|_{L^2(\Omega_h)}.
\end{split}
\end{equation}
Finally, to complete the velocity bound \eqref{eqn:velocityH1}, we apply this estimate to \eqref{strang2}.

To prove the pressure bound \eqref{eqn:pressureL2}, we fix $q\in Q^h_{k-1}$. 
For any $\bv \in \bV^h_k$, we then have the following identity, using integration by parts and \eqref{fullStokes}:
\begin{equation}
\label{pressure1}
\begin{split}
\int_{\Omega_h} (\dive{\bv})(p_h - q) 
=& a_h(\bu_h,\bv) - \int_{\Omega_h}(\dive{\bv})q - \int_{\Omega_h}(\bff_h - \bff)\cdot \bv -\int_{\Omega_h}\bff \cdot \bv \\
=& a_h(\bu_h,\bv) + \nu\int_{\Omega_h} \Delta \bu\cdot \bv - \int_{\Omega_h} \nabla p \cdot \bv - \int_{\Omega_h}(\dive{\bv})q  - \int_{\Omega_h}(\bff_h - \bff)\cdot \bv \\
=& a_h(\bu_h - \bu, \bv) - \int_{\Omega_h}(\dive{\bv})(q-p) - \int_{\Omega_h} (\bff_h - \bff)\cdot \bv + \nu \sum_{e\in \mathcal{E}^{I}_h} \int_e \nabla \bu : [\bv]. 
\end{split}
\end{equation}
Then, applying \eqref{eqn:jumpBound} to \eqref{pressure1} and Corollary \ref{poincare}, we have
%
\begin{equation}
\label{pressure3} 
\begin{split}
\int_{\Omega_h} (\dive{\bv})(p_h - q)\leq& C\bigg(\nu\|\nabla_h(\bu_h - \bu)\|_{L^2(\Omega_h)} + \|q-p\|_{L^2(\Omega_h)}  \\
& \quad + \nu h^{\ell-1} \|\bu\|_{H^\ell(\Omega_h)} + \|\bff - \bff_h\|_{L^2(\Omega_h)} \bigg)\|\nabla_h \bv\|_{L^2(\Omega_h)}. 
\end{split}
\end{equation}

Finally, by triangle inequality and Theorem \ref{infsup}, we have
\begin{alignat*}{1}
\|p - p_h\|_{L^2(\Omega_h)} \leq& \|p - q\|_{L^2(\Omega_h)} + \|p_h - q\|_{L^2(\Omega_h)} \\
\leq& \|p - q\|_{L^2(\Omega_h)} + \sup_{\bv \in \bV^h \setminus \{0\}} \frac{\int_{\Omega_h} (\dive{\bv})(p_h - q)}{\|\nabla \bv\|_{L^2(\Omega_h)}}.
\end{alignat*}
Applying \eqref{pressure3} to this result, taking the infimum over $q \in Q^h_{k-1}$, and using \eqref{eqn:velocityH1} completes the proof.
\end{proof}

\begin{corollary} \label{H2Bound}
Assume the conditions in Theorem \ref{H1} are satisfied,
and in addition, assume the mesh $\calT_h$ is quasi-uniform.  
Then the solution $\bu_h\in \bV^h_k$ to \eqref{eqn:FEM} satisfies
    \begin{equation*}
        \|\bu_h\|_{H^\ell_h(\Omega_h)} \leq C \bigg(\| \bu\|_{H^\ell(\Omega)} + h^{1-\ell}\nu^{-1}\vert \bff - \bff_h\vert_{\bX_k^*}\bigg).
    \end{equation*}
\end{corollary}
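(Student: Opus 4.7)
The plan is to split $\bu_h = (\bu_h - \bI_k^h\bu) + \bI_k^h\bu$ and bound each summand in the piecewise norm $\|\cdot\|_{H^\ell_h(\Omega_h)}$ separately. The tools I expect to use are the two inverse-type estimates in Lemma \ref{lem:InvI}, the interpolation bound in Lemma \ref{interp}, the discrete Poincar\'e inequality in Corollary \ref{poincare}, and of course the energy estimate \eqref{eqn:velocityH1} from Theorem \ref{H1}. Quasi-uniformity will enter when converting the local factor $h_T^{1-\ell}$ coming from inverse estimates into the global factor $h^{1-\ell}$ appearing in the statement.

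For the interpolant piece, since $\bI_k^h\bu\in \bV_k^h$, the bound \eqref{eqn:InvI2} gives on each $K\in T^{ct}$
\[
\|\bI_k^h\bu\|_{H^\ell(K)}\le C \|\bI_k^h\bu\|_{H^{\min\{k,\ell\}}(K)}.
\]
A triangle inequality followed by Lemma \ref{interp} then yields, for either $\ell\le k$ or $\ell=k+1$,
\[
\|\bI_k^h\bu\|_{H^\ell_h(\Omega_h)}\le C\|\bu\|_{H^\ell(\Omega_h)}\le C\|\bu\|_{H^\ell(\Omega)},
\]
where the last step uses the Sobolev extension \eqref{spaceExtend}.

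For the discrete remainder $\bw:=\bu_h-\bI_k^h\bu\in\bV_k^h$, the inverse estimate \eqref{eqn:InvI1} with $p=q=2$ and $m=1$ gives $\|\bw\|_{H^\ell(K)}\le C h_T^{1-\ell}\|\bw\|_{H^1(K)}$. Summing over $K\in T^{ct}$ and $T\in\calT_h$, invoking quasi-uniformity ($h_T\ge c h$, so $h_T^{1-\ell}\le C h^{1-\ell}$ since $\ell\ge 1$), and then Corollary \ref{poincare}, I obtain
\[
\|\bw\|_{H^\ell_h(\Omega_h)}\le C h^{1-\ell}\|\nabla_h\bw\|_{L^2(\Omega_h)}.
\]
A triangle inequality splits $\|\nabla_h\bw\|_{L^2(\Omega_h)}\le \|\nabla_h(\bu_h-\bu)\|_{L^2(\Omega_h)}+\|\nabla_h(\bu-\bI_k^h\bu)\|_{L^2(\Omega_h)}$; applying Theorem \ref{H1} to the first term and Lemma \ref{interp} to the second yields $\|\nabla_h\bw\|_{L^2(\Omega_h)}\le C(h^{\ell-1}\|\bu\|_{H^\ell(\Omega)}+\nu^{-1}|\bff-\bff_h|_{\bX_k^*})$. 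Multiplying by $h^{1-\ell}$ absorbs the positive power of $h$ in the first term and produces the advertised $h^{1-\ell}$ factor on the data-consistency term.

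The only subtlety I expect is the top-order case $\ell=k+1$, where $\bI_k^h\bu$ is not pointwise controlled by its values alone and one truly needs the ``Bernardi-type'' estimate \eqref{eqn:InvI2} to reduce $H^{k+1}$-norms of finite element functions to $H^k$-norms; this is the analogue of the fact that classical isoparametric elements lose a derivative on curved triangles. Everything else is bookkeeping with the inverse estimates already established in Lemma \ref{lem:InvI} and the energy bound already proved in Theorem \ref{H1}.
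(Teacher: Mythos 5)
Your proposal is correct and follows essentially the same route as the paper: split $\bu_h$ around the interpolant $\bI_k^h\bu$, apply the inverse estimate \eqref{eqn:InvI1} (with quasi-uniformity) to the discrete remainder, and conclude with Lemma \ref{interp} and Theorem \ref{H1}. The only differences are cosmetic refinements — you route the top-order bound on $\bI_k^h\bu$ through \eqref{eqn:InvI2} where the paper simply applies Lemma \ref{interp} with $m=\ell$, and you invoke Corollary \ref{poincare} to handle the $L^2$ part of the $H^1$ norm, a detail the paper leaves implicit.
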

\begin{proof}
Define $\bI_k^h \bu\in \bV_k^h$ to be the approximation to $\bu$ 
given in Lemma \ref{interp}.
Then, applying the inverse inequality \eqref{eqn:InvI1},  Lemma \ref{interp}, and Theorem \ref{H1}, we have
\begin{alignat*}{1}
 \|\bu_h\|_{H^\ell_h(\Omega_h)} 
 \leq& C\big(\|\bu - \bI_k^h \bu \|_{H^\ell_h(\Omega_h)} + h^{1-\ell}\|\bI_k^h \bu - \bu_h\|_{H^1_h(\Omega_h)} + \|\bu\|_{H^\ell(\Omega)}\big) \\
 \leq& C\big(\|\bu - \bI_k^h \bu \|_{H^\ell_h(\Omega_h)} + h^{1-\ell}\|\bI_k^h \bu - \bu\|_{H^1_h(\Omega_h)} + h^{1-\ell}\|\bu - \bu_h\|_{H^1_h(\Omega_h)} +\|\bu\|_{H^\ell(\Omega)}\big)\\
 \leq& C\big(\|\bu\|_{H^\ell(\Omega)} + h^{1-\ell}\nu^{-1}\vert \bff - \bff_h\vert_{\bX_k^*}\big).
\end{alignat*}
\end{proof}
\begin{remark}
If $\bff$ is sufficiently smooth, and $\bff_h$ is, for example, the $k$th degree nodal (isoparametric) interpolant, then 
$|\bff-\bff_h|_{\bX_k^*}\le \|\bff-\bff_h\|_{L^2(\Omega_h)}\le C h^{k+1}\|\bff\|_{H^{k+1}(\Omega_h)}$.
Thus, Corollary \ref{H2Bound} yields
\begin{equation}\label{uH2_withF}
     \|\bu_h\|_{H^\ell_h(\Omega_h)} \leq C\left( \| \bu\|_{H^\ell(\Omega)} + h^{k-\ell+2}\nu^{-1}\| \bff\|_{H^{k+1}(\Omega_h)}\right).
\end{equation}
\end{remark}
\section{Convergence analysis in $L^2$}\label{l2est}
In this section, we prove the following optimal-order 
$L^2$ error estimate.
\begin{theorem}
Assume the conditions in Theorem \ref{H1} are satisfied,
and in addition, assume the mesh $\calT_h$ is quasi-uniform.
    We have 
    \begin{equation}\label{eqn:L2velocity}
        \|\bu-\bu_h\|_{L^2(\Omega_h)} \leq C\bigg( h^\ell \|\bu\|_{H^\ell(\Omega)} + (\nu^{-1} h +1)\vert \bff - \bff_h \vert_{\bX^*_h}\bigg),
    \end{equation}
    where $C$ is a constant that does not depend on the mesh parameter $h$,
    and we recall $\ell = \min\{s,k+1\}$.
\end{theorem}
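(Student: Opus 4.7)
The plan is to prove the $L^2$ velocity estimate \eqref{eqn:L2velocity} by a duality (Aubin--Nitsche) argument, suitably modified to accommodate both the geometric mismatch $\Omega \triangle \Omega_h$ and the fact that $\bV^h_k$ is only $\bH(\dive)$-conforming (not $\bH^1$-conforming). Let $\be = \bu - \bu_h$, and let $(\bphi, \psi) \in \bH^1_0(\Omega) \times L^2_0(\Omega)$ solve the dual Stokes problem
\begin{equation*}
-\nu \Delta \bphi + \nab \psi = \chi_{\Omega \cap \Omega_h}\,\be,\quad \dive \bphi = 0 \text{ in } \Omega,\quad \bphi = 0 \text{ on } \p \Omega,
\end{equation*}
which by standard smooth-domain elliptic regularity satisfies $\nu\|\bphi\|_{H^2(\Omega)} + \|\psi\|_{H^1(\Omega)} \le C\|\be\|_{L^2(\Omega \cap \Omega_h)}$. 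After extending $\bphi,\psi$ stably to $\mathbb{R}^2$, I would test the dual PDE against $\be$, split $\int_{\Omega \cap \Omega_h} = \int_{\Omega_h} - \int_{\Omega_h \setminus \Omega}$, and integrate by parts piecewise on $\calT_h$. Three facts drive the simplifications: (i) $\dive \be = 0$ on $\Omega_h$ (by Lemma \ref{lem:DivFree} and the divergence-free extension \eqref{spaceExtend} of $\bu$), (ii) $[\be \cdot \bn] = 0$ on interior edges (by Lemma \ref{Hdiv} and smoothness of $\bu$), and (iii) $\bu_h|_{\p \Omega_h} = 0$. These yield the identity
\begin{equation*}
\|\be\|^2_{L^2(\Omega \cap \Omega_h)} = \nu(\nab \bphi, \nab_h \be)_{\Omega_h} - \nu \sum_{e \in \mathcal{E}_h^I} \int_e \nab \bphi : [\be] + R_\p + R_\triangle,
\end{equation*}
where $R_\p$ collects $\p\Omega_h$-boundary terms involving only $\bu$ (small since $\bu|_{\p\Omega}=0$) and $R_\triangle$ collects volume integrals over $\Omega_h \triangle \Omega$.

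To dispatch the main term $\nu(\nab \bphi, \nab_h \be)_{\Omega_h}$, I would write $\bphi = (\bphi - \tilde\bphi) + \tilde\bphi$, where $\tilde\bphi \in \bX^h_k$ is a divergence-free Fortin-type interpolant of $\bphi$ (whose existence follows from the inf-sup condition, Theorem \ref{infsup}) with optimal-order approximation. The $(\bphi - \tilde\bphi)$ part is controlled by Cauchy--Schwarz, Lemma \ref{interp}, the $H^1$-error estimate \eqref{eqn:velocityH1}, and the bound $\nu\|\bphi\|_{H^2}\le C\|\be\|_{L^2}$, producing contributions of $C h^\ell \|\bu\|_{H^\ell}$ and $C\nu^{-1}h\,|\bff-\bff_h|_{\bX_k^*}$ after dividing by $\|\be\|_{L^2(\Omega \cap \Omega_h)}$. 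The $\tilde\bphi$ portion is handled by invoking the discrete equation \eqref{FE11} (so that $\nu(\nab_h \bu_h, \nab \tilde\bphi)_{\Omega_h} = (\bff_h, \tilde\bphi)_{\Omega_h}$ thanks to $\tilde\bphi \in \bX^h_k$) together with the continuous equation satisfied by $\bu$; the resulting source term $(\bff - \bff_h, \tilde\bphi)_{\Omega_h}$ is bounded by $|\bff - \bff_h|_{\bX_k^*}\|\nab_h \tilde\bphi\|_{L^2(\Omega_h)}$, giving the $|\bff - \bff_h|_{\bX_k^*}$ portion of the final estimate. The jump sum is controlled via Lemma \ref{jumpBound} with $\bw = \bphi$ (so $s = 2$ and $r = 1$), $\bv = \bu_h$ (using $[\be] = -[\bu_h]$), and $m = \ell - 1$, producing a bound of the form $Ch^\ell\|\bphi\|_{H^2(\Omega)}\|\bu_h\|_{H^{\ell-1}_h(\Omega_h)}$; the factor $\|\bu_h\|_{H^{\ell-1}_h}$ is then tamed by Lemma \ref{lem:InvI} (inverse estimate), Lemma \ref{interp}, and Corollary \ref{H2Bound}. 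The remainders $R_\p$ and $R_\triangle$ are bounded using Lemma \ref{domainEdges}, the geometric estimate \eqref{eqn:GeoError}, and trace inequalities, crucially exploiting that $\bu|_{\p\Omega}=0$. Finally, $\|\bu-\bu_h\|^2_{L^2(\Omega_h)}$ is split into $\Omega \cap \Omega_h$ and $\Omega_h \setminus \Omega$ pieces, the latter controlled by Poincar\'e-type arguments using $\bu_h|_{\p\Omega_h}=0$.

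I expect the principal obstacle to be calibrating the jump estimate so that no power of $h$ is lost. Because $\bphi$ is only $H^2$-regular, Lemma \ref{jumpBound} forces $r = 1$, so the only route to the optimal $h^\ell$ rate is to take $m = \ell - 1$ and then bound $\|\bu_h\|_{H^{\ell-1}_h}$ through the higher-order stability in Corollary \ref{H2Bound}; the Gauss--Lobatto placement of the edge degrees of freedom (see Remark \ref{remEq}) is precisely what makes Lemma \ref{jumpBound} sharp enough to close the argument. A secondary technical nuisance is the boundary-condition mismatch---$\bphi$ vanishes on $\p\Omega$ but not on $\p\Omega_h$---which forces a local modification of $\tilde\bphi$ near $\p\Omega_h$ whose error must be controlled by \eqref{eqn:GeoError} and Lemma \ref{domainEdges}, and a careful bookkeeping of how the source-pressure interaction produces the $|\bff - \bff_h|_{\bX_k^*}$ contribution without incurring extra factors of $\nu^{-1}$.
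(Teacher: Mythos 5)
Your proposal is correct in its essentials and rests on the same core mechanism as the paper's proof: a dual Stokes problem with data $\chi_{\Omega\cap\Omega_h}(\bu-\bu_h)$ and $H^2$ regularity, the jump estimate of Lemma \ref{jumpBound} applied with the dual solution as $\bw$ (so $r=1$, compensated by a high $m$ and the discrete stability bound of Corollary \ref{H2Bound}), and a separate treatment of the mismatch region $\Omega\triangle\Omega_h$. Where you genuinely diverge is in how the duality identity is expanded. You test the dual PDE directly against $\bu-\bu_h$ and integrate by parts, which forces you to carry the dual pressure, boundary remainders $R_\p$ on $\p\Omega_h$, a volume remainder $R_\triangle$, and a divergence-free Fortin-type interpolant whose boundary values must be corrected near $\p\Omega_h$. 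The paper instead introduces the \emph{discrete} dual solution $\bpsi_h\in\bX^h_k$ defined by $a_h(\bpsi_h,\bv)=\int_{\Omega_h}\bphi\cdot\bv$ and writes $\|\bu-\bu_h\|^2_{L^2(\Omega\cap\Omega_h)}=a(\bu,\bpsi)-a_h(\bu_h,\bpsi_h)=[a(\bu,\bpsi)-a_h(\bu,\bpsi)]+a_h(\bu,\bpsi-\bpsi_h)+a_h(\bu-\bu_h,\bpsi_h)$; this eliminates the dual pressure and the $\p\Omega_h$ remainders at a stroke, reduces the geometric error to the single bracket handled by Lemma \ref{domainEdges}, controls $\bpsi-\bpsi_h$ by Theorem \ref{H1} applied to the dual problem with $s=2$, and leaves exactly two applications of Lemma \ref{jumpBound} (one with $\bw=\bu$, $\bv=\bpsi_h$, $m=2$; one with $\bw=\bpsi$, $\bv=\bu_h$, $s=2$, $m=k$, versus your slightly more economical $m=\ell-1$). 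Your route is workable but costs extra technical estimates (e.g.\ $\|\bu\|_{L^2(\p\Omega_h)}\le Ch^{k+1}\|\bu\|_{H^2(\Omega)}$ for $R_\p$, and the boundary correction of the Fortin interpolant) that the paper's organization avoids entirely. One caution on the $\Omega_h\setminus\Omega$ piece: a bare Poincar\'e argument must be applied to $\bu$ and $\bu_h$ separately (using $\bu|_{\p\Omega}=0$ and $\bu_h|_{\p\Omega_h}=0$ on opposite sides of the thin strip, elementwise since $\bu_h$ is only piecewise $H^1$); the paper instead passes through the conforming relative $\bE_h\bu_h$ of Lemma \ref{Eh}, Corollary \ref{H2Bound}, and a H\"older/$L^6$-embedding argument to reach the $h^\ell$ rate.
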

\begin{proof}
To derive \eqref{eqn:L2velocity}, we first write
\begin{equation}\label{splitUp}
\|\bu - \bu_h\|_{L^2(\Omega_h)} \leq \|\bu - \bu_h\|_{L^2(\Omega_h \setminus \Omega)} + \|\bu - \bu_h\|_{L^2(\Omega_h \cap \Omega)}=:J_1+J_2.
\end{equation}

To bound $J_1$, we introduce $\bE_h: \bV^h_k \to \bH_0^1(\Omega_h)$ as defined in Lemma \ref{Eh}. Consequently, we may write
\begin{equation*}
    J_1 \leq \|\bu - \bE_h \bu_h\|_{L^2(\Omega_h \setminus \Omega)} + \|\bE_h \bu_h - \bu_h \|_{L^2(\Omega_h \setminus \Omega)}.
\end{equation*}
A bound of the second term in this sum follows  from Lemma \ref{Eh} and Corollary \ref{H2Bound}: 
\begin{alignat*}{1}
J_1 \leq& \|\bu - \bE_h \bu_h\|_{L^2(\Omega_h \setminus \Omega)} + \|\bE_h \bu_h - \bu_h \|_{L^2(\Omega_h)} \\
\leq&  \|\bu - \bE_h \bu_h\|_{L^2(\Omega_h \setminus \Omega)} + C h^{\ell} \|\bu_h\|_{H^{\ell-1}_h(\Omega_h)}\\
\leq& \|\bu - \bE_h \bu_h\|_{L^2(\Omega_h \setminus \Omega)} + C \bigg(h^\ell\|\bu\|_{H^\ell(\Omega_h)} + h \nu^{-1} \vert \bff - \bff_h\vert_{\bX_h^*}\bigg) .
\end{alignat*}

To bound the remaining term, begin with H\"older's inequality and recall that $H^1$ embeds in $L^6$ and $k\geq 2$. Thus we have
\begin{alignat*}{1}
\|\bu - \bE_h\bu_h\|_{L^2(\Omega_h \setminus \Omega)} \leq&  |\Omega_h \setminus \Omega|^{1/3} \|\bu-\bE_h \bu_h\|_{L^6(\Omega_h)}\\
\leq& C h^{(k+1)/3} \|\bu - \bE_h\bu_h\|_{L^6(\Omega_h)}\\
\leq& C h \|\bu - \bE_h\bu_h\|_{H^1(\Omega_h)}.
\end{alignat*}
It follows from Theorem \ref{H1}, Lemma \ref{Eh}, and Corollary \ref{H2Bound} that 
\begin{alignat*}{1}
\|\bu - \bE_h\bu_h\|_{L^2(\Omega_h \setminus \Omega)} \leq
& C h\bigg( \|\bu - \bu_h \|_{H^1_h(\Omega_h)} + \|\bu_h - \bE_h\bu_h \|_{H^1_h(\Omega_h)}\bigg) \\
\leq
& C  \bigg(h^{\ell} \|\bu \|_{H^\ell(\Omega)} + {h} \nu^{-1} \vert \bff - \bff_h\vert_{\bX_k^*}\bigg) .
\end{alignat*}
Combining this with the result above, we have
\begin{equation}\label{firstBound}
J_1\leq C\bigg( h^\ell \|\bu\|_{H^\ell(\Omega)}+ h \nu^{-1} \vert \bff - \bff_h\vert_{\bX_k^*}\bigg).
\end{equation}

To bound $J_2$, we let $\bphi\in \bL^2(\Omega\cup \Omega_h)$
such that $\bphi|_{\Omega\cap \Omega_h} = (\bu-\bu_h)|_{\Omega\cap \Omega_h}$
and $\bphi|_{\Omega\cup \Omega_h \backslash (\Omega\cap \Omega_h)}=0$.
We then define $(\bpsi, r) \in \bH^1_0(\Omega) \times L^2_0(\Omega)$ to be the solution to the auxiliary problem
\begin{equation}\label{fullPsi}
\begin{cases}
    -\nu \Delta \bpsi + \nabla r = \bphi \quad&\text{ in } \Omega, \\
    \dive{\bpsi} = 0 \quad&\text{ in } \Omega.
\end{cases}
\end{equation}
Because $\p \Omega$ is smooth and $\bphi|_{\Omega}\in \bL^2(\Omega)$,
there holds $\bpsi\in \bH^2(\Omega)$ with $\|\bpsi\|_{H^2(\Omega)}\le C \|\bphi\|_{L^2(\Omega)} = C \|\bphi\|_{L^2(\Omega\cap \Omega_h)}$
by elliptic regularity theory.
Similar to the solution $\bu$ in \eqref{spaceExtend}, we extend $\bpsi$ to $\mathbb{R}^2$ in such
a way that preserves the divergence--free condition
(cf. \cite{kato2000}) and 
\begin{equation}\label{elliptic}
\|\bpsi\|_{H^2(\mathbb{R}^2)} \leq C\|\bpsi\|_{H^2(\Omega)}\le C \|\bphi\|_{L^2(\Omega\cap \Omega_h)}.
\end{equation}

Finally, we define $\bpsi_h \in \bX^h_k$ to be the approximation of $\bpsi$ on $\Omega_h$ satisfying
\begin{equation}\label{psiH}
a_h(\bpsi_h, \bv) = \int_{\Omega_h} \bphi \cdot \bv \quad \forall \bv \in \bV^h_k.
\end{equation}
We note that $\bpsi$ and $\bpsi_h$ are analogous to $\bu$ and $\bu_h$, respectively, in Theorem \ref{H1} when $s=2$ (so that $\ell = 2$), with $\bphi$ replacing both $\bff$ and $\bff_h$. Therefore, the following estimate holds:
\begin{equation}\label{psiH1}
\|\bpsi-\bpsi_h\|_{H^1_h(\Omega_h)}\leq C h\|\bpsi\|_{H^2(\Omega)}\leq C\|\bphi\|_{L^2(\Omega\cap \Omega_h)}.
\end{equation}
Additionally, applying Corollary \ref{H2Bound} yields
\begin{alignat}{1}
\|\bpsi_h\|_{H^2_h(\Omega_h)} \leq& C\|\bpsi\|_{H^2(\Omega)} \leq C\|\bphi\|_{L^2(\Omega\cap \Omega_h)}.
\label{psiHbound}
 \end{alignat}

Next, we write
\begin{equation}
\label{breakUp}
\begin{split}
    (J_2)^2 = \|\bu - \bu_h\|_{L^2(\Omega \cap \Omega_h)}^2 =& \int_{\Omega \cap \Omega_h} \bphi \cdot (\bu - \bu_h) \\
    =& \int_{\Omega} \bphi \cdot \bu - \int_{\Omega_h} \bphi \cdot \bu_h \\
    =& a(\bu, \bpsi) - a_h(\bu_h, \bpsi_h) \\
    =& [ a(\bu,\bpsi) - a_h(\bu, \bpsi)] + a_h(\bu,\bpsi-\bpsi_h) + a_h(\bu-\bu_h, \bpsi_h).
\end{split}
\end{equation}
We now consider the terms of \eqref{breakUp} separately.

\subsubsection*{Bound of $[ a(\bu,\bpsi) - a_h(\bu, \bpsi)]$}
To bound the first terms of \eqref{breakUp}, we begin with
\begin{alignat*}{1}
\vert a(\bu,\bpsi) - a_h(\bu, \bpsi)\vert =& \nu \left| \int_{\Omega} \nabla \bu : \nabla \bpsi - \int_{\Omega_h} \nabla\bu : \nabla \bpsi \right| \\
=& \nu \left| \int_{\Omega \setminus \Omega_h} \nabla \bu : \nabla \bpsi - \int_{\Omega_h \setminus \Omega} \nabla \bu : \nabla \bpsi \right|\\
\leq& C \nu\|\nabla \bu\|_{L^2(\Omega \Delta\Omega_h)}\|\nabla \bpsi\|_{L^2(\Omega \Delta\Omega_h)}.
\end{alignat*}
The result of Lemma \ref{domainEdges} implies
\begin{align*}
    \|\nab \bu\|_{L^2(\Omega\Delta\Omega_h)}\leq Ch^{(k+1)/2}\|\bu\|_{H^2(\Omega)},\quad
     \|\nab \bpsi\|_{L^2(\Omega\Delta\Omega)}\leq Ch^{(k+1)/2}\|\bpsi\|_{H^2(\Omega)},
\end{align*}
from which we get
\begin{align}\label{amAh}
    |a(\bu,\bpsi) - a_h(\bu, \bpsi)|\leq Ch^{k+1} \nu\|\bu\|_{H^2(\Omega)}\|\bpsi\|_{H^2(\Omega)}.
\end{align}

\subsubsection*{Bound of $a_h(\bu,\bpsi-\bpsi_h) + a_h(\bu-\bu_h, \bpsi_h)$} 
It now remains to bound the last two terms in \eqref{breakUp}. To begin, we write
\begin{equation}
\begin{split}
a_h(\bu,\bpsi-\bpsi_h) + a_h(\bu-\bu_h, \bpsi_h) =& a_h(\bu - \bu_h, \bpsi - \bpsi_h) + a_h(\bu_h, \bpsi - \bpsi_h) + a_h(\bu-\bu_h, \bpsi_h) \\
\leq& \nu \|\bu-\bu_h\|_{H^1(\Omega_h)}\|\bpsi-\bpsi_h\|_{H^1(\Omega_h)} + a_h(\bu_h, \bpsi - \bpsi_h) + a_h(\bu-\bu_h, \bpsi_h) \\
\leq& C\bigg(\nu h^\ell\|\bu\|_{H^\ell(\Omega)} + h\vert \bff - \bff_h\vert_{\bX^*_h}\bigg)\|\bphi\|_{L^2(\Omega\cap \Omega_h)} \\
&+ a_h(\bu_h, \bpsi - \bpsi_h) + a_h(\bu-\bu_h, \bpsi_h) 
\end{split} \label{start1}
\end{equation}
by Theorem \ref{H1} and \eqref{psiH1}.

Recalling \eqref{strang2B}, we have
by Lemma \ref{jumpBound} (with $m=2$, and noting $\ell = \min\{s,k+1\} \le \min\{s-1,k-1\}+2=r+2$)
\begin{equation}
 \label{ibpU}
\begin{split}
 a_h(\bu-\bu_h, \bpsi_h) 
 =& -  \nu \sum_{e \in \mathcal{E}^{I}_h} \int_e \nabla \bu : [\bpsi_h] + \int_{\Omega_h} (\bff - \bff_h)\cdot \bpsi_h \\
 \le& C \nu h^\ell \|\bu\|_{H^\ell(\Omega)} \|\bpsi_h\|_{H^2_h(\Omega_h)}+ |{\bm f}-{\bm f}_h|_{X_h^*} \|\nab \bpsi_h\|_{L^2(\Omega_h)}\\
  \le& C \left(\nu h^\ell \|\bu\|_{H^\ell(\Omega)} + |{\bm f}-{\bm f}_h|_{X_h^*}\right)
  \|\bphi \|_{L^2(\Omega\cap \Omega_h)}.
\end{split}
\end{equation}
By an analogous argument, but with $s=2$ and $m=k$ in Lemma \ref{jumpBound} (so that $r=1$) , we have
\begin{equation}
\label{ibpPsi}
\begin{split}
a_h(\bu_h,\bpsi-\bpsi_h) 
=&  -\nu \sum_{e \in \mathcal{E}^{I}_h} \int_e \nabla \bpsi :[\bu_h] \\
\le &C \nu h^{k+1}\|\bpsi\|_{H^2(\Omega)} \|\bu_h\|_{H^k_h(\Omega_h)}\\
\le &C \nu h^{\ell} \|\bphi\|_{L^2(\Omega\cap \Omega_h)} \|\bu\|_{H^\ell(\Omega)}.
\end{split}
\end{equation}

Combining \eqref{start1}--\eqref{ibpPsi} yields
\begin{equation}
    a_h(\bu,\bpsi-\bpsi_h) + a_h(\bu-\bu_h, \bpsi_h)\le 
    C \left(\nu h^\ell \|\bu\|_{H^\ell(\Omega)} +  |\bff - \bff_h|_{\bX_h^*}\right) \|\bphi\|_{L^2(\Omega\cap \Omega_h)},
\end{equation}
and so applying this estimate and \eqref{amAh} to \eqref{breakUp} (recalling that $\bphi = \bu - \bu_h$ on $\Omega \cap \Omega_h$),
we obtain
\begin{align}\label{secondbound}
J_2\le C \left(\nu h^\ell \|\bu\|_{H^\ell(\Omega)} +  |\bff - \bff_h|_{\bX_h^*}\right).
\end{align}
Finally,  applying \eqref{firstBound} and \eqref{secondbound} to \eqref{splitUp}
completes the proof.
\end{proof}


\section{Numerical Experiments}\label{numerics}
We perform a series of numerical experiments to compare with the theoretical results presented in this paper. We focus on the $k=3$ case below. Numerical experiments for the $k=2$ case can be found in \cite{NeilanOtus21}.

We define our domain to be the region bounded by the ellipse 
\begin{equation*}
    \Omega = \{x\in \bbR^2:\ \frac{x_1^2}{2.25} + x_2^2 <1\},
\end{equation*}
and assign data according to the exact solution
\begin{equation}\label{testProblem}
\bu = \begin{pmatrix}
     1.5(\frac{x_1^2}{2.25} + x_2^2 - 1)(\frac{8x_1^2 y}{2.25} + \frac{x_1^2}{2.25} + 5x_2^2 - 1)\\
    \frac{-4x_1}{1.5}(\frac{x_1^2}{2.25} + x_2^2 - 1)(\frac{3x_1^2}{2.25} + x_2^2 + x_2 - 1)
\end{pmatrix}, \quad p = 10(\frac{x_1^2}{2.25} + x_2^2 - \frac{1}{2}).
\end{equation}

We take $\bff_h$ to be the cubic (nodal) Lagrange of $\bff$ and set the viscosity to $\nu = 1$ to compute the finite element method described in \eqref{eqn:FEM}. We subsequently compute the errors for decreasing mesh parameter $h$. 

\subsection{Isoparametric and affine comparison}
In Table \ref{isoVaffine}, we compare the isoparametric approximation defined through the Piola transform described in this paper with the corresponding affine approximation. Both tests were run on $\bpol^3 - \pol^2$ Scott-Vogelius elements with all edge degrees of freedom placed at the Gauss-Lobatto points. For the isoparametric approximation, we observe the optimal convergence rates predicted by the theory:
\begin{alignat*}{1}
\|\bu -\bu_h\|_{L^2(\Omega_h)} &= \mathcal{O}(h^4), \quad \|\nabla(\bu -\bu_h)\|_{L^2(\Omega_h)} = \mathcal{O}(h^3) \\
&\|p -p_h\|_{L^2(\Omega_h)} = \mathcal{O}(h^3).
\end{alignat*}
For the affine approximation, we observe suboptimal convergence.

\begin{table}[h]
\begin{tabular}{ | p{1cm} || p{2.5cm} |  p{1.5cm} ||p{2.5cm} | p{1.5cm} || p{2.5cm} | p{1.5cm} ||}
\hline
\multicolumn{7}{|c|}{Isoparametric} \\
\hline
$h$ & $\|\bu - \bu_h\|_{L^2(\Omega_h)}$ & rate & $\|\bu - \bu_h\|_{H^1(\Omega_h)}$ & rate & $\|p - p_h\|_{L^2(\Omega_h)}$ & rate \\
\hline
 0.654 & $3.391\cdot 10^{-1}$ & -- & $3.363$ & -- & $7.071$ & -- \\
\hline
 0.318 & $2.392\cdot 10^{-2}$ & 3.672 & $4.257\cdot 10^{-1}$ &2.862  & $5.234\cdot 10^{-1}$ & 3.604 \\
\hline
 0.158 & $1.675\cdot 10^{-3}$ & 3.791 & $6.232\cdot 10^{-2}$ & 2.740 & $8.845\cdot 10^{-2}$ & 2.537 \\
\hline
 0.079 & $1.139\cdot 10^{-4}$ & 3.866 & $9.046\cdot 10^{-3}$ & 2.776 & $1.298\cdot 10^{-2}$ & 2.761 \\
\hline
 0.039 & $7.183\cdot 10^{-6}$ & 3.985 & $1.225\cdot 10^{-3}$ & 2.882 & $1.695\cdot 10^{-3}$ & 2.935 \\
\hline 
\multicolumn{7}{|c|}{Affine} \\
\hline
$h$ & $\|\bu - \bu_h\|_{L^2(\Omega_h)}$ & rate & $\|\bu - \bu_h\|_{H^1(\Omega_h)}$ & rate & $\|p - p_h\|_{L^2(\Omega_h)}$ & rate \\
\hline
 0.654& $6.411\cdot 10^{-1}$   & -- & 3.705  & -- & 8.289  & -- \\
\hline
0.318 & $1.525\cdot 10^{-1}$   &  1.989  &  1.248  & 1.507    &  2.288  &  1.782  \\
\hline
 0.158 & $3.667\cdot 10^{-2}$  & 2.032   & $4.589\cdot 10^{-1}$   &  1.427  & $8.526\cdot 10^{-1}$  & 1.408  \\
\hline
0.079 & $8.779\cdot 10^{-3}$   &  2.056  & $1.635\cdot 10^{-1}$   & 1.484   & $3.005\cdot 10^{-1}$  & 1.500  \\
\hline
0.039 & $2.133\cdot 10^{-3}$   & 2.040   &  $5.741\cdot 10^{-2}$  & 1.509  & $1.056\cdot 10^{-1}$  & 1.508   \\
\hline   
\end{tabular}
\caption{Errors and rates for the Isoparametric approximations with Gauss-Lobatto nodes compared to the affine approximation.} 
\label{isoVaffine}
\end{table}

\subsection{Dependence on degrees of freedom}
In Remark \ref{remEq}, we note 
the error estimate may lose up to  $k-1$ powers of $h$ if equidistant nodes are used in places of Gauss-Lobatto nodes. To test this, we compute the errors for the isoparametric approximation with the standard, equidistant placement of degrees of freedom in order to test whether Gauss-Lobatto points are necessary or simply a tool for the
analysis. We compare these results, shown in Table \ref{equiPoints}, with those in Table \ref{isoVaffine}, and we see that the isoparametric approximation with equidistant points is indeed suboptimal. 

\begin{table}[h]
\begin{tabular}{ | p{1cm} || p{2.5cm} |  p{1.5cm} ||p{2.5cm} | p{1.5cm} || p{2.5cm} | p{1.5cm} ||}
\hline
\multicolumn{7}{|c|}{Isoparametric with Equidistant Degrees of Freedom} \\
\hline
$h$ & $\|\bu - \bu_h\|_{L^2(\Omega_h)}$ & rate & $\|\bu - \bu_h\|_{H^1(\Omega_h)}$ & rate & $\|p - p_h\|_{L^2(\Omega_h)}$ & rate \\
\hline
 0.654 & $3.393\cdot 10^{-1}$  & -- & 3.298  & -- & 6.906  & -- \\
\hline
 0.318 & $2.307\cdot 10^{-2}$  & 3.723  & $3.998\cdot 10^{-1}$  & 2.922   & $4.813\cdot 10^{-1}$ &  3.689     \\
\hline
 0.158 & $1.657\cdot 10^{-3}$   & 3.755  & $5.538\cdot 10^{-2}$  & 2.819  & $8.089 \cdot 10^{-2}$  &  2.543   \\
\hline
 0.079 & $1.212\cdot 10^{-4}$ & 3.762  & $7.938\cdot 10^{-2}$  & 2.794  & $1.246 \cdot 10^{-2}$  &  2.691   \\
\hline
0.039 & $1.791\cdot 10^{-5}$   & 2.756  & $2.043\cdot 10^{-3}$  & 1.957  & $2.946 \cdot 10^{-3}$  & 2.079  \\
\hline 
\end{tabular}

\caption{Errors and rates for the Isoparametric approximation with degrees of freedom placed at standard, equidistant points.} \label{equiPoints}
\end{table}

\subsection{Divergence errors and pressure robustness}
We also compare the maximum divergence values computed using isoparametric approximation presented in this paper with those computed with the standard isoparametric approach. The degrees of freedom for both approximations are taken at the Gauss-Lobatto points so that the only difference is the use of the Piola transform in the velocity space. As shown in Figure \ref{plotDiv}, the method described in this paper is divergence free, whereas the standard isoparametric method ($u_h^{standard}$) is not.

\begin{figure}[h]
\includegraphics[width=0.75\textwidth, trim={0 7cm 0 6cm},clip]{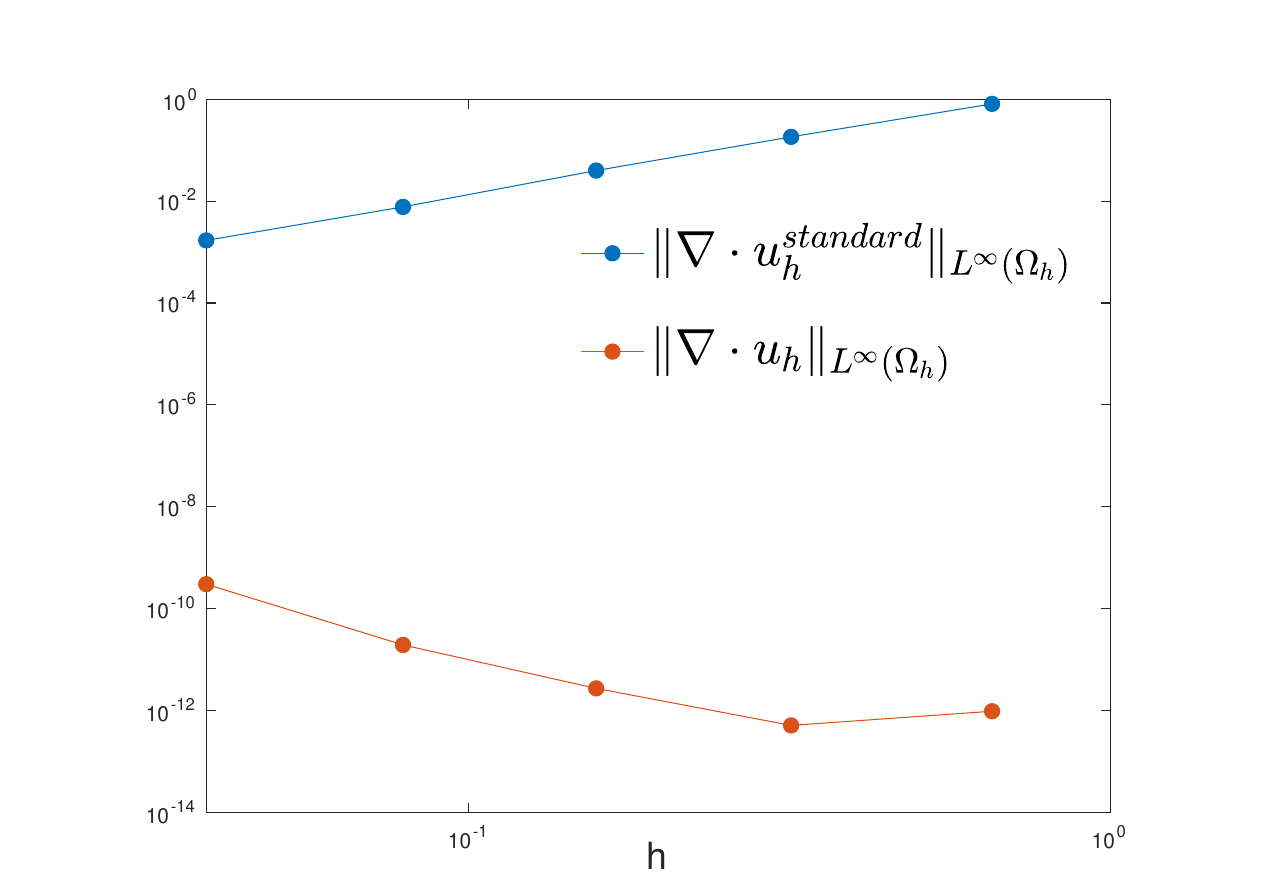}
\caption{Divergence of the isoparametric method with Piola transform compared to the standard isoparametric method on Scott-Vogelius $\bpol^3 - \pol^2$ elements.}\label{plotDiv}
\end{figure}

\begin{table}[h]
\begin{tabular}{ | p{1cm} || p{2.5cm} |  p{2.5cm} | }
\hline 
$\nu$ & $\|\bu - \bu_h \|_{L^2(\Omega_h)}$ & $\|\bu - \bu_h \|_{H^1(\Omega_h)}$ \\
\hline 
$10^{-7}$ & $1.139\cdot 10^{-4}$ & $9.046\cdot 10^{-3}$ \\
\hline 
$10^{-6}$ & $1.139\cdot 10^{-4}$ & $9.046\cdot 10^{-3}$ \\
\hline 
$10^{-3}$ & $1.139\cdot 10^{-4}$ & $9.046\cdot 10^{-3}$ \\
\hline 
1 & $1.139\cdot 10^{-4}$ & $9.046\cdot 10^{-3}$\\ 
\hline
\end{tabular}
\caption{Error tests for $h=0.079$ with varying values of viscosity $\nu$.}\label{robust}
\end{table}
Finally, we check the behavior of the method for varying values of viscosity. We run the method on $\bpol^3-\pol^2$ elements for data given by \eqref{testProblem}. In Table \ref{robust}, we show the behavior of the error in the velocity as we vary viscosity $\nu$. As we can see, the error remains nearly unchanged as we vary values of $\nu$ over several orders of magnitude, indicating that the scheme is pressure robust.

\bibliographystyle{siam}
\bibliography{ref}

\begin{thebibliography}{10}

\bibitem{ArnoldQin92}
{\sc D.~N. Arnold and J.~Qin}, {\em Quadratic velocity/linear pressure stokes
  elements}, in Advances in Computer Methods for Partial Differential
  Equations-VII, R.~Vichnevetsky, D.~Knight, and G.~Richter, eds., IMACS, 1992,
  pp.~28--34.

\bibitem{bernardi89}
{\sc C.~Bernardi}, {\em Optimal finite-element interpolation on curved
  domains}, SIAM J. Numer. Anal., 26 (1989), pp.~1212--1240.

\bibitem{BrennerNeilanSung13}
{\sc S.~C. Brenner, M.~Neilan, and L.-Y. Sung}, {\em Isoparametric {$C^0$}
  interior penalty methods for plate bending problems on smooth domains},
  Calcolo, 50 (2013), pp.~35--67.

\bibitem{brenner2008mathematical}
{\sc S.~C. Brenner and L.~R. Scott}, {\em The mathematical theory of finite
  element methods}, vol.~3, Springer, 2008.

\bibitem{CiarletRaviart72}
{\sc P.~G. Ciarlet and P.-A. Raviart}, {\em Interpolation theory over curved
  elements, with applications to finite element methods}, Comput. Methods Appl.
  Mech. Engrg., 1 (1972), pp.~217--249.

\bibitem{cockburn2007note}
{\sc B.~Cockburn, G.~Kanschat, and D.~Sch{\"o}tzau}, {\em A note on
  discontinuous galerkin divergence-free solutions of the navier--stokes
  equations}, Journal of Scientific Computing, 31 (2007), pp.~61--73.

\bibitem{ElliottRanner13}
{\sc C.~M. Elliott and T.~Ranner}, {\em Finite element analysis for a coupled
  bulk-surface partial differential equation}, IMA J. Numer. Anal., 33 (2013),
  pp.~377--402.

\bibitem{falk2013stokes}
{\sc R.~S. Falk and M.~Neilan}, {\em Stokes complexes and the construction of
  stable finite elements with pointwise mass conservation}, SIAM Journal on
  Numerical Analysis, 51 (2013), pp.~1308--1326.

\bibitem{guzman2018inf}
{\sc J.~Guzm{\'a}n and M.~Neilan}, {\em Inf-sup stable finite elements on
  barycentric refinements producing divergence--free approximations in
  arbitrary dimensions}, SIAM Journal on Numerical Analysis, 56 (2018),
  pp.~2826--2844.

\bibitem{john2017divergence}
{\sc V.~John, A.~Linke, C.~Merdon, M.~Neilan, and L.~G. Rebholz}, {\em On the
  divergence constraint in mixed finite element methods for incompressible
  flows}, SIAM review, 59 (2017), pp.~492--544.

\bibitem{kato2000}
{\sc T.~Kato, M.~Mitrea, G.~Ponce, and M.~Taylor}, {\em Extension and
  representation of divergence-free vector fields on bounded domains}, Math.
  Res. Lett., 7 (2000), pp.~643--650.

\bibitem{Lenoir86}
{\sc M.~Lenoir}, {\em Optimal isoparametric finite elements and error estimates
  for domains involving curved boundaries}, SIAM J. Numer. Anal., 23 (1986),
  pp.~562--580.

\bibitem{LiuNeilanMaxim23}
{\sc H.~Liu, M.~Neilan, and M.~Olshanskii}, {\em A {C}ut{FEM} divergence-free
  discretization for the {S}tokes problem}, ESAIM Math. Model. Numer. Anal., 57
  (2023), pp.~143--165.

\bibitem{LiuNeilanOtus}
{\sc H.~Liu, M.~Neilan, and M.~B. Otus}, {\em A divergence-free finite element
  method for the {S}tokes problem with boundary correction}, J. Numer. Math.,
  31 (2023), pp.~105--123.

\bibitem{NeilanOtus21}
{\sc M.~Neilan and B.~Otus}, {\em Divergence-free {S}cott-{V}ogelius elements
  on curved domains}, SIAM J. Numer. Anal., 59 (2021), pp.~1090--1116.

\bibitem{scott1973finite}
{\sc L.~R. Scott}, {\em Finite element techniques for curved boundaries}, PhD
  thesis, Massachusetts Institute of Technology, 1973.

\bibitem{scott1985norm}
{\sc L.~R. Scott and M.~Vogelius}, {\em Norm estimates for a maximal right
  inverse of the divergence operator in spaces of piecewise polynomials},
  ESAIM: Mathematical Modelling and Numerical Analysis, 19 (1985),
  pp.~111--143.

\end{thebibliography}

\appendix
\section{Proof of Lemma \ref{lem:InvI}}\label{Ap:ProofInI}
\begin{proof}
Write $\bv(x) = A_T \hat \bv (\hat{x})$ 
for some $\hat \bv\in \hat \bV_k$.
We then use Lemma \ref{defAT}, \eqref{eqn:Bernardi},
and equivalence of norms
to obtain
\begin{equation}
\label{eqn:InvProof1}
\begin{split}
 \|\bv\|_{W^{\ell,p}(K)}
 &\le C h_T^{2/p-\ell} \|A_T \hat \bv\|_{W^{\ell,p}(\hat K)}\\
 &\le C h_T^{2/p-\ell} \|A_T\|_{W^{j,\infty}(\hat K)} \|\hat \bv\|_{W^{\ell,p}(\hat K)}\\
 &\le C h_T^{2/p-\ell-1}  \|\hat \bv\|_{W^{\ell,p}(\hat K)}\le 
 C h_T^{2/p-\ell-1}  \|\hat \bv\|_{L^{q}(\hat K)}.
\end{split}
\end{equation}
Likewise, we have
\begin{equation}
\label{eqn:InvProof2}
    \begin{split}
        \|\hat \bv\|_{L^q(\hat K)}\le \|A_T^{-1}\|_{L^\infty(\hat K)} \|A_T \hat \bv\|_{L^q(\hat K)}
        \le C h_T^{1-2/q} \|\bv\|_{L^q(K)}.
    \end{split}
\end{equation}
Combining \eqref{eqn:InvProof1}--\eqref{eqn:InvProof2}
yields \eqref{eqn:InvI1} for the case $m=0$.
The estimate \eqref{eqn:InvI1} for general $m$
then follows by standard arguments 
(cf.~\cite[Lemma 4.5.3]{brenner2008mathematical}).

 To prove \eqref{eqn:InvI2}, we first use \eqref{eqn:Bernardi}:
\begin{align*}
    |\bv|_{W^{\ell,p}(K)}
    &\le C  \Big[\underbrace{h_T^{2/p+\ell} \sum_{r=0}^k h_T^{-2r} |A_T \hat \bv|_{W^{r,p}(\hat K)}}_{=:I}    
    +\underbrace{h_T^{2/p+\ell} \sum_{r=k+1}^\ell h_T^{-2r} |A_T \hat \bv|_{W^{r,p}(\hat K)}}_{=:II}\Big].
\end{align*}
To bound $I$, we use \eqref{eqn:Bernardi} once again to obtain
\begin{align*}
    I\le h_T^{2/p+\ell}\sum_{r=0}^k h_T^{-2r} \cdot h_T^{r-2/p} \|\bv\|_{W^{r,p}(K)}\le C h_T^{\ell-k}\|\bv\|_{W^{k,p}(K)}.
\end{align*}
For $II$, we use the fact that $\hat \bv$ is a polynomial of degree $\le k$ on $K$ to obtain
\begin{align*}
    |A_T \hat \bv|_{W^{r,p}(\hat K)}
    &\le C \sum_{j=0}^k |A_T|_{W^{r-j,\infty}(\hat K)} |\hat \bv|_{W^{j,p}(\hat K)}\\
    &\le C\sum_{j=0}^k h_T^{r-j-1} |A_T^{-1} A_T \hat \bv|_{W^{j,p}(\hat K)}\\
    &\le C\sum_{j=0}^k \sum_{i=0}^j h_T^{r-j-1} |A_T^{-1}|_{W^{j-i,\infty}(\hat K)} |A_T \hat \bv|_{W^{i,p}(\hat K)}\\
    &\le C\sum_{j=0}^k \sum_{i=0}^j h_T^{r-i}  |A_T \hat \bv|_{W^{i,p}(\hat K)}\\
    &\le C\sum_{j=0}^k \sum_{i=0}^j h_T^{r-i}\cdot h_T^{i-2/p}  \|\bv\|_{W^{i,p}(K)}\\
    &\le C h_T^{r-2/p} \|\bv\|_{W^{k,p}(K)}.
\end{align*}
Thus,
\begin{align*}
    II \le C h_T^{2/p+\ell} \sum_{r=k+1}^\ell h_T^{-r-2/p} \|\bv\|_{W^{k,p}(K)}\le C \|\bv\|_{W^{k,p}(K)}.
    \end{align*}

Combining the bounds for $I$ and $II$ completes the proof of \eqref{eqn:InvI2}.
\end{proof}

\section{Proof of Lemma \ref{Eh}}\label{Ap:ProofEh}
\begin{proof}
    Define $\bE_h : \bV^h \to \bH_0^1(\Omega_h)$
    such that, for $\bv \in \bV^h$, 
    \begin{equation*}
        \bE_h \bv|_{T} = (\tilde{\bv} \circ F_{\tilde{T}}\circ F_{T}^{-1})|_{T},
    \end{equation*}
    where $\tilde{\bv}$ is the function in $\tilde{\bV}$ uniquely defined by 
    \begin{equation*}
        \bv|_T(a) = \tilde{\bv}|_{\tilde{T}}(\tilde{a}) \quad \forall{a} \in \mathcal{N}_T, \quad \forall T \in \mathcal{T}_h,
    \end{equation*}
    where $T= G_h(\tilde{T})$. In other words, in a \textit{standard} isoparametric, $k$th degree Lagrange 
    finite element method, $\bE_h \bv$ would be the function on the isoparametric element 
    associated with $\tilde{\bv}$ on $\tilde{T}$. Thus, $\bE_h\bv \in \bH_0^1(\Omega_h)$. 

    As shown in \cite{NeilanOtus21}, $\tilde{\bv} = \bE_h \bv$ on affine triangles, and we may conclude 
    \begin{equation*}
        \bE_h\bv|_T(a) = \bv|_T(a) \quad \forall{a} \in \mathcal{N}_T, \quad \forall T \in \mathcal{T}_h.
    \end{equation*}

    Our goal is to estimate $\bv - \bE_h \bv$, and our proof follows closely with the proof of Lemma 4.5 in \cite{NeilanOtus21}. However, here we provide a more general result. 
    
    As $\bv = \bE_h \bv$ on affine triangles, we only consider $T \in \mathcal{T}_h$ with curved boundaries. Additionally, we know $\bv |_{\partial T \cap \partial \Omega_h} = 0$. We may write $\bv|_T (x) = A_T (\hat{x})\hat{\bv}(\hat{x})$, for some $\hat{\bv}\in \hat{\bV}$, where $A_T = DF_T/\det{(DF_T)}$. Furthermore, there exists $\hat{\bw} \in \hat{\bV}$ such that $\hat{\bw}(\hat{x}) = \bE_h \bv|_{T}(x)$. Consequently,
    \begin{equation*}
        A_T(\hat{a})\hat{\bv}(\hat{a}) = \hat{\bw}(\hat{a}) \quad \forall \hat{a} \in \mathcal{N}_{\hat{T}},
    \end{equation*}
    so $\hat{\bw}$ is the piecewise $k$th degree Lagrange interpolant of $A_T\hat{\bv}$ on $\hat{T}^{CT}$.

    By the Bramble-Hilbert lemma, we have 
    \begin{equation}\label{eqn:BHL}
        \|A_T \hat{\bv} - \hat{\bw}\|_{H^i(\hat{K})} \leq C \vert A_T \hat{\bv}\vert_{H^{k+1}(\hat{K})} \quad \forall \hat{K} \in \hat{T}^{CT}, \quad i = 0,1,\ldots,k.
    \end{equation}

    We may then bound the right-hand side using Lemma \ref{defAT} and recognizing that $\hat{\bv}$ is a polynomial of degree $k$. Thus we have
\begin{equation}
    \label{startBound}
\begin{split}
     \vert A_T \hat{\bv}\vert_{H^{k+1}(\hat{K})} 
     \leq& 
     C \sum_{j=0}^{k+1}  \vert A_T\vert_{W^{k+1-j}(\hat K)} \vert \hat \bv \vert_{H^j(\hat K)} =  C \sum_{j=0}^{k}  \vert A_T\vert_{W^{k+1-j}(\hat K)} \vert \hat \bv \vert_{H^j(\hat K)}\\
     \le & C \sum_{j=0}^{k}  h_T^{k-j} \vert \hat \bv \vert_{H^j(\hat K)}.
     %
    \end{split}
    \end{equation}

Using Lemmas \ref{defAT} and \ref{scalingBernardi}, we have
\begin{equation}\label{second2Bound}
\begin{split}
    \vert \hat \bv \vert_{H^j(\hat K)} 
    &=     \vert  A_T^{-1} A_T \hat \bv \vert_{H^j(\hat K)}
    \le C \sum_{\ell=0}^j |A_T^{-1}|_{W^{j-\ell}(\hat K)} |A_T \hat \bv|_{H^\ell(\hat K)}\\
    &\le C \sum_{\ell=0}^j h_T^{1+j-\ell} |A_T \hat \bv|_{H^\ell(\hat K)}\\
    &\le C\sum_{\ell=0}^j h_T^{1+j-\ell} h_T^{\ell-1}\|\bv\|_{H^\ell( K)}\le C h_T^j \|\bv\|_{H^j(K)}.
\end{split}
\end{equation}
Inserting this estimate into \eqref{startBound} yields
\[
|A_T \hat \bv|_{H^{k+1}(\hat K)}\le C h_T^k \|\bv\|_{H^k(K)},
\]
and therefore by \eqref{eqn:BHL} and Lemma \ref{scalingBernardi},
\begin{align}\label{eqn:Ehk}
    |\bv-\bE_h \bv|_{H^i(K)}\le C h_T^{1-i} \|A_T \hat \bv-\hat \bw\|_{H^i(\hat K)}\le C h_T^{1-i} |A_T \hat \bv|_{H^{k+1}(\hat K)}\le C h_T^{k+1-i}\|\bv\|_{H^k(K)}.
\end{align}
An applicaiton of the inverse inequality \eqref{eqn:InvI1} then yields the desired
estimate \eqref{EhBound}. 
\end{proof}

\end{document}